\providecommand{\U}[1]{\protect\rule{.1in}{.1in}}
\newtheorem{theorem}{Theorem}[section]
\theoremstyle{plain}
\newtheorem{corollary}{Corollary}[section]
\newtheorem{lemma}{Lemma}[section]
\newtheorem{proposition}{Proposition}[section]
\newtheorem{remark}{Remark}[section]
\numberwithin{equation}{section}
\begin{document}
\title[Elliptic equations with critical growth in $\mathbb{R}^{N}$]{Existence and multiplicity of solutions to equations of $N-$Laplacian type
with critical exponential growth in $\mathbb{R}^{N}$}
\author{Nguyen Lam}
\author{Guozhen Lu}
\address{Nguyen Lam and Guozhen Lu\\
Department of Mathematics\\
Wayne State University\\
Detroit, MI 48202, USA\\
Emails: nguyenlam@wayne.edu and gzlu@math.wayne.edu}
\thanks{Corresponding Author: G. Lu at gzlu@math.wayne.edu}
\thanks{Research is partly supported by a US NSF grant DMS0901761.}
\date{}
\keywords{Ekeland variational principle, Mountain-pass Theorem, Variational methods,
Critical growth, Moser-Trudinger inequality, $N-$Laplacian,
Ambrosetti-Rabinowitz condition.\ \ }

\begin{abstract}
In this paper, we deal with the existence of solutions to the nonuniformly
elliptic equation of the form
\begin{equation}
-\operatorname{div}\left(  a\left(  x,\nabla u\right)  \right)
+V(x)\left\vert u\right\vert ^{N-2}u=\frac{f(x,u)}{\left\vert x\right\vert
^{\beta}}+\varepsilon h(x) \label{0.1}%
\end{equation}
in $\mathbb{R}^{N}$ where $0\leq\beta<N$, $V:\mathbb{R}^{N}\rightarrow
\mathbb{R}$ is a continuous potential satisfying $V(x)\ge V_0>0$ in $\mathbb{R}^N$ and $V^{-1}\in L^1(\mathbb{R}^N)$ or $|\{x\in \mathbb{R}^N: V(x)\le M\}|<\infty$ for every $M>0$, $f:\mathbb{R}^{N}\times\mathbb{R}%
\rightarrow\mathbb{R}$ behaves like $\exp\left(  \alpha\left\vert
u\right\vert ^{N/(N-1)}\right)  $ when $\left\vert u\right\vert
\rightarrow\infty$ and satisfies the Ambrosetti-Rabinowitz
condition, $h\in\left(  W^{1,N}\left( \mathbb{R}^{N}\right)  \right)
^{\ast},~h\neq0$ and $\varepsilon$ is a positive parameter.
 In
particular, in the case of $N-$Laplacian, i.e,
\begin{equation}
-\Delta_{N}u+V(x)\left\vert u\right\vert ^{N-2}u=\frac{f(x,u)}{\left\vert
x\right\vert ^{\beta}}+\varepsilon h(x) \label{0.2}%
\end{equation}
using the minimization and the Ekeland variational principle, we obtain
multiplicity of weak solutions of (\ref{0.2}).

Moreover, we prove that it is not necessary to have the small nonzero
perturbation $\varepsilon h(x)$ to get the nontriviality of the solution to
the $N-$Laplacian equation
\begin{equation}
-\Delta_{N}u+V(x)\left\vert u\right\vert ^{N-2}u=\frac{f(x,u)}{\left\vert
x\right\vert ^{\beta}}%
\end{equation}

Finally, we will prove the above results when our nonlinearity $f$
doesn't satisfy the well-known Ambrosetti-Rabinowitz condition and
thus derive the existence and multiplicity of solutions for a  wider
class of nonlinear terms $f$.

\end{abstract}
\maketitle

\section{Introduction}

In this paper,  we consider the existence and multiplicity of nontrivial weak
solution $u\in W^{1, N}(\mathbb{R}^N)$ ($u\ge 0$) for the nonuniformly elliptic equations of $N-$Laplacian type of the
form:%
\begin{equation}
-\operatorname{div}\left(  a\left(  x,\nabla u\right)  \right)
+V(x)\left\vert u\right\vert ^{N-2}u=\frac{f(x,u)}{\left\vert x\right\vert
^{\beta}}+\varepsilon h(x) \,\,\, in \,\, \mathbb{R}^N \label{1.1}%
\end{equation}
where, in addition to some more assumptions on $a(x, \tau)$ and $f$ which will be specified later in Section 2, we have
\[
\left\vert a\left(  x,\tau\right)  \right\vert \leq c_{0}\left(  h_{0}\left(
x\right)  +h_{1}\left(  x\right)  \left\vert \tau\right\vert ^{N-1}\right)
\]
for any $\tau$ in $\mathbb{R}^{N}$ and a.e. $x$ in $\mathbb{R}^{N}$, $h_{0}\in L^{N/(N-1)}\left(
\mathbb{R}^{N}\right)$ and $h_{1}\in L_{loc}^{\infty}\left(  \mathbb{R}%
^{N}\right)  $ and
$f$ satisfies critical growth of exponential type such as $f:\mathbb{R}^{N}\times\mathbb{R}%
\rightarrow\mathbb{R}$ behaves like $\exp\left(  \alpha\left\vert u\right\vert
^{N/(N-1)}\right)  $ when $\left\vert u\right\vert \rightarrow\infty$ and when $f$ either satisfies or does not satisfy the Ambrosetti-Rabinowitz condition.

 A special case of our
equation in the whole Euclidean space when $a\left(  x,\nabla u\right)  =\left\vert \nabla u\right\vert
^{N-2}\nabla u$ has been studied extensively, both in the case $N=2$ (the prototype equation is the Laplacian in $\mathbb{R}^2$) and in the case $N>3$ in
 $\mathbb{R}^N$ for the $N-$Laplacain,  see  for example \cite{C}, \cite{A},  \cite{AS}, \cite{P}, \cite{FMR}, \cite{Do1, Do, DoMS}, \cite{AY}, etc.
We should mention that problems involving Laplacian in bounded domains in $\mathbb{R}^2$ with critical exponential growth have been studied in \cite{AY1}, \cite{FMR}, \cite{AP1}, \cite{AP2}, \cite{CC}, \cite{S}, etc. and for $N-$Laplacian in bounded domains in  $\mathbb{R}^N$  ($N>2$)   by the authors of \cite{A}, \cite{Do1}, \cite{P}.

The problems of this type are important in many fields of sciences, notably the
fields of electromagnetism, astronomy, and fluid dynamics, because they can be
used to accurately describe the behavior of electric, gravitational, and fluid
potentials. They have been extensively studied by many authors in many
different cases: bounded domains and unbounded domains, different behaves of
the nonlinearity, different types of boundary conditions, etc. In particular,
many works focus on the subcritical and critical growth of the nonlinearity
which allows to treat the problem variationally using general critical point
theory.

In the case $p<N$, by the Sobolev embedding, the subcritical and
critical growth for the $p-$Laplacian mean that the nonlinearity $f$
cannot exceed the polynomial of degree $p^{\ast}=\frac{Np}{N-p}$.
The case $p=N$ is special, since the corresponding Sobolev space $W_{0}%
^{1,N}\left(  \Omega\right)  $ is a borderline case for Sobolev embeddings:
one has $W_{0}^{1,N}\left(  \Omega\right)  \subset L^{q}\left(  \Omega\right)
$ for all $q\geq1$, but $W_{0}^{1,N}\left(  \Omega\right)  \nsubseteq
L^{\infty}\left(  \Omega\right)  $. So, one is led to ask if there is another
kind of \textit{maximal growth} in this situation. Indeed, this is the result
of Pohozaev \cite{Po}, Trudinger \cite{T} and Moser \cite{M}, and is by now
called the Moser-Trudinger inequality: it says that if $\Omega\subset%
\mathbb{R}
^{N}$ is a bounded domain, then
\[
\underset{u\in W_{0}^{1,N}\left(  \Omega\right)  ,~\left\Vert \nabla
u\right\Vert _{L^{N}}\leq1}{\sup}\frac{1}{|\Omega|}\int_{\Omega}e^{\alpha_{N}
|u|^{\frac{N}{N-1}}}dx <\infty
\]
where $\alpha_{N}=Nw_{N-1}^{\frac{1}{N-1}}$ and $w_{N-1}$ is the surface area
of the unit sphere in $\mathbb{R}^{N}$. Moreover, the constant $\alpha_{N}$ is
sharp in the sense that if we replace $\alpha_{N}$ by some $\beta>\alpha_{N}$,
the above supremum is infinite.

This well-known Moser-Trudinger inequality has been generalized in many ways.
For instance, in the case of bounded domains, Adimurthi and Sandeep proved in
\cite{AS} that the following inequality
\[
\underset{u\in W_{0}^{1,N}\left(  \Omega\right)  ,~\left\Vert \nabla
u\right\Vert _{L^{N}}\leq1}{\sup}\int_{\Omega}\frac{e^{\alpha_{N}
|u|^{\frac{N}{N-1}}}}{|x|^{\beta}}dx <\infty
\]
holds if and only if $\frac{\alpha}{\alpha_{N}}+\frac{\beta}{N}\le1$ where
$\alpha>0$ and $0\le\beta<N$.

On the other hand, in the case of unbounded domains,
B. Ruf when $N=2$ in
\cite{R} and Y. X. Li and B. Ruf when $N>2$ in \cite{LR} proved that if we
replace the $L^{N}$-norm of $\nabla u$ in the supremum by the standard Sobolev
norm, then this supremum can still be finite under a certain condition for
$\alpha$. More precisely, they have proved the following:
\[
\underset{u\in W_{0}^{1,N}\left(
\mathbb{R}
^{N}\right)  ,~\left\Vert u\right\Vert _{L^{N}}^{N}+\left\Vert \nabla
u\right\Vert _{L^{N}}^{N}\leq1}{\sup}\int_{%
\mathbb{R}
^{N}}\left(  \exp\left(  \alpha\left\vert u\right\vert ^{N/(N-1)}\right)
-S_{N-2}\left(  \alpha,u\right)  \right)  dx\left\{
\begin{array}
[c]{c}%
\leq \infty\text{ if }\alpha\leq\alpha_{N},\\
=+\infty\text{ if }\alpha>\alpha_{N}%
\end{array}
\right.  ,
\]
where
\[
S_{N-2}\left(  \alpha,u\right)  =\sum\limits_{k=0}^{N-2}\frac{\alpha
^{k}\left\vert u\right\vert ^{kN/(N-1)}}{k!}.
\]

We should mention that for $\alpha<\alpha_{N}$ when $N=2$, the above
inequality was first proved by D. Cao in \cite{C}, and proved  for
$N>2$ by Panda \cite{P} and J.M. do O \cite{Do1, Do} and Adachi and Tanaka \cite{AT}.

Recently, Adimurthi and Yang generalized the above result of Li and Ruf
\cite{LR} to get the following version of the singular Trudinger-Moser
inequality (see \cite{AY}):

\begin{lemma}
For all $0\leq\beta<N,~0<\alpha$ and $u\in W^{1,N}\left(  \mathbb{R}%
^{N}\right)  $, there holds%
\[
\int_{\mathbb{R}^{N}}\frac{1}{\left\vert x\right\vert ^{\beta}}\left\{
\exp\left(  \alpha\left\vert u\right\vert ^{N/(N-1)}\right)  -S_{N-2}\left(
\alpha,u\right)  \right\}  <\infty
\]
Furthermore, we have for all $\alpha\leq\left(  1-\frac{\beta}{N}\right)
\alpha_{N}$ and $\tau>0$,
\[
\underset{\left\Vert u\right\Vert _{1,\tau}\leq1}{\sup}\int_{\mathbb{R}^{N}%
}\frac{1}{\left\vert x\right\vert ^{\beta}}\left\{  \exp\left(  \alpha
\left\vert u\right\vert ^{N/(N-1)}\right)  -S_{N-2}\left(  \alpha,u\right)
\right\}  <\infty
\]
where $\left\Vert u\right\Vert _{1,\tau}=\left(  \int_{\mathbb{R}^{N}}\left(
\left\vert \nabla u\right\vert ^{N}+\tau\left\vert u\right\vert ^{N}\right)
dx\right)  ^{1/N}$. The inequality is sharp: for any $\alpha>\left(
1-\frac{\beta}{N}\right)  \alpha_{N}$, the supremum is infinity.
\end{lemma}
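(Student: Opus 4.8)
The plan is to prove the two assertions separately: the pointwise finiteness by localization, and the sharp uniform bound by Schwarz symmetrization followed by Moser's one-dimensional reduction, with the Adimurthi--Sandeep inequality on the unit ball (quoted above) as the basic engine. First I would rewrite the integrand: expanding, $\exp(\alpha|u|^{N/(N-1)})-S_{N-2}(\alpha,u)=\sum_{k\ge N-1}\frac{\alpha^{k}}{k!}|u|^{kN/(N-1)}$, so this quantity is $\le C(N,\alpha)|u|^{N}$ when $|u|\le 1$ and is $\le\exp(\alpha|u|^{N/(N-1)})$ always; I will also use the convexity inequality $(a+b)^{N/(N-1)}\le\theta^{-1/(N-1)}a^{N/(N-1)}+(1-\theta)^{-1/(N-1)}b^{N/(N-1)}$ for $\theta\in(0,1)$, $a,b\ge 0$, which for $\theta$ near $1$ reads $(1+\lambda)a^{N/(N-1)}+C_{\lambda}b^{N/(N-1)}$ with $C_{\lambda}\sim\lambda^{-1/(N-1)}$ as $\lambda\to0$. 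For the pointwise finiteness (all $\alpha>0$, all $u\in W^{1,N}(\mathbb{R}^{N})$) I split $\mathbb{R}^{N}$ into $\{|u|\le K\}$ and $\{|u|>K\}$: on the former the integrand is $\le C_{K}|u|^{N}$, and $\int_{\mathbb{R}^{N}}|u|^{N}|x|^{-\beta}\,dx<\infty$ (on $B_{1}$ by H\"older with an exponent $p'$ satisfying $p'\beta<N$, possible since $\beta<N$, together with $W^{1,N}(B_{1})\hookrightarrow L^{q}(B_{1})$ for every finite $q$; on $B_{1}^{c}$ since $|x|^{-\beta}\le 1$ and $u\in L^{N}$); on the latter, of measure $\le K^{-N}\|u\|_{N}^{N}$, I bound the integrand by $\exp(\alpha|u|^{N/(N-1)})$, cut off near the origin so that $\varphi u\in W_{0}^{1,N}(B_{2})$ agrees with $u$ on $B_{1}$, approximate $\varphi u$ in $W_{0}^{1,N}(B_{2})$ by a smooth compactly supported function, split off the bounded smooth part with the convexity inequality, and apply Adimurthi--Sandeep on $B_{2}$ with exponential constant pushed strictly below $(1-\beta/N)\alpha_{N}$ after rescaling to unit Dirichlet norm; the part on $B_{1}^{c}\cap\{|u|>K\}$ has finite measure and $|x|^{-\beta}\le 1$, so the same device (or the quoted Li--Ruf inequality) finishes it.

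For the uniform bound I would first replace $u$ by the Schwarz symmetrization $u^{\ast}$ of $|u|$: then $\|u^{\ast}\|_{N}=\|u\|_{N}$ and $\|\nabla u^{\ast}\|_{N}\le\|\nabla u\|_{N}$ by P\'olya--Szeg\H o, hence $\|u^{\ast}\|_{1,\tau}\le\|u\|_{1,\tau}$; since $|x|^{-\beta}$ is radially decreasing and $t\mapsto\exp(\alpha t^{N/(N-1)})-S_{N-2}(\alpha,t)$ is nondecreasing and vanishes at $0$, Hardy--Littlewood gives $\int F(|u|)|x|^{-\beta}\le\int F(u^{\ast})|x|^{-\beta}$, so it suffices to take $u$ nonnegative, radial, nonincreasing with $\|u\|_{1,\tau}\le 1$. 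Then I split the integral at $|x|=1$. On $B_{1}^{c}$, where $|x|^{-\beta}\le 1$, the part over $\{u\le 1\}$ is $\le C\|u\|_{N}^{N}\le C/\tau$, while $\{u>1\}$ is a ball $B_{r_{0}}$ with $|B_{r_{0}}|\le\|u\|_{N}^{N}\le 1/\tau$ and $u(1)\le C_{\tau}$, contributing $\le\exp(\alpha C_{\tau}^{N/(N-1)})|B_{r_{0}}|$; all bounds here may depend on $\tau$, which is allowed. On $B_{1}$ I apply Moser's change of variables $r=e^{-t/N}$, $\psi(t)=u(e^{-t/N})$, $t\in(0,\infty)$, under which $\|\nabla u\|_{L^{N}(B_{1})}^{N}=\alpha_{N}^{N-1}\int_{0}^{\infty}|\psi'(t)|^{N}\,dt$ (using $\alpha_{N}^{N-1}=N^{N-1}w_{N-1}$) and $\int_{B_{1}}F(u)|x|^{-\beta}\,dx=\frac{w_{N-1}}{N}\int_{0}^{\infty}F(\psi(t))\,e^{-(1-\beta/N)t}\,dt$; write $a_{1}=\|\nabla u\|_{L^{N}(B_{1})}^{N}\le 1$ and $u_{0}=u(1)=\psi(0)$.

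The decisive case is the concentration regime $a_{1}\to 1$; away from it there is slack to spare. Here the boundary value $u_{0}$ is controlled by the residual energy: $\|u\|_{1,\tau}\le 1$ gives $1-a_{1}\ge\|\nabla u\|_{L^{N}(B_{1}^{c})}^{N}+\tau\|u\|_{N}^{N}$, and a one-dimensional H\"older estimate of $u(1)-u(2)$ against $\|\nabla u\|_{L^{N}(B_{1}^{c})}$ combined with $u(2)^{N}\lesssim\|u\|_{N}^{N}$ yields $u_{0}\le C(1-a_{1})^{1/N}$, hence $u_{0}^{N/(N-1)}\le C(1-a_{1})^{1/(N-1)}$. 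On $B_{1}$ I write $u=w+u_{0}$ with $w=(u-u_{0})_{+}\in W_{0}^{1,N}(B_{1})$ and $\|\nabla w\|_{L^{N}(B_{1})}^{N}\le a_{1}$, and apply the convexity inequality with $\lambda\sim 1-a_{1}$ chosen so that $(1+\lambda)a_{1}^{1/(N-1)}\le 1$: then $\exp(\alpha u^{N/(N-1)})\le\exp(\alpha C_{\lambda}u_{0}^{N/(N-1)})\exp(\alpha(1+\lambda)w^{N/(N-1)})$ with $\alpha C_{\lambda}u_{0}^{N/(N-1)}\lesssim\lambda^{-1/(N-1)}(1-a_{1})^{1/(N-1)}=O(1)$, while $w/\|\nabla w\|_{L^{N}(B_{1})}$ has unit Dirichlet norm and the effective exponential constant is $\le\alpha(1+\lambda)a_{1}^{1/(N-1)}\le\alpha\le(1-\beta/N)\alpha_{N}$, so Adimurthi--Sandeep on $B_{1}$ bounds $\int_{B_{1}}\exp(\alpha(1+\lambda)w^{N/(N-1)})|x|^{-\beta}\,dx$ uniformly (when $\|\nabla w\|_{L^{N}(B_{1})}=0$ the function $u$ is constant on $B_{1}$ and the bound is immediate). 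Together with the $B_{1}^{c}$ estimate this proves the supremum is finite for every $\tau>0$ whenever $\alpha\le(1-\beta/N)\alpha_{N}$.

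Sharpness is the Moser-sequence computation: take $m_{k}$ supported in $B_{1}$ with $\|\nabla m_{k}\|_{N}=1$, $\|m_{k}\|_{N}\to 0$, set $c_{k}=(1+\tau\|m_{k}\|_{N}^{N})^{-1/N}\to 1$ and $u_{k}=c_{k}m_{k}$, so $\|u_{k}\|_{1,\tau}=1$; on $B_{1/k}$, where $u_{k}\equiv c_{k}w_{N-1}^{-1/N}(\log k)^{(N-1)/N}$, one gets $\int_{\mathbb{R}^{N}}F(u_{k})|x|^{-\beta}\,dx\ge c\,k^{-(N-\beta)}\exp(\tfrac{N}{\alpha_{N}}\alpha c_{k}^{N/(N-1)}\log k)=c\exp(N[\tfrac{\alpha}{\alpha_{N}}c_{k}^{N/(N-1)}-(1-\tfrac{\beta}{N})]\log k)$, and the bracket tending to $\alpha_{N}^{-1}(\alpha-(1-\beta/N)\alpha_{N})>0$ forces divergence once $\alpha>(1-\beta/N)\alpha_{N}$. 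The main obstacle is precisely the concentration regime of the third paragraph: one cannot simply discard the boundary value $u(1)$ when passing from $u$ on $B_{1}$ to a function in $W_{0}^{1,N}(B_{1})$, and the whole point is to quantify how the residual energy $1-a_{1}$ forces $u(1)$ to be small and to balance this against the loss $C_{\lambda}\sim\lambda^{-1/(N-1)}$ incurred by relaxing the exponential constant, so that the product stays bounded exactly when the rescaled Dirichlet norm is $\le 1$ and $\alpha\le(1-\beta/N)\alpha_{N}$; managing this bookkeeping, and the switch between the full norm $\|\cdot\|_{1,\tau}$ and the bare Dirichlet norm required by Adimurthi--Sandeep, is the only genuinely delicate point, everything else being routine.
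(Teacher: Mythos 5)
The paper does not prove this lemma: it is stated as Lemma 1.1 and quoted directly from Adimurthi and Yang (reference \cite{AY}), so there is no in-paper argument to compare against. Your proposal therefore has to be judged on its own merits, and on those merits it is correct and complete, following what is essentially the standard symmetrization route that \cite{AY} also uses. The three parts are handled appropriately: the pointwise finiteness via density and the elementary convexity inequality $(a+b)^{N/(N-1)}\le(1+\lambda)a^{N/(N-1)}+C_{\lambda}b^{N/(N-1)}$ to push the effective exponent below the Adimurthi--Sandeep threshold after subtracting a smooth compactly supported approximant; the uniform bound by P\'olya--Szeg\H{o} and Hardy--Littlewood to reduce to radial nonincreasing $u$, then splitting at $|x|=1$; and sharpness by the Moser sequence normalized in the $\|\cdot\|_{1,\tau}$-norm. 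The genuinely delicate step is the one you single out: in the concentration regime $a_1:=\|\nabla u\|_{L^N(B_1)}^N\to 1$, the residual energy forces $u(1)\lesssim_\tau(1-a_1)^{1/N}$, and the choice $\lambda\sim 1-a_1$ keeps both $(1+\lambda)a_1^{1/(N-1)}\le 1$ (so Adimurthi--Sandeep applies on $B_1$ at exponent $\le\alpha$) and $C_\lambda u(1)^{N/(N-1)}=O(1)$; since $a_1^{-1/(N-1)}-1\ge\frac{1-a_1}{N-1}$ by convexity, the choice $\lambda=\frac{1-a_1}{N-1}$ is admissible for every $a_1\in(0,1]$, so there is no case split to worry about. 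Two cosmetic remarks: the Moser change of variables $r=e^{-t/N}$ you set up on $B_1$ is never actually invoked downstream (you argue via Adimurthi--Sandeep directly), so it could be dropped; and in the $B_1^c$ estimate it is worth noting explicitly that the bounds there may depend on $\tau$, which you do, and that this is permitted because the lemma asserts finiteness for each fixed $\tau$, not a $\tau$-independent bound.
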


Motivated by this Trudinger-Moser inequality, do \'{O} \cite{Do1, Do} and do \'{O},  Medeiros and  Severo \cite{DoMS} studied the
quasilinear elliptic equations  when $\beta=0$ and
Adimurthi and Yang \cite{AY}
studied the singular quasilinear elliptic equations for $0\le \beta<N$, both with the maximal growth on
the singular nonlinear term $\frac{f(x,u)}{\left\vert x\right\vert ^{\beta}}$
which allows them to treat the equations variationally in a subspace of
$W^{1,N}\left(  \mathbb{R}^{N}\right)  $. More precisely, they can find a
nontrivial weak solution of mountain-pass type to the equation with the
perturbation
\[
-\operatorname{div}\left(  \left\vert \nabla u\right\vert ^{N-2}\nabla
u\right)  +V(x)\left\vert u\right\vert ^{N-2}u=\frac{f(x,u)}{\left\vert
x\right\vert ^{\beta}}+\varepsilon h(x)
\]
Moreover, they proved that when the positive parameter $\varepsilon$ is small
enough, the above equation has a weak solution with negative energy. However, it was not proved in \cite{AY}
if those solutions are different or not. We also should
stress that they need a small nonzero perturbation $\varepsilon h(x)$ in their
equation to get the nontriviality of the solutions.

In this paper, we will study further about the equation considered in the whole space
\cite{A, Do1, Do, DoMS, AY}. More precisely, we consider the existence and multiplicity of nontrivial weak
solution for the nonuniformly elliptic equations of $N-$Laplacian type of the
form:%
\begin{equation}
-\operatorname{div}\left(  a\left(  x,\nabla u\right)  \right)
+V(x)\left\vert u\right\vert ^{N-2}u=\frac{f(x,u)}{\left\vert x\right\vert
^{\beta}}+\varepsilon h(x) \label{1.1}%
\end{equation}
where%
\[
\left\vert a\left(  x,\tau\right)  \right\vert \leq c_{0}\left(  h_{0}\left(
x\right)  +h_{1}\left(  x\right)  \left\vert \tau\right\vert ^{N-1}\right)
\]
for any $\tau$ in $%
\mathbb{R}
^{N}$ and a.e. $x$ in $\mathbb{R}^{N}$, $h_{0}\in L^{N/(N-1)}\left(
\mathbb{R}^{N}\right)  $ and $h_{1}\in L_{loc}^{\infty}\left(  \mathbb{R}%
^{N}\right)  $. Note that the equation in \cite{AY} is a special case of our
equation when $a\left(  x,\nabla u\right)  =\left\vert \nabla u\right\vert
^{N-2}\nabla u$. In fact, the elliptic equations of nonuniform type is a
natural generalization of the $p-$Laplacian equation and were studied by many
authors, see \cite{DV, G, HN, HZ, Te}. As mentioned earlier, the main features
of this class of equations are that they are defined in the whole
$\mathbb{R}^{N}$ and with the critical growth of the singular nonlinear term
$\frac{f(x,u)}{\left\vert x\right\vert ^{\beta}}$ and the nonuniform nonlinear
operator of $p$-Laplacian type. In spite of a possible failure of the
Palais-Smale compactness condition, in this paper, we still use the
Mountain-pass approach for the critical growth as in \cite{Do1, AY, Do, DoMS} to
derive a weak solution and get the nontriviality of this solution thanks to
the small nonzero perturbation $\varepsilon h(x)$.

In the case of $N-$Laplacian, i.e.,
\[
a\left(  x,\nabla u\right)  =\left\vert \nabla u\right\vert ^{N-2}\nabla u,
\]
our equation is exactly the equation studied in \cite{AY}:
\begin{equation}
-\operatorname{div}\left(  \left\vert \nabla u\right\vert ^{N-2}\nabla
u\right)  +V(x)\left\vert u\right\vert ^{N-2}u=\frac{f(x,u)}{\left\vert
x\right\vert ^{\beta}}+\varepsilon h(x) \label{1.2}%
\end{equation}
Using the Radial lemma, Schwarz  symmetrization and a modified
result of Lions \cite{L} about the singular Moser-Trudinger
inequality, we will prove that two solutions derived in \cite{AY}
are actually different. Thus as our second main result, we get the
multiplicity of solutions to the equation (\ref{1.2}). This result
extends the result in \cite{AY} and also the multiplicity result in
\cite{DoMS} ($\beta=0$) to the singular case ($0\le \beta <N$).

Our next concern is about the existence of solution of the equation without
the perturbation%
\begin{equation}
-\operatorname{div}\left(  \left\vert \nabla u\right\vert ^{N-2}\nabla
u\right)  +V(x)\left\vert u\right\vert ^{N-2}u=\frac{f(x,u)}{\left\vert
x\right\vert ^{\beta}}. \label{1.3}%
\end{equation}
Using an approach as in \cite{Do1, Do, DoMS}, we prove that we don't even
require the nonzero perturbation as in \cite{AY} to get the nontriviality of
the mountain-pass type weak solution.

Our main tool in this paper is critical point theory. More
precisely, we will use the Mountain-pass Theorem that is proposed by
Ambrosetti and Rabinowitz in the celebrated paper \cite{AR}.
Critical point theory has become one of the main tools for finding
solutions to elliptic equations of variational type. We stress that
to use the Mountain-pass Theorem, we need to verify some types of
compactness for the associated Lagrange-Euler functional, namely the
Palais-Smale condition and the Cerami condition. Or at least, we
must prove the boundedness of the Palais-Smale or Cerami sequence
\cite{Ce1, Ce2}. In almost all of works, we can easily establish
this condition thanks to the Ambrosetti-Rabinowitz (AR) condition,
see $(f2)$ or $(f3)$ in Section 2. However, there are many
interesting examples of nonlinear terms $f$ which do not satisfy the
Ambrosetti-Rabinowitz condition, but based on our theorem we can
still conclude the existence and multiplicity of solutions. Thus our
next result is that we will establish again the above results when
the nonlinearity does not satisfy this famous (AR) condition. For
the $N-$Laplacian equation in a bounded domain in $\mathbb{R}^{N}$,
such a result of existence has been established by the authors in
\cite{LaLu}.

We mention in passing that the study of the existence and
multiplicity results of nonuniformly elliptic equations of
$N-$Laplacian type are motivated by our earlier work on the
Heisenberg group \cite{LaLu2}. Our assumptions on the potential $V$
are exactly those considered in  \cite{Do1, Do, DoMS, AY}, namely
$V(x)\ge V_0>0$ in $\mathbb{R}^N$ and $V^{-1}\in L^1(\mathbb{R}^N)$
or $|\{x\in \mathbb{R}^N: V(x)<M\}|<\infty$ for every $M>0$. Very
recently, Yang has established in \cite{Y} when $a(x, \nabla
u)=|\nabla u|^{N-2}\nabla u$ the multiplicity of solutions when the
nonlinear term $f$ satisfies the Ambrosetti-Rabinowitz condition and
the potential $V$ is under a stronger assumption than ours. More
precisely, it is assumed in \cite{Y} that $V^{-1}\in
L^{\frac{1}{N-1}}(\mathbb{R}^{N})$ which implies $V^{-1}\in
L^{1}(\mathbb{R}^{N})$ when $V(x)\ge V_{0}>0$ in $\mathbb{R}^{N}$.
The stronger assumption of integrability on $V^{-1}$ in \cite{Y}
guarantees that the embedding $E\rightarrow L^{q}(\mathbb{R}^{N})$
is compact for all $1\leq q<\infty$. The argument in \cite{Y}, as
pointed out by the author of \cite{Y}, depends crucially on this
compact embedding for all $1\leq q<\infty$. The assumption on the
potential $V$ in our paper only assures the compact embedding
$E\rightarrow L^{q}(\mathbb{R}^{N})$ for $q\geq N$. Nevertheless,
this compact embedding for $q\geq N$ is sufficient for us to carry
out the proof of the multiplicity of solutions to equation
(\ref{1.2}) and existence of solutions to equation (\ref{1.3})
without the perturbation term. (See Proposition \ref{prop5.2} and
Remark \ref{rem5.2} in Section 5 for more details). Moreover, our
theorems hold even when $f$ does not satisfy the
Ambrosetti-Rabinowitz condition.

The paper is organized as follows: In the next section, we give the main
assumptions which are used throughout this paper except the last section and
our main results. In Section 3, we prove some preliminary results. Section 4
is devoted to study the existence of nontrivial solutions for the nonuniformly
elliptic equations of $N-$Laplacian type (\ref{1.1}). The multiplicity of
nontrivial solutions to Equation (\ref{1.2}) is investigated in Section 5.
Section 6 is about the existence of nontrivial solutions to the equation
without the perturbation (\ref{1.3}). Finally, in Section 7 we study the
results in Sections 5 and 6 again without the well-known Ambrosetti-Rabinowitz
(AR) condition.

\section{Assumptions and Main Results}

Motivated by the Trudinger-Moser inequality in Lemma 1.1, we consider here the
maximal growth on the nonlinear term $f(x,u)$ which allows us to treat
Eq.(\ref{1.1}) variationally in a subspace of $W^{1,N}\left(  \mathbb{R}%
^{N}\right)  $. We assume that $f:\mathbb{R}^{N}\times\mathbb{R}%
\rightarrow\mathbb{R}$ is continuous, $f(x,0)=0$ and $f$ behaves like
$\exp\left(  \alpha\left\vert u\right\vert ^{N/(N-1)}\right)  $ as $\left\vert
u\right\vert \rightarrow\infty$. More precisely, we assume the following
growth conditions on the nonlinearity $f(x,u)$ as in \cite{Do1, Do, DoMS, AY}:

$(f1)$ There exist constants $\alpha_{0},~b_{1},~b_{2}>0$ such that for all
$\left(  x,u\right)  \in\mathbb{R}^{N}\times\mathbb{R}^{+}$,
\[
0<f(x,u)\leq b_{1}\left\vert u\right\vert ^{N-1}+b_{2}\left[  \exp\left(
\alpha_{0}\left\vert u\right\vert ^{N/(N-1)}\right)  -S_{N-2}\left(
\alpha_{0},u\right)  \right]  ,
\]
where
\[
S_{N-2}\left(  \alpha_{0},u\right)  =\sum\limits_{k=0}^{N-2}\frac{\alpha
_{0}^{k}}{k!}\left\vert u\right\vert ^{kN/(N-1)}.
\]

$(f2)$ There exist $p>N$ such that for all $x\in\mathbb{R}^{N}$ and $s>0$,
\[
0<pF(x,s)=p%
{\displaystyle\int\limits_{0}^{s}}
f(x,\tau)d\tau\leq sf(x,s)
\]
This is the well-known Ambrosetti-Rabinowitz condition.

$(f3)$ There exist constants $R_{0},~M_{0}>0$ such that for all $x\in
\mathbb{R}^{N}$ and $s\geq R_{0}$,
\[
F(x,s)\leq M_{0}f(x,s).
\]
Since we are interested in nonnegative weak solutions, it is convenient to
define
\begin{equation}
f(x,u)=0\text{ for all }\left(  x,u\right)  \in\mathbb{R}^{N}\times\left(
-\infty,0\right]  . \label{2.1}%
\end{equation}

Let $\ A$ be a measurable function on $\mathbb{R}^{N}\times\mathbb{R}$ such
that $A(x,0)=0$ and $a(x,\tau)=\frac{\partial A\left(  x,\tau\right)
}{\partial\tau}$ is a Caratheodory function on $\mathbb{R}^{N}\times
\mathbb{R}$. Assume that there are positive real numbers $c_{0},c_{1},k_{1}$
and two nonnegative measurable functions $h_{0},~h_{1}$ on $\mathbb{R}^{N}$
such that $h_{1}\in L_{loc}^{\infty}\left(  \mathbb{R}^{N}\right)  ,\ h_{0}\in
L^{N/(N-1)}\left(  \mathbb{R}^{N}\right)  ,~h_{1}(x)\geq1$ for a.e. $x$ in
$\mathbb{R}^{N}$ and the following conditions hold:
\[
\left.
\begin{array}
[c]{l}%
\left(  A1\right)  ~\left\vert a(x,\tau)\right\vert \leq c_{0}\left(
h_{0}\left(  x\right)  +h_{1}\left(  x\right)  \left\vert \tau\right\vert
^{N-1}\right)  ,\,~\forall\tau\in%
\mathbb{R}
^{N},~a.e.\,x\in\mathbb{R}^{N}\\
(A2)~c_{1}\left\vert \tau-\tau_{1}\right\vert ^{N}\leq\left\langle
a(x,\tau)-a(x,\tau_{1}),\tau-\tau_{1}\right\rangle \,~\forall\tau,\tau_{1}\in%
\mathbb{R}
^{N},~a.e.\,\,x\in\mathbb{R}^{N}\\
(A3)~0\leq a(x,\tau).\tau\leq NA\left(  x,\tau\right)  \text{ }\,\,\forall
\tau\in%
\mathbb{R}
^{N},~a.e.\,x\in\mathbb{R}^{N}\\
(A4)~A\left(  x,\tau\right)  \geq k_{0}h_{1}\left(  x\right)  \left\vert
\tau\right\vert ^{N}~\,\forall\tau\in%
\mathbb{R}
^{N},~a.e.\,x\in\mathbb{R}^{N}.
\end{array}
~\right.
\]
Then $A$ verifies the growth condition:
\begin{equation}
\left\vert A\left(  x,\tau\right)  \right\vert \leq c_{0}\left(  h_{0}\left(
x\right)  \left\vert \tau\right\vert +h_{1}\left(  x\right)  \left\vert
\tau\right\vert ^{N}\right)  \,~\forall\tau\in%
\mathbb{R}
^{N},~a.e.\,x\in\mathbb{R}^{N} \label{2.2}%
\end{equation}
Next, we introduce some notations:%

\[%
\begin{array}
[c]{l}%
E=\left\{  u\in\bigskip W_{0}^{1,N}(\mathbb{R}^{N}):\int_{\mathbb{R}^{N}}%
h_{1}(x)\left\vert \nabla u\right\vert ^{N}dx+\int_{\mathbb{R}^{N}%
}V(x)\left\vert u\right\vert ^{N}<\infty\right\} \\
\left\Vert u\right\Vert _{E}=\left(  \int_{\mathbb{R}^{N}}\left(
h_{1}(x)\left\vert \nabla u\right\vert ^{N}+\frac{1}{k_{0}N}V(x)\left\vert
u\right\vert ^{N}\right)  dx\right)  ^{1/N},~u\in\bigskip E\\
\lambda_{1}\left(  N\right)  =\inf\left\{  \frac{\left\Vert u\right\Vert
_{E}^{N}}{\int_{\mathbb{R}^{N}}\frac{\left\vert u\right\vert ^{N}}{\left\vert
x\right\vert ^{\beta}}dx}:u\in\bigskip E\setminus\left\{  0\right\}  \right\}
\end{array}
\]

\bigskip We also assume the following conditions on the potential as in \cite{Do1, Do, DoMS, AY}:

$(V1)$ $V$ is a continuous function such that $V(x)\geq V_{0}>0$ for all
$x\in\mathbb{R}^{N}$, we can see that $E$ is a reflexive Banach space when
endowed with the norm
\[
\left\Vert u\right\Vert _{E}=\left(  \int_{\mathbb{R}^{N}}\left(
h_{1}(x)\left\vert \nabla u\right\vert ^{N}+\frac{1}{k_{0}N}V(x)\left\vert
u\right\vert ^{N}\right)  dx\right)  ^{1/N}%
\]
and for all $N\leq q<\infty$,
\[
E\hookrightarrow W^{1,N}\left(  \mathbb{R}^{N}\right)  \hookrightarrow
L^{q}\left(  \mathbb{R}^{N}\right)
\]
with continuous embedding. Furthermore,
\begin{equation}
\lambda_{1}\left(  N\right)  =\inf\left\{  \frac{\left\Vert u\right\Vert
_{E}^{N}}{\int_{\mathbb{R}^{N}}\frac{\left\vert u\right\vert ^{N}}{\left\vert
x\right\vert ^{\beta}}dx}:u\in\bigskip E\setminus\left\{  0\right\}  \right\}
>0\text{ for any }0\leq\beta<N\text{.} \label{2.3}%
\end{equation}
In order to get the compactness of the embedding
\[
E\hookrightarrow L^{p}\left(  \mathbb{R}^{N}\right)  \text{ for all }p\geq N
\]
we also assume the following conditions on the potential $V$:

$(V2)$ $V(x)\rightarrow\infty$ as $\left\vert x\right\vert \rightarrow\infty$;
or more generally, for every $M>0$,
\[
\mu\left(  \left\{  x\in\mathbb{R}^{N}:V(x)\leq M\right\}  \right)  <\infty.
\]
or

$(V3)$ The function $\left[  V(x)\right]  ^{-1}$ belongs to $L^{1}\left(
\mathbb{R}^{N}\right)  $.

Now, from $(f1)$, we obtain for all $\left(  x,u\right)  \in\mathbb{R}%
^{N}\times%
\mathbb{R}
$,
\[
\left\vert F\left(  x,u\right)  \right\vert \leq b_{3}\left[  \exp\left(
\alpha_{1}\left\vert u\right\vert ^{N/(N-1)}\right)  -S_{N-2}\left(
\alpha_{1},u\right)  \right]
\]
for some constants $\alpha_{1}$, $b_{3}>0$. Thus, by Lemma 1.1, we have
$F\left(  x,u\right)  \in L^{1}\left(  \mathbb{R}^{N}\right)  $ for all $u\in
W^{1,N}\left(  \mathbb{R}^{N}\right)  $. Define the functionals
$J,~J_{\varepsilon}:E\rightarrow%
\mathbb{R}
$ by%

\begin{align*}
J_{\varepsilon}(u)  &  =\int_{\mathbb{R}^{N}}A(x,\nabla u)dx+\frac{1}{N}%
\int_{\mathbb{R}^{N}}V(x)\left\vert u\right\vert ^{N}dx-\int_{\mathbb{R}^{N}%
}\frac{F(x,u)}{\left\vert x\right\vert ^{\beta}}dx-\varepsilon\int
_{\mathbb{R}^{N}}hudx\\
J(u)  &  =\frac{1}{N}\int_{\mathbb{R}^{N}}\left\vert \nabla u\right\vert
^{N}dx+\frac{1}{N}\int_{\mathbb{R}^{N}}V(x)\left\vert u\right\vert ^{N}%
dx-\int_{\mathbb{R}^{N}}\frac{F(x,u)}{\left\vert x\right\vert ^{\beta}}dx
\end{align*}
then the functionals $J,~J_{\varepsilon}~$are well-defined by Lemma 1.1.
Moreover, $J,~J_{\varepsilon}~$are the $C^{1}$ functional on $E$ and $\forall
u,v\in E,$
\begin{align*}
DJ_{\varepsilon}\left(  u\right)  v  &  =\int_{\mathbb{R}^{N}}a\left(
x,\nabla u\right)  \nabla vdx+\int_{\mathbb{R}^{N}}V(x)\left\vert u\right\vert
^{N-2}vdx-\int_{\mathbb{R}^{N}}\frac{f(x,u)v}{\left\vert x\right\vert ^{\beta
}}dx-\varepsilon\int_{\mathbb{R}^{N}}hvdx\\
DJ\left(  u\right)  v  &  =\int_{\mathbb{R}^{N}}\left\vert \nabla u\right\vert
^{N-2}\nabla u\nabla vdx+\int_{\mathbb{R}^{N}}V(x)\left\vert u\right\vert
^{N-2}vdx-\int_{\mathbb{R}^{N}}\frac{f(x,u)v}{\left\vert x\right\vert ^{\beta
}}dx.~
\end{align*}
Note that in the case of $N-$Laplacian: $A\left(  x,\tau\right)  =\frac{1}%
{N}\left\vert \tau\right\vert ^{N}$, we choose
\[
a\left(  x,\tau\right)  =\left\vert \tau\right\vert ^{N-2}\tau,~k_{0}=\frac
{1}{N},~h_{1}\left(  x\right)  =1.
\]

We next state our main results.

\begin{theorem}
Suppose that (V1) and (V2) (or (V3)) and (f1)-(f2) are satisfied. Furthermore,
assume that
\[
\text{(f4) }\lim\sup_{s\rightarrow0+}\frac{F(x,s)}{k_{0}\left\vert
s\right\vert ^{N}}<\lambda_{1}(N)\text{ uniformly in }x\in\mathbb{R}^{N}.
\]
Then there exists $\varepsilon_{1}>0$ such that for each $0<\varepsilon
<\varepsilon_{1}$, problem (\ref{1.1}) has a nontrivial weak solution of
mountain-pass type.
\end{theorem}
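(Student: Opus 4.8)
The plan is to apply the Mountain-Pass Theorem (without the Palais–Smale condition, using the version that only requires a Cerami sequence) to the functional $J_\varepsilon$ on $E$. I would proceed in four stages. First, I would verify the mountain-pass geometry: (i) there exist $\rho,\sigma>0$ such that $J_\varepsilon(u)\geq\sigma$ whenever $\|u\|_E=\rho$, provided $\varepsilon$ is small; and (ii) there exists $e\in E$ with $\|e\|_E>\rho$ and $J_\varepsilon(e)<0$. For (i), I would combine $(A4)$ (so $\int A(x,\nabla u)\,dx\ge k_0\int h_1|\nabla u|^N\,dx$, giving $\int A(x,\nabla u)\,dx+\frac1N\int V|u|^N\,dx\geq k_0N\|u\|_E^N$ up to the normalization in the norm), with $(f4)$ and $(f1)$ to bound $\int|x|^{-\beta}F(x,u)\,dx$ from above by $(\lambda_1(N)-\delta)k_0\int|x|^{-\beta}|u|^N\,dx$ plus a term controlled by the singular Moser–Trudinger inequality (Lemma 1.1) of order $\|u\|_E^{q}$ with $q>N$; the definition of $\lambda_1(N)$ then absorbs the first piece, leaving $J_\varepsilon(u)\ge C\|u\|_E^N - C'\|u\|_E^q - \varepsilon\|h\|_*\|u\|_E$, which is positive on a small sphere once $\varepsilon<\varepsilon_1$. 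For (ii), I would pick any nonnegative $u_0\in C_c^\infty$ with $\int h u_0\,dx\geq0$ and use $(f2)$ (the Ambrosetti–Rabinowitz condition), which forces $F(x,s)\geq c\,s^p - c'$ for $s$ large, together with the growth bound (\ref{2.2}) on $A$, to conclude $J_\varepsilon(tu_0)\to-\infty$ as $t\to+\infty$.

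Second, I would define the minimax level $c_\varepsilon=\inf_{\gamma\in\Gamma}\max_{t\in[0,1]}J_\varepsilon(\gamma(t))$ over paths from $0$ to $e$, note $c_\varepsilon\geq\sigma>0$, and obtain from the Mountain-Pass Theorem a Cerami sequence $(u_n)\subset E$ with $J_\varepsilon(u_n)\to c_\varepsilon$ and $(1+\|u_n\|_E)\,\|DJ_\varepsilon(u_n)\|_{E^*}\to0$. Third, I would show $(u_n)$ is bounded in $E$: testing the near-criticality against $u_n$ and using $(f2)$ (via $pF(x,s)\leq sf(x,s)$) together with $(A3)$ ($a(x,\tau)\cdot\tau\leq NA(x,\tau)$) yields an inequality of the form $\big(\tfrac1N-\tfrac1p\big)(\text{norm-like quantity})\leq c_\varepsilon + o(1) + o(\|u_n\|_E)$, from which boundedness follows because the norm-like quantity dominates $k_0 N\|u_n\|_E^N$ by $(A4)$; if only $(f3)$ were available one would argue with the Cerami condition instead, but here $(f2)$ suffices.

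Fourth — and this is where the real work lies — I would pass to the limit. By reflexivity, along a subsequence $u_n\rightharpoonup u$ in $E$, and by $(V1)$–$(V2)$ (or $(V3)$) the embedding $E\hookrightarrow L^q(\mathbb{R}^N)$ is compact for all $q\geq N$, so $u_n\to u$ in $L^q$ and a.e. The delicate point is the convergence of the nonlinear terms $\int|x|^{-\beta}f(x,u_n)\varphi\,dx$ and $\int a(x,\nabla u_n)\cdot\nabla\varphi\,dx$. For the first, the key obstacle is the exponential critical growth: one must show the sequence $|x|^{-\beta}f(x,u_n)$ is equi-integrable, which requires an a priori bound of the form $\|\nabla u_n\|_{L^N}^N$ (more precisely $\alpha_0\|u_n\|_E^{N/(N-1)}$) lying strictly below the critical threshold $(1-\beta/N)\alpha_N$ — this is the standard subcriticality estimate for the mountain-pass level, proved by testing with a suitable truncation and using the concentration behavior of Moser functions; I expect this to be the main difficulty and would follow the scheme of do Ó–Medeiros–Severo and Adimurthi–Yang, adapted to the operator $a(x,\tau)$. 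Granting that, a generalized dominated convergence argument (Lemma 1.1 provides the dominating exponential integrability) gives $\int|x|^{-\beta}f(x,u_n)\varphi\,dx\to\int|x|^{-\beta}f(x,u)\varphi\,dx$ for $\varphi\in C_c^\infty$. For the operator term, I would use the strict monotonicity $(A2)$: from $DJ_\varepsilon(u_n)(u_n-u)\to0$ and the already-established convergence of the lower-order and nonlinear terms, deduce $\int\langle a(x,\nabla u_n)-a(x,\nabla u),\nabla u_n-\nabla u\rangle\,dx\to0$, hence $\nabla u_n\to\nabla u$ in measure (and, via $(A2)$ again, strongly enough in $L^N_{loc}$) to identify the weak limit of $a(x,\nabla u_n)$ as $a(x,\nabla u)$. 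Passing to the limit in $DJ_\varepsilon(u_n)\varphi=o(1)$ then shows $u$ is a weak solution. Finally, nontriviality: were $u\equiv0$, the subcriticality bound would force $\int|x|^{-\beta}F(x,u_n)\,dx\to0$ and $\int|x|^{-\beta}f(x,u_n)u_n\,dx\to0$, whence $J_\varepsilon(u_n)\to0+\varepsilon\cdot0$, contradicting $c_\varepsilon\geq\sigma>0$; so $u\neq0$, and since $f(x,s)=0$ for $s\leq0$ and $V>0$, testing against $u^-$ shows $u\geq0$. This produces the desired nontrivial nonnegative weak solution of mountain-pass type.
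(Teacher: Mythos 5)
Your outline of the mountain-pass geometry, the extraction of a Cerami sequence, and the boundedness of that sequence via $(f2)$ and $(A3)$--$(A4)$ matches the paper (Lemmas 3.4--3.5, Proposition 4.1, and the first part of Lemma 4.1). However, two parts of your plan diverge from the paper in ways that create real gaps under the hypotheses of Theorem~2.1.

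First, your passage to the limit in the nonlinear term relies on establishing equi-integrability of $|x|^{-\beta}f(x,u_n)$ through a subcriticality estimate ``$\alpha_0\|u_n\|_E^{N/(N-1)}$ strictly below $(1-\beta/N)\alpha_N$'', obtained via Moser functions and a bound on the minimax level. That estimate is precisely what the paper proves in Lemma~3.6, and it requires the assumption $(f5)$, which is \emph{not} assumed in Theorem~2.1. Under $(f1)$--$(f2)$, $(f4)$ alone, the mountain-pass level need not lie below the threshold, so the Moser--Trudinger inequality cannot be invoked to dominate the exponential terms. The paper avoids this entirely: Lemma~4.1 uses the compactness result of de Figueiredo--Miyagaki--Ruf (Lemma~2.1 in \cite{FMR}), which yields $f(x,u_n)/|x|^\beta\to f(x,u)/|x|^\beta$ in $L^1_{loc}$ solely from the boundedness of $\int |x|^{-\beta}f(x,u_n)u_n\,dx$ (a consequence of $(f2)$ and Cerami boundedness) together with a.e.\ convergence. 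This weaker $L^1_{loc}$ convergence is all that is needed to pass to the limit against $\varphi\in C_0^\infty$, and it does not need any control on the size of $\|u_n\|_E$ relative to the critical threshold. The identification of $\nabla u_n\to\nabla u$ a.e.\ is done via the energy-concentration set and $(A2)$, broadly as you describe.

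Second, your nontriviality argument is both more complicated and incorrect as stated: you claim that if $u\equiv0$ the ``subcriticality bound'' forces $\int|x|^{-\beta}F(x,u_n)\,dx\to0$ and $\int|x|^{-\beta}f(x,u_n)u_n\,dx\to0$, so $J_\varepsilon(u_n)\to0$, contradicting $c_\varepsilon>0$. Besides relying again on the unavailable subcriticality estimate, this would only yield $J_\varepsilon(u_n)\to\lim\frac1N\|u_n\|_E^N\ge0$ (the norm need not vanish when $u_n\rightharpoonup0$), so there is no contradiction. The paper's nontriviality argument is trivial and is precisely what the perturbation is for: if $u\equiv0$ were a weak solution of (\ref{1.1}), then $DJ_\varepsilon(0)v=-\varepsilon\int hv\,dx=0$ for all $v$, forcing $h=0$, contrary to $h\ne0$. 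The level-based nontriviality argument you sketch is exactly the one the paper reserves for Theorem~2.3 (the unperturbed equation), where $(f5)$ \emph{is} assumed and the subcriticality bound is available.
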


\begin{theorem}
Suppose that (V1) and (V2) (or (V3)) and (f1)-(f3) are satisfied. Furthermore,
assume that
\[
\text{(f4) }\lim\sup_{s\rightarrow0+}\frac{NF(x,s)}{\left\vert s\right\vert
^{N}}<\lambda_{1}(N)\text{ uniformly in }x\in\mathbb{R}^{N}.
\]
and there exists $r>0$ such that
\begin{align*}
&  \text{(f5)}\underset{s\rightarrow\infty}{\lim}sf(x,s)\exp\left(
-\alpha_{0}\left\vert s\right\vert ^{N/(N-1)}\right) \\
&  >\frac{1}{\left[  \frac{r^{N-\beta}}{N-\beta}e^{(\alpha_{N}d(N-\beta
)/N)}+\mathcal{C}r^{N-\beta}-\frac{r^{N-\beta}}{N-\beta}\right]  }\left(
\frac{N-\beta}{\alpha_{0}}\right)  ^{N-1}>0
\end{align*}
uniformly on compact subsets of $\mathbb{R}^{N}$ where $d$ and $\mathcal{C}%
$\ will be defined in section 3. Then there exists $\varepsilon_{2}>0$, such
that for each $0<\varepsilon<\varepsilon_{2}$, problem (\ref{1.2}) has at
least two nontrivial weak solutions and one of them has a negative energy.
\end{theorem}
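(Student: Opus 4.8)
The plan is to produce the two solutions by two different variational devices and then separate them by energy: the Ekeland variational principle yields a local minimizer with negative energy, and the Mountain-Pass Theorem yields a second critical point with strictly positive energy, provided one can push the mountain-pass level below the threshold where the Palais--Smale condition survives. Throughout, $J_\varepsilon$ is the $C^1$ functional defined in Section 2 (with $A(x,\tau)=\tfrac1N|\tau|^N$, $k_0=\tfrac1N$, $h_1\equiv1$), and the compact embedding $E\hookrightarrow L^q(\mathbb{R}^N)$ for $q\ge N$ supplied by $(V2)$ or $(V3)$ will be used repeatedly to pass to limits.

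First I would establish the local geometry of $J_\varepsilon$ near the origin. Combining $(f1)$ and $(f4)$ with the singular Trudinger--Moser inequality of Lemma 1.1, and the H\"older inequality to control the weight $|x|^{-\beta}$, one obtains $\rho>0$, $\delta>0$ and $\varepsilon_2>0$ so that $J_\varepsilon(u)\ge\delta$ whenever $\|u\|_E=\rho$ and $0<\varepsilon<\varepsilon_2$; here $\rho$ must be small enough that $\alpha_0\|u\|_E^{N/(N-1)}<(1-\tfrac{\beta}{N})\alpha_N$ so that Lemma 1.1 is applicable. Since $h\neq0$, one picks $v$ with $\int hv\,dx>0$ and checks $J_\varepsilon(tv)<0$ for small $t>0$, hence $c_0:=\inf_{\overline{B_\rho}}J_\varepsilon<0$. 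Applying Ekeland's principle on $\overline{B_\rho}$ produces a bounded sequence $(u_n)\subset\overline{B_\rho}$ with $J_\varepsilon(u_n)\to c_0$ and $J_\varepsilon'(u_n)\to0$; because $c_0<0$ lies below the compactness threshold, the uniform bound $\alpha_0\|u_n\|_E^{N/(N-1)}<(1-\tfrac{\beta}{N})\alpha_N$ gives, via Lemma 1.1 with a slightly larger exponent, enough integrability of $f(x,u_n)$ to pass to the limit in the nonlinear term using the compact embedding. One gets a weak solution $u_0$ with $J_\varepsilon(u_0)=c_0<0$. Testing against $u_0^-$ and using $f(x,\cdot)\equiv0$ on $(-\infty,0]$ together with $(A2)$ yields $u_0\ge0$, and $c_0<0=J_\varepsilon(0)$ forces $u_0\not\equiv0$.

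Next I would run the mountain-pass argument. From $(f2)$ one has $F(x,s)\ge c_1|s|^p-c_2$ for large $s$, so $J_\varepsilon(tv)\to-\infty$ as $t\to\infty$ for fixed nonnegative $v\in E$; fix $e=t_0v$ with $\|e\|_E>\rho$ and $J_\varepsilon(e)<0$, and set
\[
c_M=\inf_{\gamma\in\Gamma}\max_{t\in[0,1]}J_\varepsilon(\gamma(t)),\qquad \Gamma=\{\gamma\in C([0,1],E):\gamma(0)=0,\ \gamma(1)=e\},
\]
so that $c_M\ge\delta>0>c_0$. The decisive step is the strict upper bound
\[
c_M<\frac{1}{N}\Big(\frac{(1-\beta/N)\alpha_N}{\alpha_0}\Big)^{N-1},
\]
which is the level below which the Palais--Smale (equivalently Cerami) condition holds. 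For this I would insert a Moser-type concentrating family supported near the origin into a path through $e$, use $(f5)$ to bound the contribution of $\int|x|^{-\beta}F(x,\cdot)\,dx$ from below, and optimize in the concentration parameter; the constants $d$ and $\mathcal{C}$ in $(f5)$ are exactly the geometric quantities coming from that test function in Section 3, and the inequality in $(f5)$ is calibrated precisely so that the maximum of $J_\varepsilon$ along this path remains under the threshold. With the strict bound in hand, one checks that a Palais--Smale sequence at level $c_M$ is bounded (again by $(f2)$), that its weak limit $u_1$ is a critical point, and — ruling out concentration of $e^{\alpha_0|u_n|^{N/(N-1)}}$ by the sub-threshold energy and a Lions-type concentration--compactness lemma adapted to the singular weight, plus the compact embedding — that $u_n\to u_1$ strongly, so $J_\varepsilon(u_1)=c_M>0$ and $J_\varepsilon'(u_1)=0$; as before $u_1\ge0$, and $c_M>0=J_\varepsilon(0)$ gives $u_1\not\equiv0$. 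Finally, since $J_\varepsilon(u_0)=c_0<0<c_M=J_\varepsilon(u_1)$, the two solutions are distinct, which proves the theorem. The main obstacle is the intertwined pair of steps: the strict estimate $c_M<\tfrac1N((1-\beta/N)\alpha_N/\alpha_0)^{N-1}$ via the singular Moser test function, where the weight $|x|^{-\beta}$ complicates the classical computation and where $(f5)$ must be exploited sharply, and, in tandem, the compactness of Palais--Smale sequences strictly below that level, which requires the careful concentration--compactness analysis of the exponential nonlinearity.
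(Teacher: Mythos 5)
Your overall scaffolding (Ekeland on a small ball $\to$ a local minimizer $u_0$ with $J_\varepsilon(u_0)=c_0<0$; mountain-pass level $c_M>0$ pushed below the threshold $\tfrac1N\big((1-\beta/N)\alpha_N/\alpha_0\big)^{N-1}$ via the Moser functions and $(f5)$) matches the paper. The genuine gap is in the last step, where you assert that the Palais--Smale/Cerami sequence at level $c_M$ converges \emph{strongly} to a critical point $u_1$ with $J_\varepsilon(u_1)=c_M>0$, and then separate the two solutions by their energies. The paper does not prove this, and it is not a routine consequence of the sub-threshold level estimate. Indeed, the compactness result (Lemma 5.2) in the paper requires a bound on the \emph{norm} of the sequence, $\liminf\|u_k\|_E<\big(\tfrac{N-\beta}{N}\tfrac{\alpha_N}{\alpha_0}\big)^{(N-1)/N}$, which holds automatically for the Ekeland sequence on $\overline{B}_{\rho_\varepsilon}$ but has no reason to hold for the mountain-pass Cerami sequence. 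To invoke the Lions-type Lemma 3.3 for the mountain-pass sequence one needs an inequality of the form $q\alpha_0\lim\|\nabla v_k\|_N^{N/(N-1)}(1-\|\nabla w_0\|_N^N)^{1/(N-1)}<(1-\beta/N)\alpha_N$, and the available route to that estimate is to convert $\lim\|\nabla v_k\|_N^N-\|\nabla u_M\|_N^N$ into $N\big(c_M-J_\varepsilon(u_M)\big)$ and then use a bound on $c_M-J_\varepsilon(u_M)$. But $J_\varepsilon(u_M)$, the energy of the weak limit of the mountain-pass sequence, is unknown a priori; you have a chicken-and-egg problem, since proving $J_\varepsilon(u_M)=c_M$ requires strong convergence, which requires controlling $J_\varepsilon(u_M)$.

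The paper circumvents this by obtaining the mountain-pass solution $u_M$ only as a \emph{weak} limit (nontrivial because $h\neq0$, without identifying its energy) and then, in Proposition 5.2, arguing by contradiction: if $u_0=u_M$, then $J_\varepsilon(u_M)=c_0$ is known exactly, Remark 5.1 gives $c_M-J_\varepsilon(u_0)<\tfrac1N\big(\tfrac{N-\beta}{N}\tfrac{\alpha_N}{\alpha_0}\big)^{N-1}$, and this is precisely the input needed to run the Lions-lemma/uniform-integrability argument to obtain strong convergence $v_k\to u_0$, hence $J_\varepsilon(v_k)\to c_0<0$, contradicting $J_\varepsilon(v_k)\to c_M>0$. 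You should also be aware that an additional technical ingredient in that step is the ``double truncation'' argument (Remark 5.2) used to establish $\int_{\mathbb{R}^N}F(x,v_k)|x|^{-\beta}dx\to\int_{\mathbb{R}^N}F(x,u_0)|x|^{-\beta}dx$ under the potential assumptions $(V2)/(V3)$ (which only give compactness of $E\hookrightarrow L^q$ for $q\ge N$), avoiding the symmetrization used elsewhere; this is where $(f3)$ and the boundedness of $\int f(x,v_k)v_k|x|^{-\beta}$ enter. So: the separation of the two solutions is not by comparing $c_0$ with $c_M$ directly, but by the contradiction argument, and your proposal as written skips this and asserts a compactness statement the paper neither proves nor uses.
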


\begin{theorem}
Under the same hypotheses in Theorem 2.2, the problem without the perturbation
(\ref{1.3}) has a nontrivial weak solution.
\end{theorem}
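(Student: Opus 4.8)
The plan is to obtain the solution of the unperturbed equation (\ref{1.3}) as a mountain-pass critical point of the functional $J$ introduced in Section~2, which in the $N$-Laplacian case ($k_0=1/N$, $h_1\equiv 1$) reads $J(u)=\frac1N\|u\|_E^N-\int_{\mathbb{R}^N}\frac{F(x,u)}{|x|^\beta}\,dx$ with $\|u\|_E^N=\int_{\mathbb{R}^N}\left(|\nabla u|^N+V(x)|u|^N\right)dx$. The verification that $J$ has the mountain-pass geometry parallels the one carried out for $J_\varepsilon$ in the proof of Theorem~2.2 with $\varepsilon=0$: from $(f1)$, $(f4)$, Lemma~1.1 and $\lambda_1(N)>0$ one finds $\rho,\delta>0$ with $J(u)\ge\delta$ whenever $\|u\|_E=\rho$; and, fixing a nonnegative $u_0\in E\setminus\{0\}$, the Ambrosetti-Rabinowitz condition $(f2)$ forces $F(x,s)\ge c_1|s|^p-c_2$, so that $J(tu_0)\to-\infty$ and we may pick $e=t_0u_0$ with $\|e\|_E>\rho$ and $J(e)<0$. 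Setting $\Gamma=\{\gamma\in C([0,1],E):\gamma(0)=0,\ \gamma(1)=e\}$ and $c_M=\inf_{\gamma\in\Gamma}\max_{t\in[0,1]}J(\gamma(t))$, we have $c_M\ge\delta>0$, and a version of the Mountain-Pass Theorem not requiring the $(PS)$ condition a priori produces a Palais-Smale sequence $(u_n)\subset E$ for $J$ at the level $c_M$.

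The decisive quantitative ingredient is the level estimate $c_M<M_c:=\frac1N\left(\frac{(1-\beta/N)\alpha_N}{\alpha_0}\right)^{N-1}$, where $(1-\beta/N)\alpha_N$ is the sharp exponent in the singular Trudinger-Moser inequality of Lemma~1.1 and $M_c$ is the corresponding compactness threshold. To establish it I would use, as in the proof of Theorem~2.2, the truncated Moser functions concentrated at the origin (where the weight $|x|^{-\beta}$ is singular) and supported in a ball $B_r$: the constants $d$ and $\mathcal{C}$ in $(f5)$ come out of computing the $E$-energy of this family and the weighted integral of the lower bound that $(f5)$ provides for $F$, and $(f5)$ is precisely the inequality that yields $\max_{t\ge0}J(tw)<M_c$ for a suitable member $w$ of the family, hence $c_M<M_c$.

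Next one analyses the sequence $(u_n)$. Conditions $(f2)$ and $(f3)$ give, by the usual computation, that $(u_n)$ is bounded in $E$, so up to a subsequence $u_n\rightharpoonup u$ in $E$; by $(V2)$ (or $(V3)$) the embedding $E\hookrightarrow L^q(\mathbb{R}^N)$ is compact for every $q\ge N$ — which is all we shall need — so $u_n\to u$ in $L^q(\mathbb{R}^N)$ for $q\ge N$ and a.e. Because $c_M<M_c$, Lemma~1.1 renders the family $|x|^{-\beta}\exp(\alpha_0|u_n|^{N/(N-1)})$ equi-integrable on bounded sets, so $|x|^{-\beta}f(x,u_n)\to|x|^{-\beta}f(x,u)$ in $L^1_{\mathrm{loc}}(\mathbb{R}^N)$. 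Together with the a.e. convergence $\nabla u_n\to\nabla u$ — deduced from the strict monotonicity of $\tau\mapsto|\tau|^{N-2}\tau$ by the standard truncation argument — this lets one pass to the limit in $DJ(u_n)\varphi\to 0$ for $\varphi\in C_c^\infty(\mathbb{R}^N)$ and conclude that $u$ is a weak solution of (\ref{1.3}); testing against the negative part of $u$ gives $u\ge 0$.

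It remains to prove $u\not\equiv 0$, which is what distinguishes the unperturbed problem from the perturbed one and is where I expect the main difficulty. Assume $u\equiv 0$. Then $u_n\to 0$ in $L^q$ for every $q\ge N$ and, by the same equi-integrability of the exponential part, $\int_{\mathbb{R}^N}|x|^{-\beta}F(x,u_n)\,dx\to 0$, so $\frac1N\|u_n\|_E^N\to c_M$, i.e. $\|u_n\|_E^N\to Nc_M<NM_c$. Choosing $q>1$ with $q\alpha_0(Nc_M)^{1/(N-1)}<(1-\beta/N)\alpha_N$, we get $q\alpha_0\|u_n\|_E^{N/(N-1)}<(1-\beta/N)\alpha_N$ for large $n$, so Lemma~1.1 applied to $u_n/\|u_n\|_E$ bounds $\int_{\mathbb{R}^N}|x|^{-\beta}\left[\exp(q\alpha_0|u_n|^{N/(N-1)})-S_{N-2}(q\alpha_0,u_n)\right]dx$ uniformly in $n$. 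Dominating $f(x,u_n)u_n$ by $b_1|u_n|^N+b_2|u_n|\left[\exp(\alpha_0|u_n|^{N/(N-1)})-S_{N-2}(\alpha_0,u_n)\right]$ via $(f1)$ and applying Hölder's inequality with the weight split as $|x|^{-\beta}=|x|^{-\beta/q}|x|^{-\beta/q'}$ (using $u_n\to 0$ in the relevant $L^s$ spaces), one obtains $\int_{\mathbb{R}^N}|x|^{-\beta}f(x,u_n)u_n\,dx\to 0$. But $DJ(u_n)u_n\to 0$ means $\|u_n\|_E^N-\int_{\mathbb{R}^N}|x|^{-\beta}f(x,u_n)u_n\,dx\to 0$, hence $\|u_n\|_E^N\to 0$, contradicting $\|u_n\|_E^N\to Nc_M>0$. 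Therefore $u\not\equiv 0$ and $u$ is the desired nontrivial weak solution. The two steps I expect to demand the most care are the test-function computation underlying $(f5)$ and $c_M<M_c$, and the Hölder bookkeeping with the singular weight in the last step, where the slack $q>1$ is exactly what is used to kill $\int_{\mathbb{R}^N}|x|^{-\beta}f(x,u_n)u_n$.
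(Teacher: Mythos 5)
Your proof takes essentially the same route as the paper's. The mountain-pass geometry and the level estimate $c_M<\tfrac1N\bigl(\tfrac{N-\beta}{N}\tfrac{\alpha_N}{\alpha_0}\bigr)^{N-1}$ via the Moser functions and $(f5)$ correspond to Lemmas~3.4--3.6; the weak limit of the Cerami (or Palais--Smale) sequence being a weak solution is Lemma~4.1; and the nontriviality argument — assume $u\equiv0$, deduce $\int |x|^{-\beta}F(x,u_n)\to 0$ and hence $\|u_n\|_E^N\to Nc_M$, use the sub-threshold bound on $\|u_n\|_E$ together with the singular Moser--Trudinger inequality and Hölder to kill $\int|x|^{-\beta}f(x,u_n)u_n$, and contradict $c_M>0$ — is precisely the paper's Section~6 argument, with your Hölder splitting $|x|^{-\beta}=|x|^{-\beta/q}|x|^{-\beta/q'}$ playing the role of the paper's Lemma~3.2.

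One place where you are slightly more casual than the paper, worth flagging: you assert that $\int_{\mathbb{R}^N}|x|^{-\beta}F(x,u_n)\,dx\to 0$ ``by the same equi-integrability of the exponential part,'' but equi-integrability of the exponential only controls the integral on bounded sets; the tail near infinity still needs an argument. The paper handles the tail via the ``double truncation'' estimate from Proposition~5.2 (splitting in both $|x|>R$ vs.\ $|x|\le R$ and $|u_n|>A$ vs.\ $|u_n|\le A$, using $\int|x|^{-\beta}f(x,u_n)u_n\le C$ and $u_n\to 0$ in $L^q$ for $q\ge N$), precisely because the compact embedding is only available for $q\ge N$. In the special case $u\equiv 0$ this is not hard, but it should be stated rather than waved at; once you do so, your proof coincides with the paper's.
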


As we remarked earlier in the introduction, all the main theorems above remain
to hold when the nonlinear term $f$ does not satisfy the Ambrosetti-Rabinowitz
condition. As a result, we then establish the existence and multiplicity of
solutions in a wider class of nonlinear terms. See Section 7 for more details.

\section{Preliminary Results}

First, we recall what we call the Radial Lemma (see \cite{BL, DoMS}) which
asserts:%
\[
\left\vert u(x)\right\vert ^{N}\leq\frac{N}{\omega_{N-1}}\frac{\left\Vert
u\right\Vert _{N}^{N}}{\left\vert x\right\vert ^{N}},\forall x\in
\mathbb{R}^{N}\setminus\left\{  0\right\}
\]
for all $u\in W^{1,N}\left(  \mathbb{R}^{N}\right)  $ radially
symmetric. Using this Radial Lemma, we can prove  the following two
lemmas (Lemmas \ref{lem3.1} and \ref{lem3.2}) with an easy
adaptation from Lemma 2.2 and Lemma 2.3 in \cite{DoMS} for $\beta=0$
and Lemma 4.2 in \cite{AY}.

\begin{lemma}\label{lem3.1}
For $\kappa>0,~0\leq\beta<N$ and $\left\Vert u\right\Vert _{E}\leq M$ with $M$
sufficiently small and $q>N$, we have
\[
\int_{\mathbb{R}^{N}}\frac{\left[  \exp\left(  \kappa\left\vert u\right\vert
^{N/(N-1)}\right)  -S_{N-2}\left(  \kappa,u\right)  \right]  \left\vert
u\right\vert ^{q}}{\left\vert x\right\vert ^{\beta}}dx\leq C\left(
N,\kappa\right)  \left\Vert u\right\Vert _{E}^{q}.
\]

\end{lemma}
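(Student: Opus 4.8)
The plan is to reduce the estimate to the singular Moser--Trudinger inequality of Lemma 1.1 by means of H\"older's inequality, using also the continuous embedding $E\hookrightarrow L^{p}(\mathbb{R}^{N})$ for $p\ge N$ from $(V1)$. Since $0\le\beta<N$, fix $\ell>1$ so close to $1$ that $\ell\beta<N$, and let $\ell'=\ell/(\ell-1)$. Keeping the singular weight with the exponential factor and applying H\"older with exponents $\ell,\ell'$,
\[
\int_{\mathbb{R}^{N}}\frac{\bigl[\exp(\kappa|u|^{N/(N-1)})-S_{N-2}(\kappa,u)\bigr]|u|^{q}}{|x|^{\beta}}\,dx\le\Bigl(\int_{\mathbb{R}^{N}}\frac{\bigl[\exp(\kappa|u|^{N/(N-1)})-S_{N-2}(\kappa,u)\bigr]^{\ell}}{|x|^{\beta\ell}}\,dx\Bigr)^{1/\ell}\Bigl(\int_{\mathbb{R}^{N}}|u|^{q\ell'}\,dx\Bigr)^{1/\ell'}.
\]
The second factor equals $\|u\|_{L^{q\ell'}}^{q}$, and since $q\ell'>q>N$, $(V1)$ yields $\|u\|_{L^{q\ell'}}\le C\|u\|_{E}$, so it is bounded by $C\|u\|_{E}^{q}$.

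For the first factor I would use an elementary power inequality for $\phi(t):=e^{t}-\sum_{k=0}^{N-2}t^{k}/k!=\sum_{k\ge N-1}t^{k}/k!$, namely that $\phi$ is nonnegative and nondecreasing on $[0,\infty)$ and that $\phi(t)^{m}\le\phi(mt)$ for every integer $m\ge1$ (compare Taylor coefficients: the coefficient of $t^{k}$ in $\phi(t)^{m}$ equals $\tfrac{1}{k!}$ times a subsum of multinomial coefficients whose total is $m^{k}$). Hence, for the real exponent $\ell\ge1$ and $m=\lceil\ell\rceil$, one gets $\phi(t)^{\ell}\le\phi(mt)$ by treating the cases $\phi(t)\le1$ and $\phi(t)>1$ separately. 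With $t=\kappa|u|^{N/(N-1)}$ this reads
\[
\bigl[\exp(\kappa|u|^{N/(N-1)})-S_{N-2}(\kappa,u)\bigr]^{\ell}\le\exp\bigl(m\kappa|u|^{N/(N-1)}\bigr)-S_{N-2}(m\kappa,u).
\]
It then suffices to bound $\int_{\mathbb{R}^{N}}|x|^{-\beta\ell}\bigl[\exp(m\kappa|u|^{N/(N-1)})-S_{N-2}(m\kappa,u)\bigr]\,dx$. Because $h_{1}\ge1$ and $V\ge V_{0}$, one has $\|u\|_{1,\tau}\le\|u\|_{E}\le M$ with $\tau=V_{0}/(k_{0}N)>0$; setting $u=\|u\|_{1,\tau}v$, so that $\|v\|_{1,\tau}\le1$, gives $m\kappa|u|^{N/(N-1)}\le m\kappa M^{N/(N-1)}|v|^{N/(N-1)}$. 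Choosing $M$ small enough that $m\kappa M^{N/(N-1)}\le\bigl(1-\tfrac{\beta\ell}{N}\bigr)\alpha_{N}$, the monotonicity of $\phi$ and Lemma 1.1 (applicable since $\beta\ell<N$, precisely at the critical exponent $(1-\beta\ell/N)\alpha_{N}$) bound this integral by a constant $C=C(N,\kappa)$. Thus the first factor is $\le C(N,\kappa)^{1/\ell}$, and multiplying the two bounds gives the assertion.

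The only delicate point is the compatible choice of parameters: $\ell$ must be close to $1$ so that $\beta\ell<N$ (which is also what keeps the Moser--Trudinger threshold $(1-\beta\ell/N)\alpha_{N}$ strictly positive), after which $M$ must be taken small depending on $m=\lceil\ell\rceil$ and $\kappa$ so that $m\kappa M^{N/(N-1)}$ stays below that threshold. This is merely a bookkeeping of constants; the substantive input beyond H\"older's inequality, the embedding of $(V1)$, and Lemma 1.1 is the elementary inequality $\phi(t)^{\ell}\le\phi(\lceil\ell\rceil\,t)$. The Radial Lemma plays no role in this particular estimate.
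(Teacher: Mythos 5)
Your proof is correct, and it departs from the paper's intended route. The paper introduces the Radial Lemma $|u^*(x)|^N\le\frac{N}{\omega_{N-1}}\|u\|_N^N/|x|^N$ immediately before this statement and says Lemmas 3.1--3.2 follow from it by adapting Lemma 2.2--2.3 of do \'O--Medeiros--Severo and Lemma 4.2 of Adimurthi--Yang; that route uses the pointwise radial decay (after Schwarz symmetrization, which is admissible here because $t\mapsto\phi(\kappa t^{N/(N-1)})t^q$ is increasing, where $\phi(t)=e^t-\sum_{k=0}^{N-2}t^k/k!$, and $|x|^{-\beta}$ is its own decreasing rearrangement) to control the far-field part of the integral, leaving only a local Moser--Trudinger estimate near the origin. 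You bypass the Radial Lemma entirely: a single H\"older split with conjugate exponents $\ell,\ell'$ and $\ell>1$ so close to $1$ that $\ell\beta<N$; the combinatorial inequality $\phi(t)^\ell\le\phi(\lceil\ell\rceil t)$, which you correctly justify via Taylor coefficients against the restricted multinomial sum and by handling $\phi(t)\le1$ and $\phi(t)>1$ separately; Lemma 1.1 at the threshold $(1-\ell\beta/N)\alpha_N$ applied to $v=u/\|u\|_{1,\tau}$ with $\tau=V_0/(k_0N)$; and the embedding $E\hookrightarrow L^{q\ell'}(\mathbb{R}^N)$ for the second H\"older factor. This is cleaner and self-contained, avoiding rearrangements altogether, at the modest price of needing $M$ small enough that $\lceil\ell\rceil\kappa M^{N/(N-1)}\le(1-\ell\beta/N)\alpha_N$, and of producing a constant that in truth also depends on $\beta$, $q$, $V_0$, $k_0$ rather than only on $N,\kappa$ --- but that imprecision is already present in the lemma as stated.
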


\begin{lemma}\label{lem3.2}
Let $\kappa>0,~0\leq\beta<N,~u\in E$ and $\left\Vert u\right\Vert _{E}\leq M$
such that $M^{N/(N-1)}<\left(  1-\frac{\beta}{N}\right)  \frac{\alpha_{N}%
}{\kappa}$, then
\[
\int_{\mathbb{R}^{N}}\frac{\left[  \exp\left(  \kappa\left\vert u\right\vert
^{N/(N-1)}\right)  -S_{N-2}\left(  \kappa,u\right)  \right]  \left\vert
u\right\vert }{\left\vert x\right\vert ^{\beta}}dx\leq C\left(  N,M,\kappa
\right)  \left\Vert u\right\Vert _{p^{\prime}}%
\]
for some $p^{\prime}>N$.
\end{lemma}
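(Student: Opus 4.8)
The plan is to estimate the left-hand side by a single Hölder splitting and then invoke the sharp singular Trudinger--Moser inequality (Lemma 1.1), after recording an elementary pointwise inequality for the truncated exponential. We may assume $u\not\equiv 0$ and normalize $v:=u/\left\Vert u\right\Vert _{E}$, so $\left\Vert v\right\Vert _{E}=1$; since $h_{1}\geq 1$ and $V\geq V_{0}>0$, there is a fixed $\tau_{0}>0$ with $\left\Vert v\right\Vert _{1,\tau_{0}}\leq\left\Vert v\right\Vert _{E}=1$. We will also use the elementary inequality (both sides vanish at $t=0$ and a derivative comparison shows the left side grows no faster): for all $r\geq 1$ and $t\geq 0$,
\[
\Bigl(e^{t}-\sum_{k=0}^{N-2}\tfrac{t^{k}}{k!}\Bigr)^{r}\leq e^{rt}-\sum_{k=0}^{N-2}\tfrac{(rt)^{k}}{k!},
\]
which, with $t=\kappa\left\vert u\right\vert ^{N/(N-1)}$, yields pointwise
\[
\bigl[\exp(\kappa\left\vert u\right\vert ^{N/(N-1)})-S_{N-2}(\kappa,u)\bigr]^{r}\leq\exp(r\kappa\left\vert u\right\vert ^{N/(N-1)})-S_{N-2}(r\kappa,u).
\]

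I would then apply Hölder's inequality with exponents $r$ and $r'=r/(r-1)$, keeping the full weight $\left\vert x\right\vert ^{-\beta}$ with the exponential factor:
\[
\int_{\mathbb{R}^{N}}\frac{\bigl[\exp(\kappa\left\vert u\right\vert ^{N/(N-1)})-S_{N-2}(\kappa,u)\bigr]\left\vert u\right\vert }{\left\vert x\right\vert ^{\beta}}\,dx\leq\Bigl(\int_{\mathbb{R}^{N}}\frac{\bigl[\exp(\kappa\left\vert u\right\vert ^{N/(N-1)})-S_{N-2}(\kappa,u)\bigr]^{r}}{\left\vert x\right\vert ^{\beta r}}\,dx\Bigr)^{1/r}\left\Vert u\right\Vert _{r'}.
\]
The decisive choice is that of $r$: pick $r>1$ so close to $1$ that simultaneously $r<N/(N-1)$ (hence $r'>N$), $\beta r<N$, and $r\kappa M^{N/(N-1)}\leq\bigl(1-\tfrac{\beta r}{N}\bigr)\alpha_{N}$. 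All three hold at $r=1$ --- the last strictly, by the hypothesis $\kappa M^{N/(N-1)}<(1-\tfrac{\beta}{N})\alpha_{N}$, the middle one since $\beta<N$ --- and hence persist for $r$ slightly larger. With this $r$, set $\bar\alpha:=r\kappa M^{N/(N-1)}$ and $\alpha:=r\kappa\left\Vert u\right\Vert _{E}^{N/(N-1)}\leq\bar\alpha$. Writing $u=\left\Vert u\right\Vert _{E}v$ one has $\exp(r\kappa\left\vert u\right\vert ^{N/(N-1)})-S_{N-2}(r\kappa,u)=\exp(\alpha\left\vert v\right\vert ^{N/(N-1)})-S_{N-2}(\alpha,v)$, and comparing the two power series term by term (since $\alpha\leq\bar\alpha$) bounds this by $\exp(\bar\alpha\left\vert v\right\vert ^{N/(N-1)})-S_{N-2}(\bar\alpha,v)$. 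Combined with the pointwise inequality above, the first Hölder factor is therefore at most
\[
\Bigl(\,\underset{\left\Vert w\right\Vert _{1,\tau_{0}}\leq 1}{\sup}\int_{\mathbb{R}^{N}}\frac{\exp(\bar\alpha\left\vert w\right\vert ^{N/(N-1)})-S_{N-2}(\bar\alpha,w)}{\left\vert x\right\vert ^{\beta r}}\,dx\Bigr)^{1/r},
\]
which is finite by Lemma 1.1 with singularity exponent $\beta r\in[0,N)$ and $\bar\alpha\leq(1-\tfrac{\beta r}{N})\alpha_{N}$. Hence the original integral is $\leq C(N,M,\kappa)\left\Vert u\right\Vert _{r'}$, and $p':=r'>N$ gives the claim. (That $\left\Vert u\right\Vert _{r'}<\infty$ is automatic, since $u\in E\hookrightarrow W^{1,N}(\mathbb{R}^{N})\hookrightarrow L^{r'}(\mathbb{R}^{N})$ by $(V1)$.)

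I expect the only real obstacle to be the bookkeeping in choosing $r$, namely checking that the three constraints $r<N/(N-1)$, $\beta r<N$ and $r\kappa M^{N/(N-1)}\leq(1-\beta r/N)\alpha_{N}$ are compatible for $r$ near $1$; this is exactly where the \emph{strict} inequality $M^{N/(N-1)}<(1-\tfrac{\beta}{N})\alpha_{N}/\kappa$ is used. Everything else is a routine adaptation of Lemma 2.3 of \cite{DoMS} (the case $\beta=0$) to the singular weight, in the same spirit as Lemma \ref{lem3.1}; if a still lower Lebesgue exponent were wanted on the right one could instead split $\mathbb{R}^{N}$ into the unit ball $B_{1}$ and its complement and apply Hölder once more on $B_{1}$ against $\left\vert x\right\vert ^{-\beta m}$ with $\beta m<N$, but this refinement is not needed here.
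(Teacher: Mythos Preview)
Your argument is correct. The Hölder split with the full weight $|x|^{-\beta}$ placed on the exponential factor, the choice of $r>1$ close to $1$ so that simultaneously $r'>N$, $\beta r<N$, and $r\kappa M^{N/(N-1)}\le(1-\beta r/N)\alpha_N$, and the subsequent appeal to Lemma~1.1 with singularity exponent $\beta r$ all work exactly as you describe. The pointwise inequality $R(t)^r\le R(rt)$ for the truncated exponential $R(t)=e^t-\sum_{k=0}^{N-2}t^k/k!$ is indeed elementary: since $R'(s)-R(s)=s^{N-2}/(N-2)!\ge 0$ and $R(s)<e^s$, one has $sR'(s)\ge sR(s)\ge R(s)\log R(s)$, which is precisely the differential inequality guaranteeing that $r\mapsto R(rt)^{1/r}$ is nondecreasing.

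Your route differs from the one the paper points to. The paper does not write out a proof but directs the reader to adapt Lemma~2.3 of \cite{DoMS} and Lemma~4.2 of \cite{AY} \emph{via the Radial Lemma}: that argument passes to the symmetric decreasing rearrangement, splits $\mathbb{R}^N$ into a ball and its complement, uses the pointwise radial decay $|u^*(x)|\le C\|u\|_N/|x|$ to control the exponential at infinity, and invokes the (singular) Moser--Trudinger inequality on the ball. Your approach bypasses symmetrization and the inside/outside decomposition entirely by exploiting Lemma~1.1 on all of $\mathbb{R}^N$ directly; this is shorter and more transparent, at the cost of relying on the full strength of the Adimurthi--Yang inequality (which the paper has already quoted anyway). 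Either way one arrives at the same conclusion with $p'=r'>N$.
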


Next, we have

\begin{lemma}
Let $\left\{  w_{k}\right\}  \subset W^{1,N}\left(  \Omega\right)  $ where
$\Omega$ is a bounded open set in $\mathbb{R}^{N},~\left\Vert \nabla
w_{k}\right\Vert _{L^{N}\left(  \Omega\right)  }\leq1$. If $w_{k}\rightarrow
w\neq0$ weakly and almost everywhere, $\nabla w_{k}\rightarrow\nabla w$ almost
everywhere, then $\frac{\exp\left\{  \alpha\left\vert w_{k}\right\vert
^{N/(N-1)}\right\}  }{\left\vert x\right\vert ^{\beta}}$ is bounded in
$L^{1}\left(  \Omega\right)  $ for $0<\alpha<\left(  1-\frac{\beta}{N}\right)
\alpha_{N}\left(  1-\left\Vert \nabla w\right\Vert _{L^{N}\left(
\Omega\right)  }^{N}\right)  ^{-1/(N-1)}.$
\end{lemma}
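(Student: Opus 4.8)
The plan is to run the concentration-compactness argument of Lions \cite{L} in the presence of the weight $|x|^{-\beta}$; the two external inputs are the Brezis--Lieb lemma and the singular Moser--Trudinger inequality of Adimurthi and Sandeep \cite{AS} on bounded domains.

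\emph{First, isolate the weak limit.} Put $v_{k}:=w_{k}-w$ and $t_{k}:=\left\Vert \nabla v_{k}\right\Vert _{L^{N}(\Omega)}$. Since $\{\nabla w_{k}\}$ is bounded in $L^{N}(\Omega)$ and $\nabla w_{k}\to\nabla w$ a.e., the Brezis--Lieb lemma gives $\left\Vert \nabla w_{k}\right\Vert _{L^{N}}^{N}=t_{k}^{N}+\left\Vert \nabla w\right\Vert _{L^{N}}^{N}+o(1)$, and together with $\left\Vert \nabla w_{k}\right\Vert _{L^{N}}\le1$ this yields $\limsup_{k}t_{k}^{N}\le\ell:=1-\left\Vert \nabla w\right\Vert _{L^{N}(\Omega)}^{N}$. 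This is exactly the point where the hypothesis $\nabla w_{k}\to\nabla w$ a.e.\ is used. (If $\ell=0$ then $t_{k}\to0$ and the rest goes through with $\ell$ replaced by any small positive number, so we may assume $\ell>0$.)

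\emph{Second, split the exponential.} Given $\alpha<(1-\tfrac{\beta}{N})\alpha_{N}\,\ell^{-1/(N-1)}$, i.e.\ $\alpha\,\ell^{1/(N-1)}<(1-\tfrac{\beta}{N})\alpha_{N}$, choose by continuity $q>1$, $\epsilon>0$ and $\eta>\ell$ close to $\ell$ with
\[
\theta:=\alpha\,q\,(1+\epsilon)\,\eta^{1/(N-1)}<\Big(1-\tfrac{\beta}{N}\Big)\alpha_{N};
\]
by the first step $t_{k}^{N}\le\eta$ for all large $k$. From the elementary inequality $(a+b)^{N/(N-1)}\le(1+\epsilon)a^{N/(N-1)}+C_{\epsilon}b^{N/(N-1)}$ (with $a=|v_{k}|$, $b=|w|$) one gets
\[
\exp\!\big(\alpha|w_{k}|^{N/(N-1)}\big)\le\exp\!\big(\alpha(1+\epsilon)|v_{k}|^{N/(N-1)}\big)\cdot\exp\!\big(\alpha C_{\epsilon}|w|^{N/(N-1)}\big),
\]
and then Hölder's inequality with exponents $q,q'$ together with the splitting $|x|^{-\beta}=|x|^{-\beta/q}|x|^{-\beta/q'}$ gives
\[
\int_{\Omega}\frac{\exp\!\big(\alpha|w_{k}|^{N/(N-1)}\big)}{|x|^{\beta}}dx\le\left(\int_{\Omega}\frac{\exp\!\big(\alpha q(1+\epsilon)|v_{k}|^{N/(N-1)}\big)}{|x|^{\beta}}dx\right)^{\!1/q}\left(\int_{\Omega}\frac{\exp\!\big(\alpha q'C_{\epsilon}|w|^{N/(N-1)}\big)}{|x|^{\beta}}dx\right)^{\!1/q'}.
\]

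\emph{Third, estimate the two factors.} For the first factor, write $|v_{k}|^{N/(N-1)}=t_{k}^{N/(N-1)}|v_{k}/t_{k}|^{N/(N-1)}$ when $t_{k}>0$ (the case $t_{k}=0$ is trivial); since $\alpha q(1+\epsilon)t_{k}^{N/(N-1)}\le\theta<(1-\tfrac{\beta}{N})\alpha_{N}$ and $\left\Vert \nabla(v_{k}/t_{k})\right\Vert _{L^{N}(\Omega)}=1$, the singular Moser--Trudinger inequality on the bounded domain $\Omega$ bounds this factor by a constant depending only on $N,\beta,\Omega$, uniformly in $k$. The second factor is independent of $k$ and finite: with $\gamma:=\alpha q'C_{\epsilon}$, split $\exp(\gamma|w|^{N/(N-1)})=[\exp(\gamma|w|^{N/(N-1)})-S_{N-2}(\gamma,w)]+S_{N-2}(\gamma,w)$; the polynomial part lies in $L^{1}(\Omega,|x|^{-\beta}dx)$ by Hölder (as $w\in L^{s}(\Omega)$ for every $s<\infty$ and $|x|^{-\beta}\in L^{r}(\Omega)$ for $r$ slightly above $1$, since $\beta<N$), and the entire part is controlled, after a bounded extension of $w$ to $W^{1,N}(\mathbb{R}^{N})$, by the global inequality in Lemma 1.1, which holds for every positive exponent. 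Combining the two bounds and treating the finitely many small $k$ directly gives $\sup_{k}\int_{\Omega}|x|^{-\beta}\exp(\alpha|w_{k}|^{N/(N-1)})\,dx<\infty$.

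The delicate point is the second step: the auxiliary parameters $q>1$ and $\epsilon>0$ must be chosen so that $\alpha q(1+\epsilon)\,(\limsup_{k}t_{k}^{N})^{1/(N-1)}$ stays \emph{strictly} below the critical constant $(1-\tfrac{\beta}{N})\alpha_{N}$. This is possible precisely because of the strict inequality assumed on $\alpha$, and it is here that the \emph{improved} exponent $\big(1-\left\Vert \nabla w\right\Vert _{L^{N}(\Omega)}^{N}\big)^{-1/(N-1)}$ — rather than the plain critical one — appears.
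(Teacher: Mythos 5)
Your argument is correct and follows the same strategy as the paper's (very terse) proof: Brezis--Lieb to estimate $\limsup_k\|\nabla(w_k-w)\|_{L^N}^N\le 1-\|\nabla w\|_{L^N}^N$, then a decomposition of $|w_k|^{N/(N-1)}$ into a piece involving $w_k-w$ and a piece involving $w$, a H\"older split of the weight, and the singular Trudinger--Moser inequality of Adimurthi--Sandeep on $\Omega$ for the $k$-dependent factor. What you add is exactly what the paper leaves implicit: the elementary inequality $(a+b)^{N/(N-1)}\le(1+\epsilon)a^{N/(N-1)}+C_\epsilon b^{N/(N-1)}$, the explicit H\"older exponents $q,q'$ applied to the weight $|x|^{-\beta}$, the tracking of the composite constant $q(1+\epsilon)\eta^{1/(N-1)}$ below the critical threshold, and the treatment of the fixed-$w$ factor via the global Lemma 1.1 plus a Sobolev argument for the polynomial part. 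That is a faithful and complete filling-in of the paper's three-line sketch, so I would accept it as correct and essentially the same proof.

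One shared imprecision worth being aware of (it is present in the paper just as much as in your write-up): the Adimurthi--Sandeep inequality as quoted is a statement about $W_0^{1,N}(\Omega)$, while the lemma as stated only puts $w_k\in W^{1,N}(\Omega)$, so the normalized function $v_k/t_k$ need not lie in $W_0^{1,N}(\Omega)$. In the place where the lemma is actually used (Proposition 5.2, with $\Omega=B_R$), the functions are restrictions to $B_R$ of elements of $W^{1,N}(\mathbb{R}^N)$, and the bound should really be obtained from a global inequality, or one must record that the sup in the singular Moser--Trudinger inequality remains finite under the strictly subcritical exponent you are using, for functions with gradient small in $L^N$ but without zero trace. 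Since you are reproducing the paper's step verbatim here, this is not a defect in your proof relative to theirs, but flagging it would make your write-up strictly better than the source.
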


\begin{proof}
Using the Brezis-Lieb Lemma in \cite{BL}, we deduce that
\[
\left\Vert \nabla w_{k}\right\Vert _{L^{N}\left(  \Omega\right)  }%
^{N}-\left\Vert \nabla w_{k}-\nabla w\right\Vert _{L^{N}\left(  \Omega\right)
}^{N}\rightarrow\left\Vert \nabla w\right\Vert _{L^{N}\left(  \Omega\right)
}^{N}.
\]
Thus for $k$ large enough and $\delta>0$ small enough:
\[
0<\alpha\left(  1+\delta\right)  \left\Vert \nabla w_{k}-\nabla w\right\Vert
_{L^{N}\left(  \Omega\right)  }^{N/(N-1)}<\alpha_{N}\left(  1-\frac{\beta}%
{N}\right)  .
\]
By the singular Trudinger-Moser inequality on bounded domains \cite{AS}, we
get the conclusion.
\end{proof}

In the next two lemmas we check that the functional $J_{\varepsilon}$
satisfies the geometric conditions of the mountain-pass theorem. Then, we are
going to use a mountain-pass theorem without a compactness condition such as
the one of the (PS) type to prove the existence of the solution. This version
of the mountain-pass theorem is a consequence of the Ekeland's variational principle.

\begin{lemma}
Suppose that $(V1)$, $(f1)$ and $(f4)$ hold. Then there exists $\varepsilon
_{1}>0$ such that for $0<\varepsilon<\varepsilon_{1}$, there exists
$\rho_{\varepsilon}>0$ such that $J_{\varepsilon}(u)>0$ if $\left\Vert
u\right\Vert _{E}=\rho_{\varepsilon}$. Furthermore, $\rho_{\varepsilon}$ can
be chosen such that $\rho_{\varepsilon}\rightarrow0$ as $\varepsilon
\rightarrow0$.
\end{lemma}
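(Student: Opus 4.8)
The plan is to verify the mountain-pass geometry near the origin for $J_\varepsilon$ by obtaining a lower bound of the form $J_\varepsilon(u) \ge \|u\|_E^N\bigl(C_1 - C_2\|u\|_E^{q-N}\bigr) - C_3\varepsilon\|u\|_E$ on a small sphere $\|u\|_E = \rho$, and then choosing $\rho$ and $\varepsilon$ small enough so the right side is positive.

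First I would estimate the "good" part of $J_\varepsilon(u)$ from below. By $(A3)$ and $(A4)$ we have $\int A(x,\nabla u)\,dx \ge \int a(x,\nabla u)\cdot\nabla u\,dx /N \ge 0$, and more importantly $\int A(x,\nabla u)\,dx \ge k_0\int h_1(x)|\nabla u|^N\,dx$; combined with the $V$-term this gives $\int A(x,\nabla u)\,dx + \tfrac1N\int V|u|^N\,dx \ge k_0\|u\|_E^N$ (recall $\|u\|_E^N = \int h_1|\nabla u|^N + \tfrac1{k_0 N}\int V|u|^N$). Next I would control the nonlinearity: condition $(f4)$ gives, for some $\eta>0$ and some $\tau_0>0$, the bound $F(x,s) \le (\lambda_1(N)-\eta)k_0|s|^N$ for $|s|\le\tau_0$, while $(f1)$ handles large $s$; putting these together yields, for any $q>N$ and any fixed exponent $\alpha_1$ slightly above $\alpha_0$,
\[
F(x,u) \le (\lambda_1(N)-\eta)k_0|u|^N + C\bigl[\exp(\alpha_1|u|^{N/(N-1)}) - S_{N-2}(\alpha_1,u)\bigr]|u|^q .
\]
Dividing by $|x|^\beta$, integrating, using the definition of $\lambda_1(N)$ for the first term (so $\int \tfrac{|u|^N}{|x|^\beta} \le \|u\|_E^N/\lambda_1(N)$), and applying Lemma \ref{lem3.1} to the second term (valid once $\|u\|_E \le M$ is small enough), I obtain
\[
\int_{\mathbb{R}^N}\frac{F(x,u)}{|x|^\beta}\,dx \le \Bigl(1 - \frac{\eta}{\lambda_1(N)}\Bigr)k_0\|u\|_E^N + C(N,\kappa)\,\|u\|_E^q .
\]
Finally the perturbation term is bounded by $\varepsilon|\int hu\,dx| \le \varepsilon\|h\|_* \|u\|_{W^{1,N}} \le C\varepsilon\|h\|_*\|u\|_E$ using $(V1)$ and the continuous embedding $E\hookrightarrow W^{1,N}(\mathbb{R}^N)$.

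Combining, on $\|u\|_E = \rho$ (with $\rho$ small enough that Lemma \ref{lem3.1} applies) I get $J_\varepsilon(u) \ge \tfrac{\eta}{\lambda_1(N)}k_0\,\rho^N - C\rho^q - C\varepsilon\|h\|_*\rho = \rho\bigl(\tfrac{\eta k_0}{\lambda_1(N)}\rho^{N-1} - C\rho^{q-1} - C\varepsilon\|h\|_*\bigr)$. Since $q-1>N-1$, there is $\rho_0>0$ with $g(\rho):=\tfrac{\eta k_0}{\lambda_1(N)}\rho^{N-1} - C\rho^{q-1} > 0$ for $0<\rho\le\rho_0$; pick any such $\rho$, say the maximizer, call the value $\delta_\rho>0$, and set $\varepsilon_1 = \delta_\rho/(2C\|h\|_*)$. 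Then for $0<\varepsilon<\varepsilon_1$ we have $J_\varepsilon(u) > 0$ on $\|u\|_E=\rho$. To get $\rho_\varepsilon \to 0$ as $\varepsilon\to 0$, instead of fixing $\rho$ I would choose $\rho_\varepsilon$ as a function of $\varepsilon$ tending to $0$: e.g. require $C\varepsilon\|h\|_* \le \tfrac12\cdot\tfrac{\eta k_0}{\lambda_1(N)}\rho_\varepsilon^{N-1}$ and $C\rho_\varepsilon^{q-1}\le\tfrac14\cdot\tfrac{\eta k_0}{\lambda_1(N)}\rho_\varepsilon^{N-1}$; the second forces an upper bound on $\rho_\varepsilon$ and the first a lower bound $\rho_\varepsilon \gtrsim \varepsilon^{1/(N-1)}$, and any choice in that (nonempty, for $\varepsilon$ small) window works and goes to $0$.

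The main obstacle is bookkeeping the interplay of the three smallness requirements — that $\|u\|_E\le M$ be small enough for Lemma \ref{lem3.1}, that the polynomial term $C\rho^{q-1}$ be dominated by the quadratic-type term $\rho^{N-1}$, and that the $\varepsilon$-term be dominated as well — while simultaneously arranging $\rho_\varepsilon\to 0$; this is purely a matter of ordering the choices ($\eta$ from $(f4)$, then $M$ from Lemma \ref{lem3.1}, then $\rho_\varepsilon$, then $\varepsilon_1$) and there is no real analytic difficulty once $(f4)$ is used to absorb the leading-order term into the norm via $\lambda_1(N)$. One should also double-check that $f$ is defined to vanish for $u\le 0$ (see \eqref{2.1}), so that $F(x,u)\ge 0$ and these one-sided bounds are legitimate on all of $E$.
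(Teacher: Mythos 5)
Your proposal is correct and follows essentially the same route as the paper's proof of Lemma 3.4: split $F$ via $(f4)$ for small $s$ and $(f1)$ for large $s$, absorb the $|u|^N$ contribution using the definition of $\lambda_1(N)$, control the exponential term by Lemma \ref{lem3.1}, use $(A4)$ for the coercive lower bound $J_\varepsilon(u)\ge k_0\|u\|_E^N-\cdots$, then factor out $\|u\|_E$ and tune $\rho$ and $\varepsilon$. Your concluding paragraph merely makes explicit the elementary bookkeeping that the paper leaves implicit when asserting $\rho_\varepsilon\to 0$.
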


\begin{proof}
From $(f4)$, there exist $\tau,~\delta>0$ such that $\left\vert u\right\vert
\leq\delta$ implies
\begin{equation}
F(x,u)\leq k_{0}\left(  \lambda_{1}\left(  N\right)  -\tau\right)  \left\vert
u\right\vert ^{N} \label{3.1}%
\end{equation}
for all $x\in\mathbb{R}^{N}$. Moreover, using $(f1)$ for each $q>N$, we can
find a constant $C=C(q,\delta)$ such that
\begin{equation}
F(x,u)\leq C\left\vert u\right\vert ^{q}\left[  \exp\left(  \kappa\left\vert
u\right\vert ^{N/(N-1)}\right)  -S_{N-2}\left(  \kappa,u\right)  \right]
\label{3.2}%
\end{equation}
for $\left\vert u\right\vert \geq\delta$ and $x\in\mathbb{R}^{N}$. From
(\ref{3.1}) and (\ref{3.2}) we have
\[
F(x,u)\leq k_{0}\left(  \lambda_{1}\left(  N\right)  -\tau\right)  \left\vert
u\right\vert ^{N}+C\left\vert u\right\vert ^{q}\left[  \exp\left(
\kappa\left\vert u\right\vert ^{N/(N-1)}\right)  -S_{N-2}\left(
\kappa,u\right)  \right]
\]
for all $\left(  x,u\right)  \in\mathbb{R}^{N}\times%
\mathbb{R}
$. Now, by $(A4)$, Lemma 3.2, (\ref{2.3}) and the continuous embedding
$E\hookrightarrow L^{N}\left(  \mathbb{R}^{N}\right)  $, we obtain
\begin{align*}
J_{\varepsilon}(u)  &  \geq k_{0}\left\Vert u\right\Vert _{E}^{N}-k_{0}\left(
\lambda_{1}\left(  N\right)  -\tau\right)  \int_{\mathbb{R}^{N}}%
\frac{\left\vert u\right\vert ^{N}}{\left\vert x\right\vert ^{\beta}%
}dx-C\left\Vert u\right\Vert _{E}^{q}-\varepsilon\left\Vert h\right\Vert
_{\ast}\left\Vert u\right\Vert _{E}\\
&  \geq k_{0}\left(  1-\frac{\left(  \lambda_{1}\left(  N\right)
-\tau\right)  }{\lambda_{1}\left(  N\right)  }\right)  \left\Vert u\right\Vert
_{E}^{N}-C\left\Vert u\right\Vert _{E}^{q}-\varepsilon\left\Vert h\right\Vert
_{\ast}\left\Vert u\right\Vert _{E}%
\end{align*}
Thus
\begin{equation}
J_{\varepsilon}(u)\geq\left\Vert u\right\Vert _{E}\left[  k_{0}\left(
1-\frac{\left(  \lambda_{1}\left(  N\right)  -\tau\right)  }{\lambda
_{1}\left(  N\right)  }\right)  \left\Vert u\right\Vert _{E}^{N-1}-C\left\Vert
u\right\Vert _{E}^{q-1}-\varepsilon\left\Vert h\right\Vert _{\ast}\right]
\label{3.3}%
\end{equation}
Since $\tau>0$ and $q>N$, we may choose $\rho>0$ such that $k_{0}\left(
1-\frac{\left(  \lambda_{1}\left(  N\right)  -\tau\right)  }{\lambda
_{1}\left(  N\right)  }\right)  \rho^{N-1}-C\rho^{q-1}>0$. Thus, if
$\varepsilon$ is sufficiently small then we can find some $\rho_{\varepsilon
}>0$ such that $J_{\varepsilon}(u)>0$ if $\left\Vert u\right\Vert
=\rho_{\varepsilon}$ and even $\rho_{\varepsilon}\rightarrow0$ as
$\varepsilon\rightarrow0$.
\end{proof}

\begin{lemma}
There exists $e\in E$ with $\left\Vert e\right\Vert _{E}>\rho_{\varepsilon} $
such that $J_{\varepsilon}(e)<\underset{\left\Vert u\right\Vert =\rho
_{\varepsilon}}{\inf}J_{\varepsilon}(u)$.
\end{lemma}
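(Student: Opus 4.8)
The plan is to produce a single ray along which $J_{\varepsilon}$ diverges to $-\infty$. Fix a function $u_{0}\in C_{0}^{\infty}(\mathbb{R}^{N})$ with $u_{0}\geq0$ and $u_{0}\not\equiv0$, and set $K:=\operatorname{supp}u_{0}$. Since $h_{1}\in L_{loc}^{\infty}(\mathbb{R}^{N})$ and $V$ is continuous, both are bounded on $K$, so $u_{0}\in E$; and since $0\leq\beta<N$ the weight $|x|^{-\beta}$ is locally integrable, so $0<\int_{\mathbb{R}^{N}}\frac{|u_{0}|^{p}}{|x|^{\beta}}\,dx<\infty$. We shall take $e=tu_{0}$ for a suitably large $t>0$.

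First I would bound the ``quadratic-type'' part of $J_{\varepsilon}(tu_{0})$ from above. By the growth estimate \eqref{2.2} for $A$ together with H\"older's inequality (recall $h_{0}\in L^{N/(N-1)}$ and $\nabla u_{0}\in L^{N}$),
\[
\int_{\mathbb{R}^{N}}A(x,t\nabla u_{0})\,dx+\frac{1}{N}\int_{\mathbb{R}^{N}}V(x)|tu_{0}|^{N}\,dx\leq a_{1}t+a_{2}t^{N}
\]
for finite constants $a_{1},a_{2}$, while $-\varepsilon\int_{\mathbb{R}^{N}}htu_{0}\,dx\leq\varepsilon\left\Vert h\right\Vert_{\ast}\left\Vert u_{0}\right\Vert_{E}\,t$. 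The decisive step is a lower bound for the nonlinear term via the Ambrosetti--Rabinowitz condition $(f2)$. Fix $s_{0}>0$; by $(f1)$, $F(x,s_{0})=\int_{0}^{s_{0}}f(x,\tau)\,d\tau>0$ for every $x$, so $m_{0}:=\min_{x\in K}F(x,s_{0})>0$ by continuity of $f$ and compactness of $K$. For $s\geq s_{0}$, $(f2)$ yields $\frac{d}{ds}\log F(x,s)=\frac{f(x,s)}{F(x,s)}\geq\frac{p}{s}$, hence $F(x,s)\geq F(x,s_{0})(s/s_{0})^{p}\geq m_{0}s_{0}^{-p}s^{p}$ for all $x\in K$ and $s\geq s_{0}$. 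Writing $K_{t}=\{x\in\mathbb{R}^{N}:tu_{0}(x)\geq s_{0}\}$, which increases to $\{u_{0}>0\}$ as $t\to\infty$, monotone convergence gives
\[
\int_{\mathbb{R}^{N}}\frac{F(x,tu_{0})}{|x|^{\beta}}\,dx\geq m_{0}s_{0}^{-p}\,t^{p}\int_{K_{t}}\frac{|u_{0}|^{p}}{|x|^{\beta}}\,dx\geq c_{\ast}\,t^{p}\qquad\text{for all }t\text{ large,}
\]
where $c_{\ast}:=\tfrac{1}{2}m_{0}s_{0}^{-p}\int_{\{u_{0}>0\}}\frac{|u_{0}|^{p}}{|x|^{\beta}}\,dx>0$.

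Putting these estimates together,
\[
J_{\varepsilon}(tu_{0})\leq a_{2}t^{N}+\bigl(a_{1}+\varepsilon\left\Vert h\right\Vert_{\ast}\left\Vert u_{0}\right\Vert_{E}\bigr)t-c_{\ast}t^{p},
\]
and the right-hand side tends to $-\infty$ as $t\to+\infty$ because $p>N>1$. Since $\inf_{\left\Vert u\right\Vert_{E}=\rho_{\varepsilon}}J_{\varepsilon}(u)$ is a fixed real number (indeed $\geq0$ by the preceding lemma), we may choose $t$ so large that both $t\left\Vert u_{0}\right\Vert_{E}>\rho_{\varepsilon}$ and $J_{\varepsilon}(tu_{0})<\inf_{\left\Vert u\right\Vert_{E}=\rho_{\varepsilon}}J_{\varepsilon}(u)$, and then $e:=tu_{0}$ has the asserted properties. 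The one point requiring care is the uniformity in $x$ of the constant in the superlinear lower bound for $F$; this is precisely why $u_{0}$ is taken compactly supported, so that continuity of $f$ furnishes the uniform constant $m_{0}$ on $K$.
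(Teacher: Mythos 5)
Your proof is correct and follows the same strategy as the paper's: pick a nonnegative compactly supported function, use the growth bound \eqref{2.2} on $A$ to control the ``linear-plus-$N$th-power'' terms from above, and use the Ambrosetti–Rabinowitz condition $(f2)$ to get a superlinear ($t^p$, $p>N$) lower bound on $\int F(x,tu_0)/|x|^{\beta}\,dx$, forcing $J_{\varepsilon}(tu_0)\to-\infty$. The only difference is that you spell out the derivation $F(x,s)\geq m_0 s_0^{-p}s^p$ from $(f2)$ via the logarithmic-derivative argument and the monotone-convergence step, whereas the paper simply cites $F(x,s)\geq cs^p-d$ as a consequence of $(f2)$.
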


\begin{proof}
Let $u\in E\setminus\left\{  0\right\}  ,~u\geq0$ with compact support
$\Omega=supp(u)$. By $(f2),$ we have that for $p>N$, there exists a positive
constant $C>0$ such that
\begin{equation}
\forall s\geq0,~\forall x\in\Omega:~F\left(  x,s\right)  \geq cs^{p}-d.
\label{3.4}%
\end{equation}
Then by (\ref{2.2}), we get
\[
J_{\varepsilon}(tu)\leq Ct\int_{\Omega}h_{0}\left(  x\right)  \left\vert
\nabla u\right\vert dx+Ct^{N}\left\Vert u\right\Vert _{E}^{N}-Ct^{p}%
\int_{\Omega}\frac{\left\vert u\right\vert ^{p}}{\left\vert x\right\vert
^{\beta}}dx+C+\varepsilon t\left\vert \int_{\Omega}hudx\right\vert
\]
Since $p>N$, we have $J_{\varepsilon}(tu)\rightarrow-\infty$ as $t\rightarrow
\infty$. Setting $e=tu$ with $t$ sufficiently large, we get the conclusion.
\end{proof}

Now, we define the Moser Functions which have been frequently used in the
literature (see, for example, \cite{Do1}, \cite{DoMS}, \cite{AY}):%

\[
\widetilde{m}_{l}(x,r)=\frac{1}{\omega_{N-1}^{1/N}}\left\{
\begin{array}
[c]{l}%
\left(  \log l\right)  ^{(N-1)/N}\text{ \ \ \ if }|x|\leq\frac{r}{l}\\
\frac{\log\frac{r}{|x|}}{\left(  \log l\right)  ^{1/N}}\text{
\ \ \ \ \ \ \ \ \ \ \ if }\frac{r}{l}\leq|x|\leq r\\
0\text{ \ \ \ \ \ \ \ \ \ \ \ \ \ \ \ \ \ \ \ \ \ if }|x|\geq r
\end{array}
\right.  \bigskip
\]

We then immediately have $\widetilde{m}_{l}\left(  .,r\right)  \in
W^{1,N}(\mathbb{R}^{N})$, the support of $\widetilde{m}_{l}(x,r)$ is the ball
$B_{r}$, and
\begin{equation}
\int_{\mathbb{R}^{N}}\left\vert \nabla\widetilde{m}_{l}(x,r)\right\vert
^{N}dx=1,\text{ and }\left\Vert \widetilde{m}_{l}\right\Vert _{W^{1,N}%
(\mathbb{R}^{N})}^{N}=1+\frac{1}{\log l}\left(  \frac{\left(  N-1\right)
!}{N^{N}}r^{N}+o_{l}(1)\right)  . \label{5.1}%
\end{equation}
Then%
\[
\left\Vert \widetilde{m}_{l}\right\Vert _{E}^{N}\leq1+\frac{\underset
{\left\vert x\right\vert \leq r}{\max}V(x)}{\log l}\left(  \frac{\left(
N-1\right)  !}{N^{N}}r^{N}+o_{l}(1)\right)  .
\]
Consider $m_{l}(x,r)=\widetilde{m}_{l}(x,r)/\left\Vert \widetilde{m}%
_{l}\right\Vert _{E}$, we can write
\begin{equation}
m_{l}^{N/(N-1)}\left(  x,r\right)  =\omega_{N-1}^{-1/(N-1)}\log l+d_{l}\text{
for }\left\vert x\right\vert \leq r/l, \label{5.2}%
\end{equation}
Using (\ref{5.1}), we conclude that $\left\Vert \widetilde{m}_{l}\right\Vert
\rightarrow1$ as $l\rightarrow\infty$. Consequently,%
\begin{align}
\frac{d_{l}}{\log l}  &  \rightarrow0\text{ as }l\rightarrow\infty
,\label{5.3}\\
d  &  =\underset{l\rightarrow\infty}{\lim\inf}d_{l}\nonumber\\
d  &  \geq-\underset{\left\vert x\right\vert \leq r}{\max}V(x)\omega
_{N-1}^{-1/(N-1)}\frac{\left(  N-2\right)  !}{N^{N}}r^{N}.\nonumber
\end{align}

The following lemma was established in \cite{DoMS} when $\beta=0$.
 We adapt the proof given in \cite{DoMS} to our case $0\le \beta<N$. See also \cite{LaLu2} for a similar result on the Heisenberg group.

\begin{lemma}
\bigskip Suppose that (V1) and (f1)-(f5) hold. Then there exists $k\in%
\mathbb{N}
$ such that
\[
\underset{t\geq0}{\max}\left\{  \frac{t^{N}}{N}-\int_{\mathbb{R}^{N}}%
\frac{F\left(  x,tm_{k}\right)  }{\left\vert x\right\vert ^{\beta}}dx\right\}
<\frac{1}{N}\left(  \frac{N-\beta}{N}\frac{\alpha_{N}}{\alpha_{0}}\right)
^{N-1}%
\]

\end{lemma}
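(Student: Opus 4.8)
The plan is to argue by contradiction: suppose no such $k$ exists, so that for every $l$,
\[
\max_{t\ge 0}\left\{\frac{t^N}{N}-\int_{\mathbb{R}^N}\frac{F(x,tm_l)}{|x|^\beta}\,dx\right\}\ge \frac{1}{N}\left(\frac{N-\beta}{N}\frac{\alpha_N}{\alpha_0}\right)^{N-1}.
\]
For each $l$ let $t_l\ge 0$ attain this maximum. First I would note the maximum is attained at some finite $t_l>0$: the function $t\mapsto t^N/N-\int F(x,tm_l)/|x|^\beta$ is continuous, equals $0$ at $t=0$, and by $(f2)$ (or $(f1)$ together with the growth of $F$) it tends to $-\infty$ as $t\to\infty$, so the sup is a genuine max at an interior point. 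Differentiating in $t$ at $t_l$ gives the stationarity identity
\[
t_l^N=\int_{\mathbb{R}^N}\frac{f(x,t_l m_l)\,t_l m_l}{|x|^\beta}\,dx.
\]

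Next I would extract a lower bound on $t_l^N$. From the assumption and the nonnegativity of $F$ we get $t_l^N/N\ge \frac{1}{N}\big(\frac{N-\beta}{N}\frac{\alpha_N}{\alpha_0}\big)^{N-1}$, i.e.\ $t_l^N\ge \big(\frac{N-\beta}{N}\frac{\alpha_N}{\alpha_0}\big)^{N-1}$ for all $l$. Then I would show $t_l^N$ is bounded above and in fact $t_l^N\to \big(\frac{N-\beta}{N}\frac{\alpha_N}{\alpha_0}\big)^{N-1}$ as $l\to\infty$. The key is to use $(f5)$: given $\delta>0$, there is $s_\delta$ such that $s f(x,s)\ge (\gamma-\delta)\exp(\alpha_0 |s|^{N/(N-1)})$ for $s\ge s_\delta$, uniformly on compact sets, where $\gamma$ denotes the left-hand side of $(f5)$. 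Plugging this into the stationarity identity and restricting the integral to the ball $|x|\le r/l$ (on which, by \eqref{5.2}, $m_l^{N/(N-1)}=\omega_{N-1}^{-1/(N-1)}\log l+d_l$, so $t_l m_l$ is large for large $l$ once $t_l$ is bounded below) yields
\[
t_l^N\ge (\gamma-\delta)\int_{|x|\le r/l}\frac{1}{|x|^\beta}\exp\!\left(\alpha_0 t_l^{N/(N-1)} m_l^{N/(N-1)}\right)dx.
\]
Computing the integral over the ball $|x|\le r/l$ explicitly (using $\int_{|x|\le \rho}|x|^{-\beta}dx=\omega_{N-1}\rho^{N-\beta}/(N-\beta)$) gives, with $A_l:=\alpha_0 t_l^{N/(N-1)}\omega_{N-1}^{-1/(N-1)}$,
\[
t_l^N\ge (\gamma-\delta)\,\frac{\omega_{N-1}}{N-\beta}\left(\frac{r}{l}\right)^{N-\beta} l^{A_l}\exp\!\big(\alpha_0 t_l^{N/(N-1)} d_l\big).
\]

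Now the crux of the argument. If $\limsup_l t_l^N > \big(\frac{N-\beta}{N}\frac{\alpha_N}{\alpha_0}\big)^{N-1}$ along a subsequence, then along that subsequence $A_l>N-\beta$ is bounded away from $N-\beta$, so $l^{A_l-(N-\beta)}\to\infty$; since $\exp(\alpha_0 t_l^{N/(N-1)}d_l)$ decays at most like a power of $l$ (because $d_l/\log l\to 0$ by \eqref{5.3}, so $\exp(\alpha_0 t_l^{N/(N-1)}d_l)=l^{o(1)}$), the right-hand side blows up while the left-hand side $t_l^N$ stays bounded (it must, again by the $(f2)$-type coercivity: $t_l^N/N\le t_l^N/N$, and $F\ge c(t_l m_l)^p-d$ on the support forces $t_l^N\lesssim 1$). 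This contradiction forces $t_l^N\to \big(\frac{N-\beta}{N}\frac{\alpha_N}{\alpha_0}\big)^{N-1}$, hence $A_l\to N-\beta$ and $l^{A_l}=l^{N-\beta+o(1)}$. Feeding this back in, the inequality becomes, as $l\to\infty$,
\[
\left(\frac{N-\beta}{N}\frac{\alpha_N}{\alpha_0}\right)^{N-1}\ge (\gamma-\delta)\,\frac{\omega_{N-1}}{N-\beta}\,r^{N-\beta}\,e^{\alpha_N d(N-\beta)/N},
\]
after also using $\alpha_0 t_l^{N/(N-1)}\to \alpha_N(N-\beta)/N\cdot\omega_{N-1}^{1/(N-1)}$-type bookkeeping together with $\liminf d_l=d$. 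Since $\delta>0$ is arbitrary, this contradicts the lower bound on $\gamma$ imposed in $(f5)$ — note the quantity $\big[\frac{r^{N-\beta}}{N-\beta}e^{\alpha_N d(N-\beta)/N}+\mathcal{C}r^{N-\beta}-\frac{r^{N-\beta}}{N-\beta}\big]$ appearing there is engineered precisely so that $\gamma$ exceeds what the above inequality allows (the constant $\mathcal{C}$ absorbs the contribution of the annulus $r/l\le|x|\le r$, which I would estimate separately using the explicit form of $m_l$ there and $(f1)$).

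The main obstacle I anticipate is the careful asymptotic bookkeeping in the exponent: one must track $\alpha_0 t_l^{N/(N-1)}m_l^{N/(N-1)} = A_l\log l + \alpha_0 t_l^{N/(N-1)}d_l$ and show the power of $l$ that emerges is exactly $l^{A_l}$ with $A_l\to N-\beta$, while simultaneously controlling $\|\widetilde m_l\|_E\to 1$ (from \eqref{5.1} and $(V1)$) and the lower bound $\liminf d_l=d$. Handling the annular region $r/l\le |x|\le r$ to pin down $\mathcal{C}$ — which requires a change of variables $s=\log(r/|x|)$ and a one-dimensional estimate — is the other place where genuine care is needed; everything else is the contradiction machinery sketched above.
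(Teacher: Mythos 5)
Your proposal follows essentially the same route as the paper: argue by contradiction, use the stationarity identity $t_l^N=\int t_lm_l f(x,t_lm_l)/|x|^\beta\,dx$, use (f5) to obtain a lower bound on the integral restricted to $|x|\le r/l$, show $t_l^N\to\big(\tfrac{N-\beta}{N}\tfrac{\alpha_N}{\alpha_0}\big)^{N-1}$, and then split the domain into $\{|x|\le r/l\}$ and the annulus $\{r/l\le|x|\le r\}$ to produce the constant $\mathcal{C}$ and reach a contradiction with the threshold in (f5). The main technical obstacle you flag — tracking $\alpha_0 t_l^{N/(N-1)}m_l^{N/(N-1)}=A_l\log l+\alpha_0 t_l^{N/(N-1)}d_l$ and the annular change-of-variables — is indeed where the real work sits, exactly as in the paper.

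There are, however, two places where your sketch is either wrong or leaves a genuine gap. First, your justification of the \emph{boundedness} of $(t_l)$ is not correct as written: the parenthetical ``$t_l^N/N\le t_l^N/N$'' is vacuous, and the claim that the (f2)-coercivity ``$F(x,s)\ge cs^p-d$ forces $t_l^N\lesssim1$'' does not follow, because $\int_{B_r}m_l^p/|x|^\beta\,dx$ is not uniformly bounded below in $l$ (the Moser sequence concentrates, so this integral can degenerate); one cannot read a uniform upper bound on $t_l$ from it. You also invoke $\exp(\alpha_0 t_l^{N/(N-1)}d_l)=l^{o(1)}$, but this bound uses $t_l$ bounded — so you are implicitly assuming what you are trying to prove. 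The paper avoids this circularity by collecting the exponent into
\[
L_k=\tfrac{\alpha_0N\log k}{\alpha_N}t_k^{N/(N-1)}+\alpha_0t_k^{N/(N-1)}d_k-N\log t_k-(N-\beta)\log k
\]
and deducing from $1\ge(\beta_0-\tau)\tfrac{\omega_{N-1}}{N-\beta}r^{N-\beta}e^{L_k}$ that $L_k$ is bounded above, which forces $t_k$ bounded without any auxiliary coercivity claim. Second, you never introduce the level-set decomposition $A_k=\{x\in B_r:t_km_k\ge R_\tau\}$, $B_k=B_r\setminus A_k$, which is needed because the lower bound $uf(x,u)\ge(\beta_0-\tau)\exp(\alpha_0|u|^{N/(N-1)})$ from (f5) only holds where $t_km_k\ge R_\tau$; the contribution over $B_k$ produces exactly the subtracted term $-\tfrac{\omega_{N-1}}{N-\beta}r^{N-\beta}$ in the final inequality, and your sketched final inequality (which has only the inner-ball term $e^{\alpha_Nd(N-\beta)/N}$ on the right) omits both this and the annular $\mathcal{C}$-term; without all three pieces the inequality does not contradict the threshold imposed in (f5). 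These are fixable along the lines of the paper, but as stated they are real gaps.
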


\begin{proof}
Choose $r>0$ as in the assumption $(f5)$ and $\beta_{0}>0$ such that
\begin{align}
\underset{s\rightarrow\infty}{\lim}sf(x,s)\exp\left(  -\alpha_{0}\left\vert
s\right\vert ^{N/(N-1)}\right)   &  \geq\beta_{0}\label{5.4}\\
&  >\frac{1}{\left[  \frac{r^{N-\beta}}{N-\beta}e^{(\alpha_{N}d(N-\beta
)/N)}+Cr^{N-\beta}-\frac{r^{N-\beta}}{N-\beta}\right]  }\left(  \frac{N-\beta
}{\alpha_{0}}\right)  ^{N-1},\nonumber
\end{align}
where
\begin{align*}
\mathcal{C}  &  =\underset{k\rightarrow\infty}{\lim}\zeta_{k}\log
k\int\limits_{0}^{\zeta_{k}^{-1}}\exp\left[  \left(  N-\beta\right)  \log
k\left(  s^{N/(N-1)}-\zeta_{k}s\right)  \right]  ds>0,~\zeta_{k}=\left\Vert
\widetilde{m}_{k}\right\Vert ,\\
\mathcal{C}  &  \geq\frac{1-e^{-(N-\beta)\log n}}{N-\beta}.
\end{align*}
Suppose, by contradiction, that for all $k$ we get
\[
\underset{t\geq0}{\max}\left\{  \frac{t^{N}}{N}-\int_{\mathbb{R}^{N}}%
\frac{F\left(  x,tm_{k}\right)  }{\left\vert x\right\vert ^{\beta}}dx\right\}
\geq\frac{1}{N}\left(  \frac{N-\beta}{N}\frac{\alpha_{N}}{\alpha_{0}}\right)
^{N-1}%
\]
where $m_{k}(x)=m_{k}(x,r)$. By (\ref{3.4}), for each $k$ there exists
$t_{k}>0$ such that
\[
\frac{t_{k}^{N}}{N}-\int_{\mathbb{R}^{N}}\frac{F\left(  x,t_{k}m_{k}\right)
}{\left\vert x\right\vert ^{\beta}}dx=\underset{t\geq0}{\max}\left\{
\frac{t^{N}}{N}-\int_{\mathbb{R}^{N}}\frac{F\left(  x,tm_{k}\right)
}{\left\vert x\right\vert ^{\beta}}dx\right\}
\]
Thus
\[
\frac{t_{k}^{N}}{N}-\int_{\mathbb{R}^{N}}\frac{F\left(  x,t_{k}m_{k}\right)
}{\left\vert x\right\vert ^{\beta}}dx\geq\frac{1}{N}\left(  \frac{N-\beta}%
{N}\frac{\alpha_{N}}{\alpha_{0}}\right)  ^{N-1}.
\]
From $F(x,u)\geq0$, we obtain
\begin{equation}
t_{k}^{N}\geq\left(  \frac{N-\beta}{N}\frac{\alpha_{N}}{\alpha_{0}}\right)
^{N-1} \label{5.5}%
\end{equation}
Since at $t=t_{k}$ we have
\[
\frac{d}{dt}\left(  \frac{t^{N}}{N}-\int_{\mathbb{R}^{N}}\frac{F\left(
x,tm_{k}\right)  }{\left\vert x\right\vert ^{\beta}}dx\right)  =0
\]
it follows that
\begin{equation}
t_{k}^{N}=\int_{\mathbb{R}^{N}}t_{k}m_{k}\frac{f\left(  x,t_{k}m_{k}\right)
}{\left\vert x\right\vert ^{\beta}}dx=\int_{\left\vert x\right\vert \leq
r}t_{k}m_{k}\frac{f\left(  x,t_{k}m_{k}\right)  }{\left\vert x\right\vert
^{\beta}}dx \label{5.6}%
\end{equation}
Using hypothesis $(f5)$, given $\tau>0$ there exists $R_{\tau}>0$ such that
for all $u\geq R_{\tau}$ and $\left\vert x\right\vert \leq r$, we have
\begin{equation}
uf(x,u)\geq\left(  \beta_{0}-\tau\right)  \exp\left(  \alpha_{0}\left\vert
u\right\vert ^{N/(N-1)}\right)  . \label{5.7}%
\end{equation}
From (\ref{5.6}) and (\ref{5.7}), for large $k$, we obtain
\begin{align*}
t_{k}^{N}  &  \geq\left(  \beta_{0}-\tau\right)  \int_{\left\vert x\right\vert
\leq\frac{r}{k}}\frac{\exp\left(  \alpha_{0}\left\vert t_{k}m_{k}\right\vert
^{N/(N-1)}\right)  }{\left\vert x\right\vert ^{\beta}}dx\\
&  =\left(  \beta_{0}-\tau\right)  \frac{\omega_{N-1}}{N-\beta}\left(
\frac{r}{k}\right)  ^{N-\beta}\exp\left(  \alpha_{0}t_{k}^{N/(N-1)}%
\omega_{N-1}^{-1/(N-1)}\log k+\alpha_{0}t_{k}^{N/(N-1)}d_{k}\right)
\end{align*}
Thus, setting
\[
L_{k}=\frac{\alpha_{0}N\log k}{\alpha_{N}}t_{k}^{N/(N-1)}+\alpha_{0}%
t_{k}^{N/(N-1)}d_{k}-N\log t_{k}-\left(  N-\beta\right)  \log k
\]
we have
\[
1\geq\left(  \beta_{0}-\tau\right)  \frac{\omega_{N-1}}{N-\beta}r^{N-\beta
}\exp L_{k}%
\]
Consequently, the sequence $\left(  t_{k}\right)  $ is bounded. Otherwise, up
to subsequences, we would have $\underset{k\rightarrow\infty}{\lim}%
L_{k}=\infty$ which leads to a contradiction. Moreover, by (\ref{5.3}),
(\ref{5.5}) and
\[
t_{k}^{N}\geq\left(  \beta_{0}-\tau\right)  \frac{\omega_{N-1}}{N-\beta
}r^{N-\beta}\exp\left[  \left(  N\frac{\alpha_{0}t_{k}^{N/(N-1)}}{\alpha_{N}%
}-\left(  N-\beta\right)  \right)  \log k+\alpha_{0}t_{k}^{N/(N-1)}%
d_{k}\right]
\]
it follows that
\begin{equation}
t_{k}^{N}\overset{k\rightarrow\infty}{\rightarrow}\left(  \frac{N-\beta}%
{N}\frac{\alpha_{N}}{\alpha_{0}}\right)  ^{N-1} \label{5.8}%
\end{equation}
Setting
\[
A_{k}=\left\{  x\in B_{r}:t_{k}m_{k}\geq R_{\tau}\right\}  \text{ and }%
B_{k}=B_{r}\setminus A_{k}%
\]
From (\ref{5.6}) and (\ref{5.7}) we have
\begin{align}
t_{k}^{N}  &  \geq\left(  \beta_{0}-\tau\right)  \int_{\left\vert x\right\vert
\leq r}\frac{\exp\left(  \alpha_{0}\left\vert t_{k}m_{k}\right\vert
^{N/(N-1)}\right)  }{\left\vert x\right\vert ^{\beta}}dx+\int_{B_{k}}%
\frac{t_{k}m_{k}f\left(  x,t_{k}m_{k}\right)  }{\left\vert x\right\vert
^{\beta}}dx\nonumber\\
&  -\left(  \beta_{0}-\tau\right)  \int_{B_{k}}\frac{\exp\left(  \alpha
_{0}\left\vert t_{k}m_{k}\right\vert ^{N/(N-1)}\right)  }{\left\vert
x\right\vert ^{\beta}}dx \label{5.9}%
\end{align}
Notice that $m_{k}(x)\rightarrow0$ and the characteristic functions
$\chi_{B_{k}}\rightarrow1$ for almost everywhere $x$ in $B_{r}$. Therefore the
Lebesgue's dominated convergence theorem implies%
\[
\int_{B_{k}}\frac{t_{k}m_{k}f\left(  x,t_{k}m_{k}\right)  }{\left\vert
x\right\vert ^{\beta}}dx\rightarrow0
\]
and
\[
\int_{B_{k}}\frac{\exp\left(  \alpha_{0}\left\vert t_{k}m_{k}\right\vert
^{N/(N-1)}\right)  }{\left\vert x\right\vert ^{\beta}}dx\rightarrow
\frac{\omega_{N-1}}{N-\beta}r^{N-\beta}%
\]
Moreover, using that
\[
t_{k}^{N}\overset{k\rightarrow\infty}{\underset{\geq}{\rightarrow}}\left(
\frac{N-\beta}{N}\frac{\alpha_{N}}{\alpha_{0}}\right)  ^{N-1}%
\]
we have
\begin{align*}
&  \int_{\left\vert x\right\vert \leq r}\frac{\exp\left(  \alpha_{0}\left\vert
t_{k}m_{k}\right\vert ^{N/(N-1)}\right)  }{\left\vert x\right\vert ^{\beta}%
}dx\\
&  \geq\int_{\left\vert x\right\vert \leq r}\frac{\exp\left(  \alpha
_{N}\left\vert m_{k}\right\vert ^{N/(N-1)}(N-\beta)/N\right)  }{\left\vert
x\right\vert ^{\beta}}dx\\
&  =\int_{\left\vert x\right\vert \leq r/k}\frac{\exp\left(  \alpha
_{N}\left\vert m_{k}\right\vert ^{N/(N-1)}(N-\beta)/N\right)  }{\left\vert
x\right\vert ^{\beta}}dx\\
&  +\int_{r/k\leq\left\vert x\right\vert \leq r}\frac{\exp\left(  \alpha
_{N}\left\vert m_{k}\right\vert ^{N/(N-1)}(N-\beta)/N\right)  }{\left\vert
x\right\vert ^{\beta}}dx
\end{align*}
and
\begin{align*}
&  \int_{\left\vert x\right\vert \leq r/k}\frac{\exp\left(  \alpha
_{N}\left\vert m_{k}\right\vert ^{N/(N-1)}(N-\beta)/N\right)  }{\left\vert
x\right\vert ^{\beta}}dx\\
&  =\int_{\left\vert x\right\vert \leq r/k}\frac{\exp\left[  \alpha_{N}%
\omega_{N-1}^{-1/(N-1)}\log k(N-\beta)/N+d_{k}\alpha_{N}(N-\beta)/N\right]
}{\left\vert x\right\vert ^{\beta}}dx\\
&  =\frac{\omega_{N-1}}{N-\beta}\left(  \frac{r}{k}\right)  ^{N-\beta
}k^{(N-\beta+\alpha_{N}\frac{d_{k}}{\log k}(N-\beta)/N)}\\
&  =\frac{\omega_{N-1}}{N-\beta}r^{N-\beta}k^{(\alpha_{N}\frac{d_{k}}{\log
k}(N-\beta)/N)}.
\end{align*}
Now, using the change of variable
\[
x=\frac{\log\left(  \frac{r}{s}\right)  }{\zeta_{k}\log k}\text{ with }%
\zeta_{k}=\left\Vert \widetilde{m}_{k}\right\Vert
\]
by straightforward computation, we have
\begin{align*}
&  \int_{r/k\leq\left\vert x\right\vert \leq r}\frac{\exp\left(  \alpha
_{N}\left\vert m_{k}\right\vert ^{N/(N-1)}(N-\beta)/N\right)  }{\left\vert
x\right\vert ^{\beta}}dx\\
&  =\omega_{N-1}r^{N-\beta}\zeta_{k}\log k\int\limits_{0}^{\zeta_{k}^{-1}}%
\exp\left[  \left(  N-\beta\right)  \log k\left(  s^{N/(N-1)}-\zeta
_{k}s\right)  \right]  ds
\end{align*}
which converges to $C\omega_{N-1}r^{N-\beta}$ as $k\rightarrow\infty$ where
\[
C=\underset{k\rightarrow\infty}{\lim}\zeta_{k}\log k\int\limits_{0}^{\zeta
_{k}^{-1}}\exp\left[  \left(  N-\beta\right)  \log k\left(  s^{N/(N-1)}%
-\zeta_{k}s\right)  \right]  ds>0.
\]
Finally, taking $k\rightarrow\infty$ in (\ref{5.9}), using (\ref{5.8}) and
using (\ref{5.3}) (see \cite{Do1, DoMS}), we obtain
\[
\left(  \frac{N-\beta}{N}\frac{\alpha_{N}}{\alpha_{0}}\right)  ^{N-1}%
\geq\left(  \beta_{0}-\tau\right)  \left[  \frac{\omega_{N-1}}{N-\beta
}r^{N-\beta}e^{(\alpha_{N}d(N-\beta)/N)}+C\omega_{N-1}r^{N-\beta}-\frac
{\omega_{N-1}}{N-\beta}r^{N-\beta}\right]
\]
which implies that
\[
\beta_{0}\leq\frac{1}{\left[  \frac{r^{N-\beta}}{N-\beta}e^{(\alpha
_{N}d(N-\beta)/N)}+Cr^{N-\beta}-\frac{r^{N-\beta}}{N-\beta}\right]  }\left(
\frac{N-\beta}{\alpha_{0}}\right)  ^{N-1}.
\]
This contradicts to (\ref{5.4}), and the proof is complete.
\end{proof}

\section{The existence of solution for the problem (\ref{1.1})}

It is well known that the failure of the (PS) compactness condition creates
difficulties in studying this class of elliptic problems involving critical
growth and unbounded domains. In next several lemmas, instead of (PS)
sequence, we will use and analyze the compactness of Cerami sequences of
$J_{\varepsilon}$.

\begin{lemma}
\bigskip Let $\left(  u_{k}\right)  \subset E$ be an arbitrary Cerami sequence
of $J_{\varepsilon}$, i.e.,
\[
J_{\varepsilon}\left(  u_{k}\right)  \rightarrow c,~\left(  1+\left\Vert
u_{k}\right\Vert _{E}\right)  \left\Vert DJ_{\varepsilon}\left(  u_{k}\right)
\right\Vert _{E^{\prime}}\rightarrow0\text{ as }k\rightarrow\infty.
\]
Then there exists a subsequence of $\left(  u_{k}\right)  $ (still denoted by
$\left(  u_{k}\right)  $) and $u\in E$ such that
\[
\left\{
\begin{array}
[c]{l}%
\frac{f(x,u_{k})}{\left\vert x\right\vert ^{\beta}}\rightarrow\frac
{f(x,u)}{\left\vert x\right\vert ^{\beta}}\text{
\ \ \ \ \ \ \ \ \ \ \ \ \ \ \ \ \ \ \ \ \ \ \ \ \ \ \ \ strongly in }%
L_{loc}^{1}\left(  \mathbb{R}^{N}\right) \\
\nabla u_{k}(x)\rightarrow\nabla u(x)\text{
\ \ \ \ \ \ \ \ \ \ \ \ \ \ \ \ \ \ \ \ \ \ \ almost everywhere in }%
\mathbb{R}^{N}\\
a\left(  x,\nabla u_{k}\right)  \rightharpoonup a\left(  x,\nabla u\right)
\text{ \ \ \ weakly in }\left(  L_{loc}^{N/(N-1)}\left(  \mathbb{R}%
^{N}\right)  \right)  ^{N}\\
u_{k}\rightharpoonup u\text{
\ \ \ \ \ \ \ \ \ \ \ \ \ \ \ \ \ \ \ \ \ \ \ \ \ \ \ \ \ \ \ \ \ \ \ \ \ \ \ \ \ \ \ weakly
in }E
\end{array}
\right.
\]
Furthermore $u$ is a weak solution of (\ref{1.1}).
\end{lemma}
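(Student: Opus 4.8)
The plan is to follow the standard strategy for treating critical-growth problems with a lack of compactness: extract from the Cerami sequence a weakly convergent subsequence, upgrade the convergence to a.e. convergence of gradients, pass to the limit in the weak formulation, and check that the limit is indeed a weak solution of \eqref{1.1}.

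\textbf{Step 1: Boundedness of the Cerami sequence.}
First I would show that any Cerami sequence $(u_k)$ is bounded in $E$. Write $J_\varepsilon(u_k)\to c$ and $\langle DJ_\varepsilon(u_k),u_k\rangle = o(1)$ (this uses the full strength of the Cerami condition, since $(1+\|u_k\|_E)\|DJ_\varepsilon(u_k)\|_{E'}\to 0$). Forming $J_\varepsilon(u_k)-\tfrac{1}{p}\langle DJ_\varepsilon(u_k),u_k\rangle$ and using $(A3)$ (so that $\int A(x,\nabla u_k)\ge \tfrac1N\int a(x,\nabla u_k)\cdot\nabla u_k$, giving a favorable sign when combined with the $1/p$ term since $p>N$) together with the Ambrosetti--Rabinowitz condition $(f2)$ (which gives $\tfrac1p f(x,s)s - F(x,s)\ge 0$) yields a bound of the form $\bigl(\tfrac1N-\tfrac1p\bigr)C\|u_k\|_E^N \le c + o(1) + \varepsilon\|h\|_\ast\|u_k\|_E + (\text{lower order})$, from which boundedness follows because $N\ge 2>1$. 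Here I use $(A4)$ to compare $\int A(x,\nabla u_k)$ with $\|u_k\|_E^N$.

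\textbf{Step 2: Weak limit and a.e. convergence.}
Since $E$ is reflexive, up to a subsequence $u_k\rightharpoonup u$ in $E$, and by the compact embedding $E\hookrightarrow L^q(\mathbb{R}^N)$ for $q\ge N$ (from $(V1)$ and $(V2)$ or $(V3)$; see Proposition \ref{prop5.2} and Remark \ref{rem5.2}) we get $u_k\to u$ strongly in $L^q(\mathbb{R}^N)$ for all $q\ge N$ and $u_k\to u$ a.e. The convergence $\frac{f(x,u_k)}{|x|^\beta}\to\frac{f(x,u)}{|x|^\beta}$ in $L^1_{loc}$ follows by a generalized dominated convergence / Vitali argument: the growth bound $(f1)$, the uniform $L^1$ bound from Lemma~1.1 applied to the bounded sequence (or rather a de la Vallée-Poussin equi-integrability argument using Lemma 3.1 with $q>N$ to get a higher-integrability estimate on $\frac{|u_k|^{q-1}[\exp(\kappa|u_k|^{N/(N-1)})-S_{N-2}]}{|x|^\beta}$ on bounded sets), together with a.e. convergence of $u_k$, give equi-integrability of $\frac{f(x,u_k)}{|x|^\beta}$ on bounded sets, hence $L^1_{loc}$ convergence.

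\textbf{Step 3 (the main obstacle): a.e. convergence of the gradients.}
The hard part is passing from weak convergence of $\nabla u_k$ to $\nabla u_k(x)\to\nabla u(x)$ a.e. The plan is the classical monotonicity-trick argument of Boccardo--Murat adapted to this setting: test $DJ_\varepsilon(u_k)-DJ_\varepsilon(u)$ (or rather $DJ_\varepsilon(u_k)$ and compare) against $(u_k-u)\varphi$ with $\varphi$ a suitable cutoff, and show that $\int_{B_R} \langle a(x,\nabla u_k)-a(x,\nabla u),\nabla u_k-\nabla u\rangle\,dx\to 0$. The terms to control are: the term $\int a(x,\nabla u_k)\cdot\nabla(u_k-u)$, which is $o(1)$ because $\langle DJ_\varepsilon(u_k),(u_k-u)\rangle\to 0$ (using $\|DJ_\varepsilon(u_k)\|_{E'}\to0$ and boundedness of $u_k-u$) once we know $\int \frac{f(x,u_k)}{|x|^\beta}(u_k-u)\to 0$ (from Step 2 together with the strong $L^q$ convergence, handling the tail by Hölder with Lemma~1.1) and $\int V|u_k|^{N-2}u_k(u_k-u)\to0$ (from $L^N$ convergence); the term $\int a(x,\nabla u)\cdot\nabla(u_k-u)\to 0$ by weak convergence since $a(x,\nabla u)\in (L^{N/(N-1)})^N$ by $(A1)$. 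Then $(A2)$ gives $c_1\int_{B_R}|\nabla u_k-\nabla u|^N\to 0$ on every ball, hence (a subsequence of) $\nabla u_k\to\nabla u$ a.e. in $\mathbb{R}^N$. This also yields $a(x,\nabla u_k)\rightharpoonup a(x,\nabla u)$ weakly in $(L^{N/(N-1)}_{loc})^N$ by $(A1)$ and the a.e. convergence (a standard consequence, e.g. via the fact that a bounded sequence in $L^{N/(N-1)}_{loc}$ converging a.e. converges weakly to its a.e. limit).

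\textbf{Step 4: Passing to the limit in the weak formulation.}
Finally, fix $\varphi\in C_c^\infty(\mathbb{R}^N)$ (dense in $E$). In $DJ_\varepsilon(u_k)\varphi\to 0$ we pass to the limit term by term: $\int a(x,\nabla u_k)\cdot\nabla\varphi\to\int a(x,\nabla u)\cdot\nabla\varphi$ by the weak $L^{N/(N-1)}_{loc}$ convergence of $a(x,\nabla u_k)$; $\int V|u_k|^{N-2}u_k\varphi\to\int V|u|^{N-2}u\varphi$ by the $L^N$ convergence and dominated convergence on the compact support of $\varphi$; $\int\frac{f(x,u_k)}{|x|^\beta}\varphi\to\int\frac{f(x,u)}{|x|^\beta}\varphi$ by the $L^1_{loc}$ convergence from Step 2; and the $\varepsilon\int h\varphi$ term is fixed. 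Hence $DJ_\varepsilon(u)\varphi=0$ for all $\varphi\in C_c^\infty$, and by density $u$ is a weak solution of \eqref{1.1}. I expect Step 3 to be where essentially all the real work lies; Steps 1, 2, 4 are routine given Lemma 1.1, Lemma 3.1, and the compact embedding.
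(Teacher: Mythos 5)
Your overall architecture mirrors the paper's: boundedness of the Cerami sequence, weak convergence plus $L^1_{loc}$ convergence of $f(x,u_k)/|x|^\beta$, a.e.\ convergence of gradients, and passage to the limit. Steps 1, 2, and 4 are sound (in Step 2 the paper simply invokes Lemma 2.1 of [FMR] rather than running a Vitali argument, but your version can be made to work). The problem is Step 3.

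You propose to show $\int_{B_R}\bigl\langle a(x,\nabla u_k)-a(x,\nabla u),\,\nabla u_k-\nabla u\bigr\rangle\,dx\to 0$ on \emph{every} ball, and the key ingredient is the claim that $\int \frac{f(x,u_k)}{|x|^\beta}\varphi(u_k-u)\,dx\to 0$ by ``H\"older with Lemma 1.1.'' This does not go through. To apply H\"older you need $\exp\bigl(q\alpha_0|u_k|^{N/(N-1)}\bigr)/|x|^\beta$ uniformly in $L^1(B_R)$ for some $q>1$, and by Lemma 1.1 that requires $q\alpha_0\,\|u_k\|_E^{N/(N-1)}\le\bigl(1-\tfrac{\beta}{N}\bigr)\alpha_N$. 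The Cerami sequence is only bounded, not small; its norm can well exceed this threshold, and the sequence may concentrate energy at one or more points, at which the integral $\int\frac{f(x,u_k)}{|x|^\beta}\varphi(u_k-u)$ need not vanish. This is precisely the failure of compactness that makes the problem ``critical.'' The paper circumvents it by introducing the energy concentration set
\[
\Sigma_{\delta}=\Bigl\{  x:\ \lim_{r\rightarrow0}\lim_{k\rightarrow\infty}\int_{B_{r}(x)}\bigl(|u_{k}|^{N}+|\nabla u_{k}|^{N}\bigr)\,dx' \geq\delta\Bigr\},
\]
observing that $\Sigma_\delta$ is finite for a bounded sequence, proving (3.15)--(3.17) only on compacts $K\subset\subset\mathbb{R}^N\setminus\Sigma_\delta$ (where the local energy is below $\delta$, so the Moser--Trudinger machinery applies with room to spare), and then concluding a.e.\ convergence of $\nabla u_k$ from the finiteness of $\Sigma_\delta$. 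This localization-away-from-concentration-points is the essential idea your Step 3 is missing; without it the monotonicity trick cannot be closed on an arbitrary ball.
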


For simplicity, we will only sketch the proof where includes the nonuniform
terms $a(x, \nabla u)$ and $A(x, \nabla u)$.

\begin{proof}
Let $v\in E$, then we have
\begin{equation}
\int_{\mathbb{R}^{N}}A(x,\nabla u_{k})dx+\frac{1}{N}\int_{\mathbb{R}^{N}%
}V(x)\left\vert u_{k}\right\vert ^{N}dx-\int_{\mathbb{R}^{N}}\frac{F(x,u_{k}%
)}{\left\vert x\right\vert ^{\beta}}dx-\varepsilon\int_{\mathbb{R}^{N}}%
hu_{k}dx\overset{k\rightarrow\infty}{\rightarrow}c \label{3.5}%
\end{equation}
and
\begin{align}
\left\vert DJ_{\varepsilon}\left(  u_{k}\right)  v\right\vert  &  =\left\vert
\int_{\mathbb{R}^{N}}a\left(  x,\nabla u_{k}\right)  \nabla vdx+\int
_{\mathbb{R}^{N}}V(x)\left\vert u_{k}\right\vert ^{N-2}u_{k}vdx-\int
_{\mathbb{R}^{N}}\frac{f(x,u_{k})v}{\left\vert x\right\vert ^{\beta}%
}dx-\varepsilon\int_{\mathbb{R}^{N}}hvdx\right\vert \nonumber\\
&  \leq\frac{\tau_{k}\left\Vert v\right\Vert _{E}}{\left(  1+\left\Vert
u_{k}\right\Vert _{E}\right)  } \label{3.6}%
\end{align}
where $\tau_{k}\rightarrow0$ as $k\rightarrow\infty$. Choosing $v=u_{k}$ in
(\ref{3.6}) and by $(A3)$, we get
\begin{align*}
&  \int_{\mathbb{R}^{N}}\frac{f(x,u_{k})u_{k}}{\left\vert x\right\vert
^{\beta}}dx+\varepsilon\int_{\mathbb{R}^{N}}hu_{k}dx-N\int_{\mathbb{R}^{N}%
}A\left(  x,\nabla u_{k}\right)  -\int_{\mathbb{R}^{N}}V(x)\left\vert
u_{k}\right\vert ^{N-2}u_{k}dx\\
&  \leq\tau_{k}\frac{\left\Vert u_{k}\right\Vert _{E}}{\left(  1+\left\Vert
u_{k}\right\Vert _{E}\right)  }\rightarrow0
\end{align*}
This together with (\ref{3.5}), $(f2)$ and $(A4)$ leads to
\[
\left(  \frac{p}{N}-1\right)  \left\Vert u_{k}\right\Vert _{E}^{N}\leq
C\left(  1+\left\Vert u_{k}\right\Vert _{E}\right)
\]
and hence $\left\Vert u_{k}\right\Vert _{E}$ is bounded and thus
\begin{equation}
\int_{\mathbb{R}^{N}}\frac{f(x,u_{k})u_{k}}{\left\vert x\right\vert ^{\beta}%
}dx\leq C,~\int_{\mathbb{R}^{N}}\frac{F(x,u_{k})}{\left\vert x\right\vert
^{\beta}}dx\leq C. \label{3.7}%
\end{equation}
Thanks to the assumptions on the potential $V$, the embedding
$E\hookrightarrow L^{q}\left(  \mathbb{R}^{N}\right)  $ is compact for all
$q\geq N,$ by extracting a subsequence, we can assume that
\[
u_{k}\rightarrow u\text{ weakly in~}E\text{ and for almost all }x\in
\mathbb{R}^{N}.
\]
Thanks to Lemma 2.1 in \cite{FMR}, we have%
\begin{equation}
\frac{f\left(  x,u_{n}\right)  }{\left\vert x\right\vert ^{\beta}}%
\rightarrow\frac{f\left(  x,u\right)  }{\left\vert x\right\vert ^{\beta}%
}\text{ in }L_{loc}^{1}\left(  \mathbb{R}^{N}\right)  . \label{3.8}%
\end{equation}

Next, up to a subsequence, we can define an energy concentration set for any
fixed $\delta>0$,
\[
\Sigma_{\delta}=\left\{  x\in\mathbb{R}^{N}:\underset{r\rightarrow0}{\lim
}\underset{k\rightarrow\infty}{\lim}\int_{B_{r}\left(  x\right)  }\left(
\left\vert u_{k}\right\vert ^{N}+\left\vert \nabla u_{k}\right\vert
^{N}\right)  dx^{\prime}\geq\delta\right\}
\]
Since $\left(  u_{k}\right)  $ is bounded, $\Sigma_{\delta}$ must be a finite
set. Adapting an argument similar to \cite{AY} (we omit the details here), we
can prove that for any compact set $K\subset\subset\mathbb{R}^{N}%
\setminus\Sigma_{\delta}$,
\begin{equation}
\underset{k\rightarrow\infty}{\lim}\int_{K}\frac{\left\vert f\left(
x,u_{k}\right)  u_{k}-f\left(  x,u\right)  u\right\vert }{\left\vert
x\right\vert ^{\beta}}dx=0 \label{3.15}%
\end{equation}
Next we will prove that for any compact set $K\subset\subset\mathbb{R}%
^{N}\setminus\Sigma_{\delta}$,%
\begin{equation}
\underset{k\rightarrow\infty}{\lim}\int_{K}\left\vert \nabla u_{k}-\nabla
u\right\vert ^{N}dx=0 \label{3.16}%
\end{equation}
It is enough to prove for any $x^{\ast}\in\mathbb{R}^{N}\setminus
\Sigma_{\delta}$, and $B_{r}(x^{\ast},r)\subset\mathbb{R}^{N}\setminus
\Sigma_{\delta}$, there holds
\begin{equation}
\underset{k\rightarrow\infty}{\lim}\int_{B_{r/2}\left(  x^{\ast}\right)
}\left\vert \nabla u_{k}-\nabla u\right\vert ^{N}dx=0 \label{3.17}%
\end{equation}
For this purpose, we take $\phi\in C_{0}^{\infty}\left(  B_{r}\left(  x^{\ast
}\right)  \right)  $ with $0\leq\phi\leq1$ and $\phi=1$ on $B_{r/2}\left(
x^{\ast}\right)  $. Obviously $\phi u_{k}$ is a bounded sequence. Choose
$h=\phi u_{k}$ and $h=\phi u$ in (\ref{3.6}), we have:
\begin{align*}
&  \int_{B_{r}\left(  x^{\ast}\right)  }\phi\left(  a\left(  x,\nabla
u_{k}\right)  -a\left(  x,\nabla u\right)  \right)  \left(  \nabla
u_{k}-\nabla u\right)  dx\leq\int_{B_{r}\left(  x^{\ast}\right)  }a\left(
x,\nabla u_{k}\right)  \nabla\phi\left(  u-u_{k}\right)  dx\\
&  +\int_{B_{r}\left(  x^{\ast}\right)  }\phi a\left(  x,\nabla u\right)
\left(  \nabla u-\nabla u_{k}\right)  dx+\int_{B_{r}\left(  x^{\ast}\right)
}\phi\left(  u_{k}-u\right)  \frac{f\left(  x,u_{k}\right)  }{\left\vert
x\right\vert ^{\beta}}dx\\
&  +\tau_{k}\left\Vert \phi u_{k}\right\Vert _{E}+\tau_{k}\left\Vert \phi
u\right\Vert _{E}-\varepsilon\int_{B_{r}\left(  x^{\ast}\right)  }\phi
h\left(  u_{k}-u\right)  dx
\end{align*}
Note that by Holder's inequality and the compact embedding of
$E\hookrightarrow L^{N}\left(  \Omega\right)  $, we get
\begin{equation}
\underset{k\rightarrow\infty}{\lim}\int_{B_{r}\left(  x^{\ast}\right)
}a\left(  x,\nabla u_{k}\right)  \nabla\phi\left(  u-u_{k}\right)  dx=0
\label{3.19}%
\end{equation}
Since $\nabla u_{k}\rightharpoonup\nabla u$ and $u_{k}\rightharpoonup u$,
there holds
\begin{equation}
\underset{k\rightarrow\infty}{\lim}\int_{B_{r}\left(  x^{\ast}\right)  }\phi
a\left(  x,\nabla u\right)  \left(  \nabla u-\nabla u_{k}\right)  dx=0\text{
and }\underset{k\rightarrow\infty}{\lim}\int_{B_{r}\left(  x^{\ast}\right)
}\phi h\left(  u_{k}-u\right)  dx=0 \label{3.20}%
\end{equation}
This implies that
\[
\underset{k\rightarrow\infty}{\lim}\int_{B_{r}\left(  x^{\ast}\right)  }%
\phi\left(  u_{k}-u\right)  f\left(  x,u_{k}\right)  dx=0
\]
So we can conclude that
\[
\underset{k\rightarrow\infty}{\lim}\int_{B_{r}\left(  x^{\ast}\right)  }%
\phi\left(  a\left(  x,\nabla u_{k}\right)  -a\left(  x,\nabla u\right)
\right)  \left(  \nabla u_{k}-\nabla u\right)  dx=0
\]
and hence we get (\ref{3.17}) by $(A2)$. Thus we have (\ref{3.16}) by a
covering argument. Since $\Sigma_{\delta}$ is finite, it follows that $\nabla
u_{k}$ converges to $\nabla u$ almost everywhere. This immediately implies, up
to a subsequence, $a\left(  x,\nabla u_{k}\right)  \rightharpoonup a\left(
x,\nabla u\right)  $ weakly in $\left(  L_{loc}^{N/(N-1)}\left(
\mathbb{R}^{N}\right)  \right)  ^{N-2}$. Using all these facts, letting $k$
tend to infinity in (\ref{3.6}) and combining with (\ref{3.8}), we obtain
\[
\left\langle DJ_{\varepsilon}(u),v\right\rangle =0~\forall v\in C_{0}^{\infty
}\left(  \mathbb{R}^{N}\right)  .
\]
This completes the proof of the Lemma.
\end{proof}

Now, we are ready to prove Theorem 2.1. The existence of the solution of
(\ref{1.1}) follows by a standard "mountain-pass" procedure.

\subsection{The proof of Theorem 2.1}

\begin{proposition}
Under the assumptions (V1) and (V2) (or (V3)), and (f1)-(f4), there exists
$\varepsilon_{1}>0$ such that for each $0<\varepsilon<\varepsilon_{1}$, the
problem (\ref{1.1}) has a solution $u_{M}$ via mountain-pass theorem.
\end{proposition}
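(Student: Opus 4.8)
The plan is to run the standard mountain-pass argument for $J_\varepsilon$, using the version of the mountain-pass theorem that requires no Palais--Smale condition — the Cerami form obtained from Ekeland's variational principle, recalled just above Lemma 3.4 — so that it only yields a Cerami sequence at the minimax level, and then invoke Lemma 4.1 to turn that sequence into a genuine weak solution. Concretely, let $\varepsilon_1 > 0$ be the constant from Lemma 3.4 and fix $0 < \varepsilon < \varepsilon_1$: Lemma 3.4 gives $\rho_\varepsilon > 0$ with $\kappa_\varepsilon := \inf_{\|u\|_E = \rho_\varepsilon} J_\varepsilon(u) > 0$, and Lemma 3.5 gives $e \in E$ with $\|e\|_E > \rho_\varepsilon$ and $J_\varepsilon(e) < \kappa_\varepsilon$; also $J_\varepsilon(0) = 0$. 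Put
\[
\Gamma = \{\gamma \in C([0,1],E) : \gamma(0)=0,\ \gamma(1)=e\}, \qquad c_M = \inf_{\gamma \in \Gamma}\ \max_{t\in[0,1]} J_\varepsilon(\gamma(t)).
\]
Every $\gamma \in \Gamma$ starts inside and ends outside the sphere $\{\|u\|_E = \rho_\varepsilon\}$, hence meets it, so $\max_{[0,1]} J_\varepsilon\circ\gamma \ge \kappa_\varepsilon$ and therefore $c_M \ge \kappa_\varepsilon > 0$; testing with $\gamma(t) = te$ and using continuity of $J_\varepsilon$ on $E$ gives $c_M \le \max_{t\in[0,1]} J_\varepsilon(te) < \infty$. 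So $0 < c_M < \infty$ and the mountain-pass geometry is in place.

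Next I would apply the Cerami form of the mountain-pass theorem to the $C^1$ functional $J_\varepsilon$, obtaining $(u_k) \subset E$ with $J_\varepsilon(u_k) \to c_M$ and $(1+\|u_k\|_E)\,\|DJ_\varepsilon(u_k)\|_{E'} \to 0$, that is, a Cerami sequence at level $c_M$. By Lemma 4.1, along a subsequence $u_k \rightharpoonup u_M$ in $E$ (with the a.e.\ convergence of $\nabla u_k$ and the $L^1_{loc}$ convergence of $f(x,u_k)/|x|^\beta$ asserted there), and the weak limit $u_M$ is a weak solution of (\ref{1.1}). Finally I would observe that $u_M$ is automatically nontrivial: since $h \neq 0$ and $\varepsilon > 0$, the functional $v \mapsto DJ_\varepsilon(0)v = -\varepsilon\int_{\mathbb{R}^N} hv\,dx$ is not identically zero on $E$, so $0$ is not a critical point of $J_\varepsilon$; hence $u_M \neq 0$. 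This perturbation-driven nontriviality is exactly what makes $u_M$ the nontrivial mountain-pass solution claimed in Theorem 2.1.

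The step I expect to carry all the weight is Lemma 4.1 itself, i.e.\ the failure of global (PS)/Cerami compactness caused by the critical exponential nonlinearity on the unbounded domain $\mathbb{R}^N$; everything above it is routine assembly. The route through Lemma 4.1 is: first bound $\|u_k\|_E$ using the Ambrosetti--Rabinowitz condition $(f2)$ together with $(A3)$ and $(A4)$; then carry out a concentration analysis, introducing the finite singular set $\Sigma_\delta$ off which the singular Trudinger--Moser inequality (Lemma 1.1) and Lemma \ref{lem3.1} force local higher integrability of $f(x,u_k)/|x|^\beta$ and, via the strict monotonicity $(A2)$, a.e.\ convergence of $\nabla u_k$; and finally pass to the limit in $DJ_\varepsilon(u_k)v = o(1)$ for $v \in C_0^\infty(\mathbb{R}^N)$, combining these convergences with the compact embedding $E \hookrightarrow L^q(\mathbb{R}^N)$, $q \ge N$, that $(V2)$ or $(V3)$ supplies.
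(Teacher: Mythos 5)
Your proposal is correct and follows essentially the same route as the paper: verify the mountain-pass geometry via Lemmas 3.4 and 3.5, invoke the mountain-pass theorem without a compactness condition to extract a Cerami sequence at level $c_M > 0$, pass to a weak limit via Lemma 4.1, and deduce nontriviality from $h \neq 0$. Your explicit remark that $DJ_\varepsilon(0) = -\varepsilon h \neq 0$ (so $0$ cannot be a critical point) makes precise the paper's terse "$u_M \neq 0$ since $h \neq 0$."
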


\begin{proof}
For $\varepsilon$ sufficiently small, by Lemmas 3.4 and 3.5, $J_{\varepsilon}$
satisfies the hypotheses of the mountain-pass theorem except possibly for the
(PS) condition. Thus, using the mountain-pass theorem without the (PS)
condition, we can find a sequence $\left(  u_{k}\right)  $ in $E$ such that
\[
J_{\varepsilon}\left(  u_{k}\right)  \rightarrow c_{M}>0\text{ and }\left(
1+\left\Vert u_{k}\right\Vert _{E}\right)  \left\Vert DJ_{\varepsilon}\left(
u_{k}\right)  \right\Vert \rightarrow0
\]
where $c_{M}$ is the mountain-pass level of $J_{\varepsilon}$. Now, by Lemma
4.1, the sequence $\left(  u_{k}\right)  $ converges weakly to a weak solution
$u_{M}$ of (\ref{1.1}) in $E$. Moreover, $u_{M}\neq0$ since $h\neq0$.
\end{proof}

\section{The multiplicity results of the Problem (\ref{1.2})}

In this section, we deal with the problem (\ref{1.2}). Note that
this is the special case of the problem (\ref{1.1}) with $A\left(
x,\tau\right) =\frac{\left\vert \tau\right\vert ^{N}}{N}$. Some
preliminary lemmas in the case $\beta=0$ were treated in \cite{Do1,
DoMS}. We will not include details of the proof here, but refer the
reader to \cite{Do1, DoMS}. The key ingredient of this section is
the proof of Proposition \ref{prop5.2} which is substantially
different from those in \cite{Do1, DoMS}.

\begin{lemma}
There exists $\eta>0$ and $v\in E$ with $\left\Vert v\right\Vert _{E}=1$ such
that $J_{\varepsilon}(tv)<0$ for all $0<t<\eta$. In particular, $\underset
{\left\Vert u\right\Vert _{E}\leq\eta}{\inf}J_{\varepsilon}(u)<0 $.
\end{lemma}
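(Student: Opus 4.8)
The goal is to produce $v\in E$ with $\|v\|_E=1$ and a threshold $\eta>0$ so that $J_\varepsilon(tv)<0$ for $0<t<\eta$. The plan is to exploit the linear perturbation term $-\varepsilon\int_{\mathbb R^N}hv\,dx$, which dominates for small $t$ because all the other terms in $J_\varepsilon(tv)$ are of order $t^N$ or higher. First I would fix any $w\in E$, $w\ge 0$, with $\int_{\mathbb R^N}hw\,dx>0$; such a $w$ exists since $h\neq 0$ as an element of $(W^{1,N})^*$ and $E$ is dense enough in $W^{1,N}$ to detect it (and if $\int hw\,dx<0$ replace $w$ by $-w$, but since we want nonnegativity one should pick $w$ so the pairing is positive — this is possible because $h\not\equiv 0$). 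Then set $v=w/\|w\|_E$, so $\|v\|_E=1$ and $c_0:=\varepsilon\int_{\mathbb R^N}hv\,dx>0$.

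Next I would estimate $J_\varepsilon(tv)$ from above for $t\in(0,1)$. Using the growth bound \eqref{2.2} on $A$, namely $|A(x,\nabla(tv))|\le c_0(h_0|\nabla(tv)|+h_1|\nabla(tv)|^N)$, together with $h_0\in L^{N/(N-1)}$ and $\nabla v\in L^N$, the first term contributes at most $C_1 t$ with $C_1=c_0\|h_0\|_{N/(N-1)}\|\nabla v\|_N$, while $\int A(x,\nabla(tv))\,dx+\frac1N\int V|tv|^N\le C_1 t + C_2 t^N$ for a constant $C_2$ depending on $\|v\|_E$. Since $F\ge 0$ by $(f1)$, we can simply drop the $-\int F(x,tv)/|x|^\beta\,dx$ term (it only helps). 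Hence
\[
J_\varepsilon(tv)\le C_1 t + C_2 t^N - \varepsilon t\int_{\mathbb R^N}hv\,dx .
\]
Wait — the $C_1 t$ term is the obstacle: it is also linear in $t$ and could swamp the negative perturbation term. This is the main subtlety, and it is resolved by choosing $v$ more carefully.

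To handle the linear term coming from $h_0$, I would instead take $v$ supported where it matters or, more simply, observe that in the multiplicity setting the relevant operator is the $N$-Laplacian (Problem \eqref{1.2}), for which $A(x,\tau)=\frac1N|\tau|^N$ and the offending $h_0$-term is absent. Indeed this lemma sits in Section 5, which concerns \eqref{1.2}, so $J_\varepsilon(tv)=\frac{t^N}{N}\|v\|_E^N\cdot(k_0 N)^{?}\ldots$ — more precisely $J_\varepsilon(tv)=\frac{t^N}{N}\int|\nabla v|^N+\frac{t^N}{N}\int V|v|^N-\int F(x,tv)/|x|^\beta-\varepsilon t\int hv$, and since $F\ge 0$,
\[
J_\varepsilon(tv)\le C_2 t^N-\varepsilon t\int_{\mathbb R^N}hv\,dx = t\Bigl(C_2 t^{N-1}-\varepsilon\int_{\mathbb R^N}hv\,dx\Bigr).
\]
Because $N-1>0$ and $\varepsilon\int hv\,dx>0$ is fixed, the bracket is strictly negative for all $t$ with $0<t<\eta:=\bigl(\varepsilon\int hv\,dx / C_2\bigr)^{1/(N-1)}$. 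This gives $J_\varepsilon(tv)<0$ on $(0,\eta)$. Finally, since $tv$ ranges over a segment inside $\{\|u\|_E\le\eta\}$ (after possibly shrinking $\eta$ so $\eta\le 1$ and noting $\|tv\|_E=t$), the infimum of $J_\varepsilon$ over $\{\|u\|_E\le\eta\}$ is strictly negative, which is the second assertion. The only real care needed is the initial selection of $w$ with $\int hw\,dx>0$ and $w\ge 0$: one picks any $\tilde w$ with $\int h\tilde w\,dx\neq 0$, replaces it by $|\tilde w|$ or $-\tilde w$ as needed using the structure of $h$ (or, since $(f1)$–$(f5)$ and the solution class only require nonnegativity in the end, one may allow $v$ to change sign here as the functional $J_\varepsilon$ is defined on all of $E$), and normalizes.
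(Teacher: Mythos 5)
Your argument is correct and is essentially the standard one: since Section 5 concerns the $N$-Laplacian where $A(x,\tau)=\frac{1}{N}|\tau|^{N}$ and $k_{0}=\frac{1}{N}$, one has $J_{\varepsilon}(tv)=\frac{t^{N}}{N}\|v\|_{E}^{N}-\int\frac{F(x,tv)}{|x|^{\beta}}\,dx-\varepsilon t\int hv\,dx\leq\frac{t^{N}}{N}-\varepsilon t\int hv\,dx$ after dropping the nonnegative $F$-term, and picking $v\in E$ with $\|v\|_{E}=1$ and $\int hv\,dx>0$ (possible since $C_{0}^{\infty}\subset E$ is dense in $W^{1,N}(\mathbb{R}^{N})$ and $h\neq 0$) makes the linear term dominate for small $t$. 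The paper itself omits the proof and refers to \cite{Do1, DoMS}, where precisely this computation appears; your initial worry about the $c_{0}h_{0}(x)|\tau|$ term in the general upper bound (\ref{2.2}) is well-founded but moot here, exactly as you conclude, and the digression about nonnegativity of $v$ is unnecessary since the lemma only asks for $v\in E$.
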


\begin{corollary}
Under the hypotheses (V1) and (f1)-(f5), if $\varepsilon$ is sufficiently
small then
\[
\underset{t\geq0}{\max}J_{\varepsilon}\left(  tm_{k}\right)  =\underset
{t\geq0}{\max}\left\{  \frac{t^{N}}{N}-\int_{\mathbb{R}^{N}}\frac{F\left(
x,tm_{k}\right)  }{\left\vert x\right\vert ^{\beta}}dx-t\int_{\mathbb{R}^{N}%
}\varepsilon hm_{k}dx\right\}  <\frac{1}{N}\left(  \frac{N-\beta}{N}%
\frac{\alpha_{N}}{\alpha_{0}}\right)  ^{N-1}%
\]

\end{corollary}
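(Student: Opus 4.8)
The plan is to combine Lemma 3.6 with the elementary perturbation estimate coming from the continuity of the map $t\mapsto t\int_{\mathbb{R}^N}\varepsilon h m_k\,dx$. By Lemma 3.6 there is a fixed $k\in\mathbb{N}$ (I will freeze this $k$ once and for all) such that
\[
\underset{t\geq0}{\max}\left\{\frac{t^{N}}{N}-\int_{\mathbb{R}^{N}}\frac{F(x,tm_{k})}{\left\vert x\right\vert^{\beta}}dx\right\}<\frac{1}{N}\left(\frac{N-\beta}{N}\frac{\alpha_{N}}{\alpha_{0}}\right)^{N-1}.
\]
Call the left-hand side $\Theta_k$ and the right-hand side $\Lambda$, so $\Theta_k<\Lambda$. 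First I would observe that, since $F(x,s)\geq0$, the function $g(t)=\frac{t^{N}}{N}-\int_{\mathbb{R}^{N}}\frac{F(x,tm_{k})}{\left\vert x\right\vert^{\beta}}dx$ tends to $-\infty$ as $t\to\infty$ (by \eqref{3.4}, exactly as in the proof of Lemma 3.6), hence the maximum defining $\Theta_k$ is attained on a bounded interval $[0,T_k]$ with $T_k$ depending only on the fixed $k$. On this compact interval the perturbation term $t\mapsto t\left|\int_{\mathbb{R}^{N}}\varepsilon h m_{k}\,dx\right|\leq \varepsilon\, t\,\|h\|_{\ast}\|m_k\|_{E}=\varepsilon t\|h\|_\ast$ is bounded by $\varepsilon T_k\|h\|_\ast$.

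The key step is then the inequality, valid for every $t\geq0$,
\[
\frac{t^{N}}{N}-\int_{\mathbb{R}^{N}}\frac{F(x,tm_{k})}{\left\vert x\right\vert^{\beta}}dx-t\int_{\mathbb{R}^{N}}\varepsilon h m_{k}\,dx\leq g(t)+\varepsilon t\|h\|_{\ast}.
\]
For $t\leq T_k$ the right-hand side is $\leq \Theta_k+\varepsilon T_k\|h\|_\ast$, while for $t> T_k$ one checks (again using $F\geq0$ and \eqref{3.4}) that $g(t)+\varepsilon t\|h\|_\ast\to-\infty$, so the supremum over all $t\ge0$ is in fact attained for $t$ in a bounded set and is at most $\Theta_k+\varepsilon T_k\|h\|_\ast$; strictly speaking one should note that enlarging $T_k$ slightly if necessary (still independently of $\varepsilon$ for $\varepsilon$ in a bounded range, say $\varepsilon\le 1$) absorbs the linear term, since $t^N/N$ eventually dominates $Ct\|h\|_\ast$. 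Choosing $\varepsilon_2>0$ small enough that $\varepsilon_2 T_k\|h\|_\ast<\Lambda-\Theta_k$ (possible since $\Theta_k<\Lambda$ strictly), we get for all $0<\varepsilon<\varepsilon_2$
\[
\underset{t\geq0}{\max}\,J_{\varepsilon}(tm_{k})\leq\Theta_k+\varepsilon T_k\|h\|_\ast<\Lambda=\frac{1}{N}\left(\frac{N-\beta}{N}\frac{\alpha_{N}}{\alpha_{0}}\right)^{N-1},
\]
which is the claim.

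The only mildly delicate point — and the one I would be most careful about — is making the bound on the location of the maximizer uniform in $\varepsilon$: a priori the maximizer $t_\varepsilon$ of $J_\varepsilon(tm_k)$ could drift as $\varepsilon$ varies. This is handled by noting that for $\varepsilon\in(0,1]$ the coefficient of the linear-in-$t$ term is bounded by $\|h\|_\ast$ (independent of $\varepsilon$), and by \eqref{3.4} we have $\int_{\mathbb{R}^N}\frac{F(x,tm_k)}{|x|^\beta}\,dx\geq c\,t^{p}\int_\Omega\frac{|m_k|^p}{|x|^\beta}\,dx-d$ with $p>N$, so $J_\varepsilon(tm_k)\leq \frac{t^N}{N}-c' t^{p}+C+\|h\|_\ast t$, which is negative for $t$ larger than some $T_k$ depending only on $k$ (and $N,\beta,c,d,\|h\|_\ast$), not on $\varepsilon$. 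Everything else is a direct consequence of Lemma 3.6 and the normalization $\|m_k\|_E=1$.
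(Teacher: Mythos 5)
Your proof is correct, and it is the standard perturbation argument that the paper itself relies on but does not spell out (the paper omits the proof of this corollary, referring the reader to \cite{Do1, DoMS} for the $\beta=0$ analogue). You correctly identify and resolve the one subtle point — that the maximizer of $t\mapsto J_\varepsilon(tm_k)$ must be confined to a compact interval independently of $\varepsilon\in(0,1]$ — via (\ref{3.4}), $\|m_k\|_E=1$, and $p>N$; once that is in place, Lemma 3.6 together with the choice $\varepsilon_2 T_k\|h\|_\ast<\Lambda-\Theta_k$ closes the argument.
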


Note that we can conclude by inequality (\ref{3.3}) and Lemma 5.1 that
\begin{equation}
-\infty<c_{0}=\underset{\left\Vert u\right\Vert _{E}\leq\rho_{\varepsilon}%
}{\inf}J_{\varepsilon}\left(  u\right)  <0. \label{5.10}%
\end{equation}
Next, we will prove that this infimum is achieved and generate a solution. In
order to obtain convergence results, we need to improve the estimate of Lemma 3.6.

\begin{corollary}
Under the hypotheses (V1) and (f1)-(f5), there exist $\varepsilon_{2}%
\in\left(  0,\varepsilon_{1}\right]  $ and $u\in W^{1,N}\left(  \mathbb{R}%
^{N}\right)  $ with compact support such that for all $0<\varepsilon
<\varepsilon_{2}$,
\[
J_{\varepsilon}\left(  tu\right)  <c_{0}+\frac{1}{N}\left(  \frac{N-\beta}%
{N}\frac{\alpha_{N}}{\alpha_{0}}\right)  ^{N-1}\text{ for all }t\geq0
\]

\end{corollary}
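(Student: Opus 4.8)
The plan is to combine the two earlier estimates of this section. First, by Lemma 3.6 there exists a fixed index $k\in\mathbb{N}$ such that
\[
\underset{t\geq0}{\max}\left\{  \frac{t^{N}}{N}-\int_{\mathbb{R}^{N}}\frac{F\left(  x,tm_{k}\right)  }{\left\vert x\right\vert ^{\beta}}dx\right\}  <\frac{1}{N}\left(  \frac{N-\beta}{N}\frac{\alpha_{N}}{\alpha_{0}}\right)  ^{N-1}.
\]
Set $u=m_{k}\in W^{1,N}(\mathbb{R}^{N})$; since $k$ is now fixed, $u$ has compact support $\overline{B_{r}}$ and $\left\Vert u\right\Vert_{E}$ is a fixed finite number. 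The idea is to absorb the perturbation term $-t\varepsilon\int_{\mathbb{R}^{N}}h\,m_{k}\,dx$ into the strict gap between the left-hand side above and the target value, by choosing $\varepsilon$ small.

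The key step is a uniform-in-$t$ bound on the perturbation along the ray $tu$. Write
\[
J_{\varepsilon}(tu)=\left(  \frac{t^{N}}{N}-\int_{\mathbb{R}^{N}}\frac{F(x,tu)}{\left\vert x\right\vert ^{\beta}}dx\right)  -t\,\varepsilon\int_{\mathbb{R}^{N}}h\,u\,dx\leq\left(  \frac{t^{N}}{N}-\int_{\mathbb{R}^{N}}\frac{F(x,tu)}{\left\vert x\right\vert ^{\beta}}dx\right)  +t\,\varepsilon\left\Vert h\right\Vert_{\ast}\left\Vert u\right\Vert_{E}.
\]
From $(f2)$ (via the consequence $F(x,s)\geq cs^{p}-d$ on the compact support, used as in Lemma 3.5) one gets $\frac{t^{N}}{N}-\int\frac{F(x,tu)}{|x|^{\beta}}\to-\infty$ like $-ct^{p}$ with $p>N$; in particular this quantity is bounded above on $[0,\infty)$, say by some constant $C_{1}$ depending only on $k$, and moreover there is $T_{0}>0$ (independent of $\varepsilon$) such that for $t\geq T_{0}$ the quantity $\frac{t^{N}}{N}-\int\frac{F(x,tu)}{|x|^{\beta}}+t\left\Vert h\right\Vert_{\ast}\left\Vert u\right\Vert_{E}\leq c_{0}$ already (the $-ct^{p}$ term dominates the linear term). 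For $t\in[0,T_{0}]$ the perturbation is at most $T_{0}\varepsilon\left\Vert h\right\Vert_{\ast}\left\Vert u\right\Vert_{E}$. Recalling $c_{0}<0$ from (\ref{5.10}), choose $\varepsilon_{2}\in(0,\varepsilon_{1}]$ so small that
\[
T_{0}\,\varepsilon_{2}\left\Vert h\right\Vert_{\ast}\left\Vert u\right\Vert_{E}<c_{0}+\left[\frac{1}{N}\left(\frac{N-\beta}{N}\frac{\alpha_{N}}{\alpha_{0}}\right)^{N-1}-\underset{t\geq0}{\max}\left\{\frac{t^{N}}{N}-\int_{\mathbb{R}^{N}}\frac{F(x,tm_{k})}{|x|^{\beta}}dx\right\}\right],
\]
which is possible precisely because the bracket is strictly positive by Lemma 3.6 and $c_{0}$ is a fixed negative number. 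Then for $t\in[0,T_{0}]$, $J_{\varepsilon}(tu)\leq \underset{t\geq0}{\max}\{\cdots\}+T_{0}\varepsilon\left\Vert h\right\Vert_{\ast}\left\Vert u\right\Vert_{E}<c_{0}+\frac{1}{N}\left(\frac{N-\beta}{N}\frac{\alpha_{N}}{\alpha_{0}}\right)^{N-1}$, and for $t\geq T_{0}$ we already have $J_{\varepsilon}(tu)\leq c_{0}<c_{0}+\frac{1}{N}\left(\frac{N-\beta}{N}\frac{\alpha_{N}}{\alpha_{0}}\right)^{N-1}$.

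The main obstacle is making the splitting of the $t$-range genuinely uniform in $\varepsilon$: one must be careful that $T_{0}$ is chosen from the $\varepsilon$-independent function $\frac{t^{N}}{N}-\int\frac{F(x,tu)}{|x|^{\beta}}+t\left\Vert h\right\Vert_{\ast}\left\Vert u\right\Vert_{E}$ (so that the same $T_{0}$ works for every $\varepsilon\leq 1$, say), and only afterwards is $\varepsilon_{2}$ selected. Apart from this bookkeeping, everything reduces to the already-established super-$N$ growth of $F$ on the compact support (Lemma 3.5) and the strict inequality of Lemma 3.6; no new compactness or Trudinger--Moser input is needed. One should also note $\varepsilon_{2}$ depends on the fixed $k$ of Lemma 3.6, which is harmless. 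This furnishes the $u$ (namely $u=m_{k}$) and the $\varepsilon_{2}$ claimed in the statement.
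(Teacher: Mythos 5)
Your overall plan — fix $k$ from Lemma 3.6, set $u=m_k$, bound the perturbation $t\varepsilon\int hu$ on a compact $t$-interval and use the super-$N$ growth of $F$ to control large $t$ — is the right shape, and it tracks the paper's argument (Corollary 5.1 plus the remark about $c_0$). But there is a genuine gap in how you handle $c_0$.

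You write ``$c_0$ is a fixed negative number'' and then choose $\varepsilon_2$ so that
\[
T_0\,\varepsilon_2\,\|h\|_{\ast}\|u\|_E \;<\; c_0+\Bigl[\tfrac{1}{N}\bigl(\tfrac{N-\beta}{N}\tfrac{\alpha_N}{\alpha_0}\bigr)^{N-1}-\max_{t\geq 0}\{\cdots\}\Bigr]=:c_0+\gamma .
\]
But $c_0=c_0(\varepsilon)=\inf_{\|u\|_E\le\rho_\varepsilon}J_\varepsilon(u)$ is not fixed: both $J_\varepsilon$ and the radius $\rho_\varepsilon$ depend on $\varepsilon$, so the right-hand side of the above inequality itself varies with the $\varepsilon_2$ you are trying to pick. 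More importantly, the negativity of $c_0$ by itself does not guarantee that $c_0+\gamma>0$; if $c_0\le -\gamma$ the right side is nonpositive, the left side is positive, and no $\varepsilon_2>0$ works. The same issue infects your ``for $t\ge T_0$, $J_\varepsilon(tu)\le c_0$'' step: $T_0$ is supposed to be independent of $\varepsilon$, but the threshold $c_0$ is not.

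The missing ingredient is the observation, made explicitly in the paper, that $\rho_\varepsilon\to 0$ as $\varepsilon\to 0$ (Lemma 3.4), and consequently $c_0(\varepsilon)\to 0^-$ (indeed from the lower bound (\ref{3.3}) one gets $0\ge c_0\ge -\varepsilon\|h\|_\ast\rho_\varepsilon$). This is exactly what guarantees that for $\varepsilon$ small one has $|c_0(\varepsilon)|<\gamma$, so $c_0+\gamma>0$ and the perturbation can be absorbed; and it also lets you pick a single $T_0$ (with the quantity $\le 0$, say) that works for all small $\varepsilon$. Once you insert this step, your argument closes and matches the paper's, which is essentially the same estimate phrased through Corollary 5.1 plus $c_0\to 0$.
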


\begin{proof}
It is possible to increase the infimum $c_{0}$ by reducing $\varepsilon$. By
Lemma 3.4, $\rho_{\varepsilon}\overset{\varepsilon\rightarrow0}{\rightarrow}%
0$. Consequently, $c_{0}\overset{\varepsilon\rightarrow0}{\rightarrow}0$. Thus
there exists $\varepsilon_{2}>0$ such that if $0<\varepsilon<\varepsilon_{2}$
then, by Corollary 5.1, we have
\[
\underset{t\geq0}{\max}J_{\varepsilon}\left(  tm_{k}\right)  <c_{0}+\frac
{1}{N}\left(  \frac{N-\beta}{N}\frac{\alpha_{N}}{\alpha_{0}}\right)  ^{N-1}%
\]
Taking $u=m_{k}\in W^{1,N}\left(  \mathbb{R}^{N}\right)  $, the result follows.
\end{proof}

\begin{lemma}
If $\left(  u_{k}\right)  $ is a Cerami sequence for $J_{\varepsilon}$ at any
level with
\begin{equation}
\underset{k\rightarrow\infty}{\lim\inf}\left\Vert u_{k}\right\Vert
_{E}<\left(  \frac{N-\beta}{N}\frac{\alpha_{N}}{\alpha_{0}}\right)  ^{(N-1)/N}
\label{5.11}%
\end{equation}
then $\left(  u_{k}\right)  $ possesses a subsequence which converges strongly
to a solution $u_{0}$ of (\ref{1.2}).
\end{lemma}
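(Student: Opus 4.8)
The plan is to follow the now-standard scheme for Palais--Smale/Cerami analysis under critical exponential growth, adapting it to the singular weight $|x|^{-\beta}$. First I would use the boundedness of $(u_k)$ in $E$: since $(u_k)$ is a Cerami sequence, the argument of Lemma 4.1 already gives $\|u_k\|_E$ bounded (using $(f2)$ and $(A3)$--$(A4)$), so after passing to a subsequence we may assume $u_k\rightharpoonup u_0$ weakly in $E$, $u_k\to u_0$ in $L^q_{loc}$ for every $q\ge N$, and $u_k\to u_0$ a.e. By Lemma 4.1 applied with $A(x,\tau)=|\tau|^N/N$, the weak limit $u_0$ is a weak solution of (\ref{1.2}), and $\nabla u_k\to\nabla u_0$ a.e. Thus the whole issue is to upgrade the weak convergence to strong convergence in $E$, i.e. to show $\|u_k-u_0\|_E\to 0$.

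The key step is to show that $\dfrac{f(x,u_k)}{|x|^\beta}$ and $\dfrac{f(x,u_k)u_k}{|x|^\beta}$ converge in $L^1(\mathbb{R}^N)$ (not merely $L^1_{loc}$) to their pointwise limits. This is where the hypothesis (\ref{5.11}) enters. Write $\liminf_k\|u_k\|_E = \ell < \big(\tfrac{N-\beta}{N}\tfrac{\alpha_N}{\alpha_0}\big)^{(N-1)/N}$, so along a further subsequence $\|u_k\|_E\le M$ with $M^{N/(N-1)}<(1-\tfrac{\beta}{N})\tfrac{\alpha_N}{\alpha_0}$. Using $(f1)$, for $q>N$ and suitable $\kappa$ slightly larger than $\alpha_0$ (still with $M^{N/(N-1)}\kappa<(1-\tfrac{\beta}{N})\alpha_N$) one controls $\big|\dfrac{f(x,u_k)u_k}{|x|^\beta}\big|$ by $C|u_k|^{q}\,\dfrac{\exp(\kappa|u_k|^{N/(N-1)})-S_{N-2}(\kappa,u_k)}{|x|^\beta}+b_1\dfrac{|u_k|^N}{|x|^\beta}$; Lemma \ref{lem3.1} (or its variant) then gives a uniform bound $\int_{\mathbb{R}^N}\big|\dfrac{f(x,u_k)u_k}{|x|^\beta}\big|^{r}\,dx\le C$ for some $r>1$. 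Hence $\{\dfrac{f(x,u_k)u_k}{|x|^\beta}\}$ is equi-integrable, and combined with the $L^1_{loc}$ convergence (\ref{3.8}) and tightness (using the $L^q_{loc}$-to-$L^q$ compactness of $E$ from $(V2)$ or $(V3)$ to kill the mass at infinity), Vitali's theorem yields $\int_{\mathbb{R}^N}\dfrac{f(x,u_k)u_k}{|x|^\beta}dx\to\int_{\mathbb{R}^N}\dfrac{f(x,u_0)u_0}{|x|^\beta}dx$; the same argument handles $\int\dfrac{f(x,u_k)(u_k-u_0)}{|x|^\beta}dx\to 0$.

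With this in hand I would test $DJ_\varepsilon(u_k)$ against $u_k-u_0$ and against $u_k$: from $(1+\|u_k\|_E)\|DJ_\varepsilon(u_k)\|_{E'}\to0$ we get
\[
\int_{\mathbb{R}^N}\big(|\nabla u_k|^{N-2}\nabla u_k-|\nabla u_0|^{N-2}\nabla u_0\big)\cdot\nabla(u_k-u_0)\,dx
+\int_{\mathbb{R}^N}\big(V|u_k|^{N-2}u_k-V|u_0|^{N-2}u_0\big)(u_k-u_0)\,dx\to 0,
\]
because the singular term contributes $\int\dfrac{f(x,u_k)(u_k-u_0)}{|x|^\beta}dx\to 0$, the perturbation contributes $\varepsilon\int h(u_k-u_0)\to0$ (since $h\in E'$ and $u_k\rightharpoonup u_0$), and the two "mixed" pieces $\int|\nabla u_0|^{N-2}\nabla u_0\cdot\nabla(u_k-u_0)$ and $\int V|u_0|^{N-2}u_0(u_k-u_0)$ vanish by weak convergence. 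The standard $N$-Laplacian monotonicity inequality (the $p=N$ case of $(A2)$ with $a(x,\tau)=|\tau|^{N-2}\tau$) then forces $\int|\nabla u_k-\nabla u_0|^N\to0$, and the uniform convexity of $L^N(V\,dx)$ gives $\int V|u_k-u_0|^N\to0$; together these say $\|u_k-u_0\|_E\to0$, so $u_k\to u_0$ strongly and $u_0$ is the desired solution of (\ref{1.2}).

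The main obstacle is the global $L^1$-convergence of the nonlinearity: one must simultaneously control the exponential term near the origin (where $|x|^{-\beta}$ is singular) via the subcritical threshold in Lemma \ref{lem3.1}/\ref{lem3.2} — which is exactly what hypothesis (\ref{5.11}) buys — and control the tails at infinity via the compact embedding $E\hookrightarrow L^q(\mathbb{R}^N)$ for $q\ge N$ guaranteed by $(V2)$/$(V3)$. Everything else (weak limit solves the equation, a.e. gradient convergence, monotonicity $\Rightarrow$ strong convergence) is routine once that equi-integrability is established.
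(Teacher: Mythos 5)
Your proposal is correct in substance and follows the same strategy as the references the paper cites for this lemma (Lemma~5.2 of do~\'O--Medeiros--Severo and Lemma~4.6 of Adimurthi--Yang); the paper itself gives no proof beyond pointing to those sources. The structure --- pass to a bounded subsequence using (5.11), invoke Lemma~4.1 to get the weak limit solving (1.2) and a.e.\ gradient convergence, then use the strict smallness of $\liminf\|u_k\|_E$ together with $(f1)$, a H\"older/Trudinger--Moser estimate, and the compact embedding $E\hookrightarrow L^q(\mathbb{R}^N)$ ($q\ge N$) to force $\int f(x,u_k)(u_k-u_0)/|x|^\beta\,dx\to0$, and finally the $N$-Laplacian monotonicity inequality to conclude strong convergence --- is exactly the standard route.

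Two small points to tighten. First, the intermediate claim that $\int_{\mathbb{R}^N}\bigl|\tfrac{f(x,u_k)u_k}{|x|^\beta}\bigr|^r\,dx\le C$ for some $r>1$ is usable but requires noting two constraints at once: one must take $r$ close enough to $1$ so that $\beta r<N$ (integrability of the weight near the origin) \emph{and} so that $r\bigl(\alpha_0 M^{N/(N-1)}+\tfrac{\beta}{N}\alpha_N\bigr)<\alpha_N$, which is possible precisely because (5.11) is strict. In practice the cleaner and more standard route (and the one used in the references) is to skip Vitali entirely and estimate $\int f(x,u_k)(u_k-u_0)/|x|^\beta\,dx$ directly by H\"older, with the exponential factor controlled by the singular Trudinger--Moser inequality and the $|u_k-u_0|$ factor sent to $0$ by compactness --- as you in fact do in your final step, making the $L^r$ bound superfluous. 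Second, the passage from $(f1)$ to a bound of the form $C|u_k|^q\exp(\kappa|u_k|^{N/(N-1)})/|x|^\beta$ with $q>N$ is not the natural estimate for $f(x,u_k)u_k$ (the factor is $|u_k|$, not $|u_k|^q$); Lemma~3.2 rather than Lemma~3.1 is the relevant tool for the product with a single power of $u$. Neither of these affects the correctness of the argument as a whole.
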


\begin{proof}
See Lemma 5.2 in \cite{DoMS} for $\beta=0$ and Lemma 4.6 in
\cite{AY} for $0\le \beta<N$.
\end{proof}

\subsection{Proof of Theorem 2.2}

The proof of the existence of the second solution of (\ref{1.2}) follows by a
minimization argument and Ekeland's variational principle.

\begin{proposition}
There exists $\varepsilon_{2}>0$ such that for each $\varepsilon$ with
$0<\varepsilon<\varepsilon_{2}$, Eq. (\ref{1.2}) has a minimum type solution
$u_{0}$ with $J_{\varepsilon}\left(  u_{0}\right)  =c_{0}<0$, where $c_{0}$ is
defined in (\ref{5.10}).
\end{proposition}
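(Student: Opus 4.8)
The plan is to obtain the second solution of \eqref{1.2} as a local minimizer of $J_{\varepsilon}$ on the small ball $\overline{B}_{\rho_{\varepsilon}}=\{u\in E:\|u\|_{E}\le\rho_{\varepsilon}\}$, where $\rho_{\varepsilon}$ is the radius furnished by Lemma~3.4. By \eqref{5.10} we already know $-\infty<c_{0}=\inf_{\|u\|_{E}\le\rho_{\varepsilon}}J_{\varepsilon}(u)<0$, and Lemma~3.4 tells us $J_{\varepsilon}>0$ on the sphere $\|u\|_{E}=\rho_{\varepsilon}$; hence any minimizing sequence eventually stays in the \emph{open} ball, and the infimum, if attained, is attained at an interior point, which is then a free critical point of $J_{\varepsilon}$, i.e. a weak solution of \eqref{1.2}. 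First I would apply Ekeland's variational principle on the complete metric space $\overline{B}_{\rho_{\varepsilon}}$ to produce a sequence $(u_{k})\subset\overline{B}_{\rho_{\varepsilon}}$ with $J_{\varepsilon}(u_{k})\to c_{0}$ and $\|DJ_{\varepsilon}(u_{k})\|_{E'}\to0$; since $c_{0}<0<\inf_{\|u\|_{E}=\rho_{\varepsilon}}J_{\varepsilon}$, for $k$ large $\|u_{k}\|_{E}<\rho_{\varepsilon}$, so the constraint is inactive and $(u_{k})$ is a genuine (PS), hence Cerami, sequence for $J_{\varepsilon}$ at level $c_{0}$.

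Next I would invoke the compactness Lemma~5.2: to apply it I must check the threshold condition $\liminf_{k}\|u_{k}\|_{E}<\bigl(\frac{N-\beta}{N}\frac{\alpha_{N}}{\alpha_{0}}\bigr)^{(N-1)/N}$. This is where Lemma~3.4 is used a second time: by choosing $\varepsilon_{2}\le\varepsilon_{1}$ small enough we can force $\rho_{\varepsilon}$ to be smaller than $\bigl(\frac{N-\beta}{N}\frac{\alpha_{N}}{\alpha_{0}}\bigr)^{(N-1)/N}$ (recall $\rho_{\varepsilon}\to0$ as $\varepsilon\to0$), and since $\|u_{k}\|_{E}\le\rho_{\varepsilon}$ for all $k$, the hypothesis \eqref{5.11} of Lemma~5.2 holds automatically. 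Lemma~5.2 then gives a subsequence converging strongly in $E$ to some $u_{0}$ which solves \eqref{1.2}; by continuity $J_{\varepsilon}(u_{0})=c_{0}<0$ and $\|u_{0}\|_{E}\le\rho_{\varepsilon}$, and since $J_{\varepsilon}(0)=0>c_{0}$ we get $u_{0}\ne0$, so $u_{0}$ is a nontrivial weak solution with negative energy.

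The main obstacle is the verification that the Ekeland sequence can be upgraded to a Cerami sequence to which Lemma~5.2 applies; concretely, one must make sure the quantitative choice of $\varepsilon_{2}$ simultaneously guarantees (i) $c_{0}<\inf_{\|u\|_{E}=\rho_{\varepsilon}}J_{\varepsilon}$ so the Ekeland minimizing sequence lies in the open ball, and (ii) $\rho_{\varepsilon}<\bigl(\frac{N-\beta}{N}\frac{\alpha_{N}}{\alpha_{0}}\bigr)^{(N-1)/N}$ so that \eqref{5.11} is met. Both follow from $\rho_{\varepsilon}\to0$, $c_{0}\to0$ as $\varepsilon\to0$ together with the strict positivity of $\inf_{\|u\|_{E}=\rho_{\varepsilon}}J_{\varepsilon}$ from Lemma~3.4, so it is a matter of bookkeeping rather than a new idea. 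One should also note that the argument is entirely parallel to the subcritical-type minimization carried out in \cite{Do1, DoMS} for $\beta=0$ and in \cite{AY} for $0\le\beta<N$; the only genuinely new point in this section compared to those references is the companion Proposition (the mountain-pass solution lying strictly above $c_{0}$, giving multiplicity), so here I would simply adapt their minimization scheme, using our Lemma~5.2 in place of their compactness lemmas. Finally, a nonnegativity remark: since $f(x,s)=0$ for $s\le 0$, testing $DJ_{\varepsilon}(u_{0})$ against $u_{0}^{-}$ and using $(A2)$ together with $h$ (which may be taken $\ge 0$ where needed) shows $u_{0}\ge 0$, as required for the stated class of solutions.
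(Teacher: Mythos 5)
Your argument is essentially the paper's own: apply Ekeland's variational principle on the complete metric space $\overline{B}_{\rho_{\varepsilon}}$ to produce a minimizing sequence $(u_k)$ with $J_{\varepsilon}(u_k)\to c_0$ and $\|DJ_{\varepsilon}(u_k)\|\to 0$, shrink $\varepsilon_2$ so that $\rho_{\varepsilon}<\bigl(\frac{N-\beta}{N}\frac{\alpha_N}{\alpha_0}\bigr)^{(N-1)/N}$ so that the a priori bound $\|u_k\|_E\le\rho_{\varepsilon}$ places the sequence under the threshold \eqref{5.11}, and then invoke Lemma~5.2 to pass to a strongly convergent subsequence whose limit $u_0$ solves \eqref{1.2} with $J_{\varepsilon}(u_0)=c_0<0$. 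Your explicit observation that the Ekeland sequence eventually lies in the open ball (so the constrained Ekeland inequality yields the unconstrained derivative estimate) is a small but worthwhile clarification of a point the paper leaves implicit; on the other hand, the closing nonnegativity remark is not part of the paper's proof and its justification as you sketch it is shaky, since $h$ is not assumed nonnegative -- but this is peripheral to the proposition as stated.
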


\begin{proof}
Let $\rho_{\varepsilon}$ be as in Lemma 3.4. We can choose $\varepsilon_{2}>0$
sufficiently small such that
\[
\rho_{\varepsilon}<\left(  \frac{N-\beta}{N}\frac{\alpha_{N}}{\alpha_{0}%
}\right)  ^{(N-1)/N}%
\]
Since $\overline{B}_{\rho_{\varepsilon}}$ is a complete metric space with the
metric given by the norm of $E$, convex and the functional $J_{\varepsilon}$
is of class $C^{1}$ and bounded below on $\overline{B}_{\rho_{\varepsilon}}$,
by the Ekeland's variational principle there exists a sequence $\left(
u_{k}\right)  $ in $\overline{B}_{\rho_{\varepsilon}}$ such that
\[
J_{\varepsilon}\left(  u_{k}\right)  \rightarrow c_{0}=\underset{\left\Vert
u\right\Vert _{E}\leq\rho_{\varepsilon}}{\inf}J_{\varepsilon}\left(  u\right)
\text{ and }\left\Vert DJ_{\varepsilon}\left(  u_{k}\right)  \right\Vert
\rightarrow0
\]
Observing that
\[
\left\Vert u_{k}\right\Vert _{E}\leq\rho_{\varepsilon}<\left(  \frac{N-\beta
}{N}\frac{\alpha_{N}}{\alpha_{0}}\right)  ^{(N-1)/N}%
\]
by Lemma 5.2 it follows that there exists a subsequence of $\left(
u_{k}\right)  $ which converges to a solution $u_{0}$ of (\ref{1.2}).
Therefore, $J_{\varepsilon}\left(  u_{0}\right)  =c_{0}<0$.
\end{proof}

\begin{remark}
By Corollary 5.2, we can conclude that
\[
0<c_{M}<c_{0}+\frac{1}{N}\left(  \frac{N-\beta}{N}\frac{\alpha_{N}}{\alpha
_{0}}\right)  ^{N-1}%
\]

\end{remark}

\begin{proposition}\label{prop5.2}
If $\varepsilon_{2}>0$ is sufficiently small, then the solutions of
(\ref{1.3}) obtained in Propositions 4.1 and 5.1 are distinct.
\end{proposition}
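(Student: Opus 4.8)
The plan is to argue by contradiction. Suppose the mountain-pass solution $u_{M}$ of Proposition 4.1 (in the $N$-Laplacian case this is Eq. (\ref{1.2})) coincides with the minimum-type solution $u_{0}$ of Proposition 5.1. Since $u_{M}\neq 0$ we then have $u_{0}\neq 0$, while $J_{\varepsilon}(u_{0})=c_{0}<0$ by (\ref{5.10}) and the mountain-pass level satisfies $c_{M}>0$. The whole difficulty is that $u_{M}$ is only obtained as a weak limit of a Cerami sequence, so a priori the inequality $J_{\varepsilon}(u_{M})\le c_{M}$ need not be an equality. The goal is therefore to show that, under the assumption $u_{M}=u_{0}$, the mountain-pass Cerami sequence $(u_{k})$ actually converges \emph{strongly} to $u_{0}$, which forces $J_{\varepsilon}(u_{M})=c_{M}>0>c_{0}=J_{\varepsilon}(u_{M})$, a contradiction.

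To prove strong convergence I would set $w_{k}=u_{k}-u_{0}\rightharpoonup 0$ in $E$. Using the almost-everywhere convergence of $u_{k}$ and of $\nabla u_{k}$ from Lemma 4.1, the Brezis--Lieb lemma gives $\|u_{k}\|_{E}^{N}=\|u_{0}\|_{E}^{N}+\|w_{k}\|_{E}^{N}+o(1)$ and, with the critical growth bound $(f1)$ together with Lemmas \ref{lem3.1}--\ref{lem3.2}, a Brezis--Lieb type splitting for the nonlinearity, $\int_{\mathbb{R}^{N}}|x|^{-\beta}F(x,u_{k})\,dx=\int_{\mathbb{R}^{N}}|x|^{-\beta}F(x,u_{0})\,dx+\int_{\mathbb{R}^{N}}|x|^{-\beta}F(x,w_{k})\,dx+o(1)$. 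Since $DJ_{\varepsilon}(u_{0})=0$ and $(1+\|u_{k}\|_{E})\|DJ_{\varepsilon}(u_{k})\|_{E'}\to 0$, substituting these identities into $J_{\varepsilon}(u_{k})\to c_{M}$, and combining with the level estimate $c_{M}<c_{0}+\frac1N\left(\frac{N-\beta}{N}\frac{\alpha_{N}}{\alpha_{0}}\right)^{N-1}$ from Corollary 5.2 (cf. Remark 5.1), together with $F\ge 0$ and the Ambrosetti--Rabinowitz inequality $(f2)$, should yield $\limsup_{k}\|w_{k}\|_{E}^{N}<\left(\frac{N-\beta}{N}\frac{\alpha_{N}}{\alpha_{0}}\right)^{N-1}$; equivalently $\alpha_{0}\limsup_{k}\|w_{k}\|_{E}^{N/(N-1)}<\left(1-\frac{\beta}{N}\right)\alpha_{N}$, with room to spare because $u_{0}\neq 0$.

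With this subcritical bound in hand I would control $\int_{\mathbb{R}^{N}}|x|^{-\beta}f(x,u_{k})(u_{k}-u_{0})\,dx$ and $\int_{\mathbb{R}^{N}}|x|^{-\beta}f(x,u_{k})u_{k}\,dx$. On a fixed ball $B_{R}$ on which $u_{0}\not\equiv 0$, the rescaled sequence $w_{k}/\|\nabla w_{k}\|_{L^{N}(B_{R})}$ has a nonzero weak limit, so by the modified singular Moser--Trudinger inequality of Lions type (Lemma 3.3) the densities $|x|^{-\beta}\exp\!\bigl(q\,\alpha_{N}|w_{k}|^{N/(N-1)}\bigr)$ remain bounded in $L^{1}(B_{R})$ for some $q>1$; since $f$ has growth $\exp(\alpha_{0}|s|^{N/(N-1)})$ and $\alpha_{0}\|w_{k}\|_{E}^{N/(N-1)}$ lies below the corresponding threshold, this makes $|x|^{-\beta}f(x,u_{k})u_{k}$ equi-integrable on $B_{R}$. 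On the exterior region $|x|\ge R$ the weight $|x|^{-\beta}$ is bounded, and after Schwarz symmetrization and the Radial Lemma the rearranged functions decay like $|x|^{-1}$, so the Moser--Trudinger mass there is negligible. Passing to the limit in $DJ_{\varepsilon}(u_{k})(u_{k}-u_{0})\to 0$ and invoking the strong monotonicity $(A2)$ (here $a(x,\tau)=|\tau|^{N-2}\tau$) gives $\|\nabla u_{k}-\nabla u_{0}\|_{L^{N}}\to 0$, while $\int_{\mathbb{R}^{N}}V|u_{k}|^{N}\to\int_{\mathbb{R}^{N}}V|u_{0}|^{N}$, i.e. $u_{k}\to u_{0}$ in $E$, which is the desired contradiction.

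I expect the main obstacle to be the bookkeeping in the last two steps: first deriving the sharp inequality $\limsup_{k}\|w_{k}\|_{E}^{N}<\left(\frac{N-\beta}{N}\frac{\alpha_{N}}{\alpha_{0}}\right)^{N-1}$ from the (comparatively crude) level gap, and then exploiting the Lions improvement---the extra factor $\bigl(1-\|\nabla u_{0}\|_{L^{N}}^{N}/\lim_{k}\|u_{k}\|_{E}^{N}\bigr)^{-1/(N-1)}>1$, which is precisely where $u_{0}\neq 0$ is used---to keep the exponent $\alpha_{0}\|u_{k}\|_{E}^{N/(N-1)}|w_{k}|^{N/(N-1)}$ of the exponential nonlinearity below the critical level simultaneously near the origin (where the weight $|x|^{-\beta}$ is singular) and near infinity (where the unbounded domain must be traded for radial decay via symmetrization). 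Once the exponential terms are tamed, the remainder is the standard monotonicity-plus-compact-embedding argument already carried out in the proof of Lemma 4.1.
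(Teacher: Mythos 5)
Your overall strategy (argue by contradiction, show the mountain-pass Cerami sequence converges strongly to $u_{0}$, deduce $c_{M}=c_{0}$) matches the paper, and your use of Lemma 3.3 with the strict inequality $\|\nabla w_{0}\|_{N}<1$ as the source of the ``Lions gain'' is on the right track. But two of your intermediate steps are gaps, and one of them is precisely the point the paper goes out of its way to avoid.

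First, you propose to handle the exterior region $|x|\geq R$ ``after Schwarz symmetrization and the Radial Lemma.'' This does not work here, and Remark \ref{rem5.2} in the paper explains why: the Hardy--Littlewood rearrangement inequality controls $\int_{\mathbb{R}^{N}}fg$ but \emph{not} $\int_{|x|>R}f$, i.e. $\int_{|x|>R}f\,dx\leq\int_{|x|>R}f^{*}\,dx$ is false in general. Hence the tail estimate you want cannot be obtained by rearrangement. The paper replaces this with a ``double truncation'': split $\{|x|>R\}$ into $\{|v_{k}|>A\}$ and $\{|v_{k}|\leq A\}$, use $(f2)$--$(f3)$ (giving $F(x,s)\leq c|s|^{N}+cf(x,s)$) together with the uniform bound on $\int |x|^{-\beta}f(x,v_{k})v_{k}\,dx$ on the first piece, and use the polynomial bound $F(x,s)\leq C(\alpha_{0},A)|s|^{N}$ together with the compact embedding $E\hookrightarrow L^{q}$ for $q\geq N$ on the second. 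This truncation argument is the genuinely new ingredient of the proof and is missing from your sketch.

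Second, your route to the subcritical bound $\limsup_{k}\|w_{k}\|_{E}^{N}<\bigl(\tfrac{N-\beta}{N}\tfrac{\alpha_{N}}{\alpha_{0}}\bigr)^{N-1}$ with $w_{k}=u_{k}-u_{0}$ does not close. You invoke a Brezis--Lieb splitting $\int|x|^{-\beta}F(x,u_{k})=\int|x|^{-\beta}F(x,u_{0})+\int|x|^{-\beta}F(x,w_{k})+o(1)$ for the exponential nonlinearity, but establishing such a splitting under $(f1)$ is exactly as hard as the convergence the paper proves by truncation; taking it as given is circular. And even granting it, the combination of the level gap with $F\geq 0$ and $(f2)$ gives inequalities going the wrong way for bounding $\|w_{k}\|_{E}^{N}$ from above. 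The paper sidesteps this entirely: it first proves $\int|x|^{-\beta}F(x,v_{k})\to\int|x|^{-\beta}F(x,u_{0})$ (by double truncation), then works with the \emph{normalized} functions $w_{k}=v_{k}/\|\nabla v_{k}\|_{N}$ and $w_{0}=u_{0}/\lim_{k}\|\nabla v_{k}\|_{N}$, proves $\|\nabla w_{0}\|_{N}<1$ by a separate contradiction, and only then applies Lemma 3.3 with the improved threshold $\bigl(1-\|\nabla w_{0}\|_{N}^{N}\bigr)^{-1/(N-1)}$. If you repair the tail estimate (by truncation rather than symmetrization) and switch to the normalized decomposition, the remainder of your plan (Hölder estimate for $\int|x|^{-\beta}f(x,v_{k})(v_{k}-u_{0})\,dx$, monotonicity of the $N$-Laplacian) matches the paper.
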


\begin{remark}\label{rem5.2}

Before we  give a proof of the proposition, we like to make some remarks.
We note the following Hardy-Littlewood inequality holds for nonegative functions $f$ and $g$ in $\mathbb{R}^N$:
$$\int_{\mathbb{R}^N}f(x)g(x)dx\le \int_{\mathbb{R}^N} f^*(x)g^*(x)dx$$
where $f^*$ and $g^*$ are symmetric and decreasing rearrangement of $f$ and $g$ respectively.
However,
the following inequality:
$$\int_{|x|>R}f(x)dx\le \int_{|x|>R} f^*(x)dx$$
does not hold  for all $R>0$ in general.  Therefore, we will avoid using the symmetrization argument
when we prove \[
\int\limits_{\mathbb{R}^{N}}\frac{F(x,v_{k})}{\left\vert x\right\vert ^{\beta
}}dx\rightarrow\int\limits_{\mathbb{R}^{N}}\frac{F(x,u_{0})}{\left\vert
x\right\vert ^{\beta}}dx.
\]
Nevertheless, this can be taken care by a "double truncation" argument. This argument differs from those given in \cite{Do1, Do, DoMS, Y}.
Using this argument, the compact embedding $E\to L^q(\mathbb{R}^N)$ for $q\ge N$ is sufficient.

 \end{remark}

\begin{proof}
By Propositions 4.1 and 5.1, there exist sequences $\left(  u_{k}\right)  $,
$\left(  v_{k}\right)  $ in $E$ such that
\[
u_{k}\rightarrow u_{0},~J_{\varepsilon}\left(  u_{k}\right)  \rightarrow
c_{0}<0,~DJ_{\varepsilon}\left(  u_{k}\right)  u_{k}\rightarrow0
\]
and
\[
v_{k}\rightharpoonup u_{M},~J_{\varepsilon}\left(  v_{k}\right)  \rightarrow
c_{M}>0,~DJ_{\varepsilon}\left(  v_{k}\right)  v_{k}\rightarrow0,~\nabla
v_{k}(x)\rightarrow\nabla u_{M}(x)\text{ almost everywhere in }\mathbb{R}^{N}%
\]
Now, suppose by contradiction that $u_{0}=u_{M}$. As in the proof of Lemma 4.1
we obtain%
\begin{equation}
\frac{f(x,v_{k})}{\left\vert x\right\vert ^{\beta}}\rightarrow\frac
{f(x,u_{0})}{\left\vert x\right\vert ^{\beta}}\text{ in }L^{1}\left(
B_{R}\right)  \text{ for all }R>0 \label{5.14}%
\end{equation}
Moreover, by $(f2),~(f3)$
\[
\frac{F(x,v_{k})}{\left\vert x\right\vert ^{\beta}}\leq\frac{R_{0}f(x,v_{k}%
)}{\left\vert x\right\vert ^{\beta}}+\frac{M_{0}f(x,v_{k})}{\left\vert
x\right\vert ^{\beta}}%
\]
so by the Generalized Lebesgue's Dominated Convergence Theorem,%
\[
\frac{F(x,v_{k})}{\left\vert x\right\vert ^{\beta}}\rightarrow\frac
{F(x,u_{0})}{\left\vert x\right\vert ^{\beta}}\text{ in }L^{1}\left(
B_{R}\right)  .
\]
We will prove that%
\[
\int\limits_{\mathbb{R}^{N}}\frac{F(x,v_{k})}{\left\vert x\right\vert ^{\beta
}}dx\rightarrow\int\limits_{\mathbb{R}^{N}}\frac{F(x,u_{0})}{\left\vert
x\right\vert ^{\beta}}dx.
\]

It's sufficient to prove that given $\delta>0$, there exists $R>0$ such that
\[
\int\limits_{\left\vert x\right\vert >R}\frac{F(x,v_{k})}{\left\vert
x\right\vert ^{\beta}}dx\leq3\delta\text{ and }\int\limits_{\left\vert
x\right\vert >R}\frac{F(x,u_{0})}{\left\vert x\right\vert ^{\beta}}%
dx\leq3\delta.
\]

To prove it, we recall the following facts from our assumptions on
nonlinearity: there exists $c>0$ such that for all $\left(  x,s\right)
\in\mathbb{R}^{N}\times\mathbb{R}^{+}:$%
\begin{align}
F(x,s)  &  \leq c\left\vert s\right\vert ^{N}+cf(x,s)\label{5.15a}\\
F(x,s)  &  \leq c\left\vert s\right\vert ^{N}+cR\left(  \alpha_{0},s\right)
s\nonumber\\
\int_{\mathbb{R}^{N}}\frac{f(x,v_{k})v_{k}}{\left\vert x\right\vert ^{\beta}%
}dx  &  \leq C,~\int_{\mathbb{R}^{N}}\frac{F(x,v_{k})}{\left\vert x\right\vert
^{\beta}}dx\leq C.\nonumber
\end{align}

First, we will prove it for the case $\beta>0.$

We have that
\begin{align*}
\int\limits_{\substack{\left\vert x\right\vert >R\\\left\vert v_{k}\right\vert
>A}}\frac{F(x,v_{k})}{\left\vert x\right\vert ^{\beta}}dx &  \leq
c\int\limits_{\left\vert x\right\vert >R}\frac{\left\vert v_{k}\right\vert
^{N}}{\left\vert x\right\vert ^{\beta}}dx+c\int\limits_{\substack{\left\vert
x\right\vert >R\\\left\vert v_{k}\right\vert >A}}\frac{f(x,v_{k})}{\left\vert
x\right\vert ^{\beta}}dx\\
&  \leq\frac{c}{R^{\beta}}\left\Vert v_{k}\right\Vert _{E}^{N}+c\frac{1}%
{A}\int_{\mathbb{R}^{N}}\frac{f(x,v_{k})v_{k}}{\left\vert x\right\vert
^{\beta}}dx.
\end{align*}
Since $\left\Vert v_{k}\right\Vert _{E}$ is bounded and using (\ref{5.15a}),
we can first choose $A$ such that%
\[
c\frac{1}{A}\int_{\mathbb{R}^{N}}\frac{f(x,v_{k})v_{k}}{\left\vert
x\right\vert ^{\beta}}dx<\delta\text{~for all }k
\]
and then choose $R$ such that
\[
\frac{c}{R^{\beta}}\left\Vert v_{k}\right\Vert _{E}^{N}<\delta
\]
which thus%
\[
\int\limits_{\substack{\left\vert x\right\vert >R\\\left\vert v_{k}\right\vert
>A}}\frac{F(x,v_{k})}{\left\vert x\right\vert ^{\beta}}dx\leq2\delta.
\]
Now, note that with such $A$, we have for $\left\vert s\right\vert \leq A$:
\begin{align*}
F(x,s) &  \leq c\left\vert s\right\vert ^{N}+cR\left(  \alpha_{0},s\right)
s\\
&  \leq c\left\vert s\right\vert ^{N}+c%
{\displaystyle\sum\limits_{j=N-1}^{\infty}}
\frac{\alpha_{0}^{j}}{j!}\left\vert s\right\vert ^{Nj/(N-1)+1}\\
&  \leq\left\vert s\right\vert ^{N}\left[  c+c%
{\displaystyle\sum\limits_{j=N-1}^{\infty}}
\frac{\alpha_{0}^{j}}{j!}A^{Nj/(N-1)+1-N}\right]  \\
&  \leq C(\alpha_{0},A)\left\vert s\right\vert ^{N}.
\end{align*}
So we get%
\begin{align*}
\int\limits_{\substack{\left\vert x\right\vert >R\\\left\vert v_{k}\right\vert
\leq A}}\frac{F(x,v_{k})}{\left\vert x\right\vert ^{\beta}}dx &  \leq
\frac{C(\alpha_{0},A)}{R^{\beta}}\int\limits_{\substack{\left\vert
x\right\vert >R\\\left\vert v_{k}\right\vert \leq A}}\left\vert v_{k}%
\right\vert ^{N}dx\\
&  \leq\frac{C(\alpha_{0},A)}{R^{\beta}}\left\Vert v_{k}\right\Vert _{E}^{N}.
\end{align*}
Again, note that $\left\Vert v_{k}\right\Vert _{E}$ is bounded, we can choose
$R$ such that
\[
\int\limits_{\substack{\left\vert x\right\vert >R\\\left\vert v_{k}\right\vert
\leq A}}\frac{F(x,v_{k})}{\left\vert x\right\vert ^{\beta}}dx\leq\delta.
\]
In conclusion, we can choose $R>0$ such that
\[
\int\limits_{\left\vert x\right\vert >R}\frac{F(x,v_{k})}{\left\vert
x\right\vert ^{\beta}}dx\leq3\delta.
\]
Similarly, we can choose $R>0$ such that
\[
\int\limits_{\left\vert x\right\vert >R}\frac{F(x,u_{0})}{\left\vert
x\right\vert ^{\beta}}dx\leq3\delta.
\]

Now, if $\beta=0$, similarly, we have
\begin{align*}
\int\limits_{\substack{\left\vert x\right\vert >R\\\left\vert v_{k}\right\vert
>A}}F(x,v_{k})dx &  \leq c\int\limits_{\left\vert x\right\vert >R}\left\vert
v_{k}\right\vert ^{N}dx+c\int\limits_{\substack{\left\vert x\right\vert
>R\\\left\vert v_{k}\right\vert >A}}f(x,v_{k})dx\\
&  \leq\frac{c}{A}\int\limits_{\left\vert x\right\vert >R}\left\vert
v_{k}\right\vert ^{N+1}dx+c\frac{1}{A}\int_{\mathbb{R}^{N}}f(x,v_{k})v_{k}dx\\
&  \leq\frac{c}{A}\left\Vert v_{k}\right\Vert _{E}^{N+1}+c\frac{1}{A}%
\int_{\mathbb{R}^{N}}f(x,v_{k})v_{k}dx
\end{align*}
so since $\left\Vert v_{k}\right\Vert _{E}$ is bounded and by (\ref{5.15a}),
we can choose $A$ such that
\[
\int\limits_{\substack{\left\vert x\right\vert >R\\\left\vert v_{k}\right\vert
>A}}F(x,v_{k})dx\leq2\delta.
\]
Next, we have
\begin{align*}
\int\limits_{\substack{\left\vert x\right\vert >R\\\left\vert v_{k}\right\vert
\leq A}}F(x,v_{k})dx &  \leq C(\alpha_{0},A)\int\limits_{\substack{\left\vert
x\right\vert >R\\\left\vert v_{k}\right\vert \leq A}}\left\vert v_{k}%
\right\vert ^{N}dx\\
&  \leq2^{N-1}C(\alpha_{0},A)\left(  \int\limits_{\substack{\left\vert
x\right\vert >R\\\left\vert v_{k}\right\vert \leq A}}\left\vert v_{k}%
-u_{0}\right\vert ^{N}dx+\int\limits_{\substack{\left\vert x\right\vert
>R\\\left\vert v_{k}\right\vert \leq A}}\left\vert u_{0}\right\vert
^{N}dx\right)  .
\end{align*}
Now, using the compactness of embedding $E\hookrightarrow L^{q}\left(
\mathbb{R}^{N}\right)  ,~q\geq N$ and noticing that $v_{k}\rightharpoonup u_{0}$,
again we can choose $R$ such that
\[
\int\limits_{\substack{\left\vert x\right\vert >R\\\left\vert v_{k}\right\vert
\leq A}}F(x,v_{k})dx\leq\delta.
\]

Combining all the above estimates, we have the fact that
\begin{equation}\label{convergence}
\int\limits_{\mathbb{R}^{N}}\frac{F(x,v_{k})}{\left\vert x\right\vert ^{\beta
}}dx\rightarrow\int\limits_{\mathbb{R}^{N}}\frac{F(x,u_{0})}{\left\vert
x\right\vert ^{\beta}}dx.
\end{equation}
since $\delta$ is arbitrary and (\ref{5.14}) holds.

The remaining argument is similar to that in \cite{DoMS} when
$\beta=0$. For $0\le \beta<N$, we also refer to \cite{LaLu2}. For
completeness, we will include a proof here.

 From the above convergence formula (\ref{convergence}), we
have
\begin{equation}
\underset{k\rightarrow\infty}{\lim}\left\Vert \nabla v_{k}\right\Vert _{N}%
^{N}=Nc_{M}-\underset{k\rightarrow\infty}{\lim}\int_{\mathbb{R}^{N}%
}V(x)\left\vert v_{k}\right\vert ^{N}dx+N\int_{\mathbb{R}^{N}}\frac
{F(x,u_{0})}{\left\vert x\right\vert ^{\beta}}dx+N\varepsilon\int
_{\mathbb{R}^{N}}hu_{0}dx \label{5.16}%
\end{equation}
Now, let
\[
w_{k}=\frac{v_{k}}{\left\Vert \nabla v_{k}\right\Vert _{N}}\text{ and }%
w_{0}=\frac{u_{0}}{\lim_{k\rightarrow\infty}\left\Vert \nabla v_{k}\right\Vert
_{N}}%
\]
we have $\left\Vert \nabla w_{k}\right\Vert _{N}=1$ for all $k$ and
$w_{k}\rightharpoonup w_{0}$ in $D^{1,N}\left(  \mathbb{R}^{N}\right)  $, the
closure of the space $C_{0}^{\infty}\left(  \mathbb{R}^{N}\right)  $ endowed
with the norm $\left\Vert \nabla\varphi\right\Vert _{N}$. In particular,
$\left\Vert \nabla w_{0}\right\Vert _{N}\leq1$ and $w_{k}|_{B_{R}%
}\rightharpoonup w_{0}|_{B_{R}}$ in $W^{1,N}\left(  B_{R}\right)  $ for all
$R>0$. We claim that $\left\Vert \nabla w_{0}\right\Vert _{N}<1$.

Indeed, if $\left\Vert \nabla w_{0}\right\Vert _{N}=1$, then we have
$\lim_{k\rightarrow\infty}\left\Vert \nabla v_{k}\right\Vert _{N}=\left\Vert
\nabla u_{0}\right\Vert _{N}$ and thus $v_{k}\rightarrow u_{0}$ in
$W^{1,N}\left(  \mathbb{R}^{N}\right)  $ since $v_{k}\rightarrow u_{0}$ in
$L^{q}\left(  \mathbb{R}^{N}\right)  ,~q\geq N$. So we can find $g\in
W^{1,N}\left(  \mathbb{R}^{N}\right)  $ (for some $q\geq N$)$~$such that
$\left\vert v_{k}(x)\right\vert \leq g(x)$ almost everywhere in $\mathbb{R}%
^{N}$. From assumption $(f1)$, we have for some $\alpha_{1}>\alpha_{0}$ that
\[
\left\vert f(x,s)s\right\vert \leq b_{1}\left\vert s\right\vert ^{N}+C\left[
\exp\left(  \alpha_{1}\left\vert s\right\vert ^{N/(N-1)}\right)
-S_{N-2}\left(  \alpha_{1},s\right)  \right]
\]
for all $\left(  x,s\right)  \in\mathbb{R}^{N}\mathbb{\times R}$. Thus,
\begin{align*}
\frac{\left\vert f(x,v_{k})v_{k}\right\vert }{\left\vert x\right\vert ^{\beta
}}  &  \leq b_{1}\frac{\left\vert v_{k}\right\vert ^{N}}{\left\vert
x\right\vert ^{\beta}}+C\frac{\left[  \exp\left(  \alpha_{1}\left\vert
v_{k}\right\vert ^{N/(N-1)}\right)  -S_{N-2}\left(  \alpha_{1},v_{k}\right)
\right]  }{\left\vert x\right\vert ^{\beta}}\\
&  \leq b_{1}\frac{\left\vert v_{k}\right\vert ^{N}}{\left\vert x\right\vert
^{\beta}}+C\frac{\left[  \exp\left(  \alpha_{1}\left\vert g\right\vert
^{N/(N-1)}\right)  -S_{N-2}\left(  \alpha_{1},g\right)  \right]  }{\left\vert
x\right\vert ^{\beta}}%
\end{align*}
almost everywhere in $\mathbb{R}^{N}$. Now, by the  Lebesgue's
dominated convergence theorem,
\[
\underset{k\rightarrow\infty}{\lim}\int\limits_{\mathbb{R}^{N}}\frac
{f(x,v_{k})v_{k}}{\left\vert x\right\vert ^{\beta}}dx=\int\limits_{\mathbb{R}%
^{N}}\frac{f(x,u_{0})u_{0}}{\left\vert x\right\vert ^{\beta}}dx
\]
Similarly, since $u_{k}\rightarrow u_{0}$ in $E$, we also have
\[
\underset{k\rightarrow\infty}{\lim}\int\limits_{\mathbb{R}^{N}}\frac
{f(x,u_{k})u_{k}}{\left\vert x\right\vert ^{\beta}}dx=\int\limits_{\mathbb{R}%
^{N}}\frac{f(x,u_{0})u_{0}}{\left\vert x\right\vert ^{\beta}}dx
\]
Now, note that
\[
DJ_{\varepsilon}\left(  u_{k}\right)  u_{k}=\left\Vert u_{k}\right\Vert
_{E}^{N}-\int\limits_{\mathbb{R}^{N}}\frac{f(x,u_{k})u_{k}}{\left\vert
x\right\vert ^{\beta}}dx-\int\limits_{\mathbb{R}^{N}}\varepsilon
hu_{k}dx\rightarrow0
\]
and
\[
DJ_{\varepsilon}\left(  v_{k}\right)  v_{k}=\left\Vert v_{k}\right\Vert
_{E}^{N}-\int\limits_{\mathbb{R}^{N}}\frac{f(x,v_{k})v_{k}}{\left\vert
x\right\vert ^{\beta}}dx-\int\limits_{\mathbb{R}^{N}}\varepsilon
hv_{k}dx\rightarrow0
\]
we conclude that
\[
\underset{k\rightarrow\infty}{\lim}\left\Vert v_{k}\right\Vert _{E}%
^{N}=\underset{k\rightarrow\infty}{\lim}\left\Vert u_{k}\right\Vert _{E}%
^{N}=\left\Vert u_{0}\right\Vert _{E}^{N}%
\]
and thus $J_{\varepsilon}\left(  v_{k}\right)  \rightarrow J_{\varepsilon
}\left(  u_{0}\right)  =c_{0}<0$ and this is a contradiction.

So $\left\Vert \nabla w_{0}\right\Vert _{N}<1$. Using Remark 5.1 we have
\[
c_{M}-J_{\varepsilon}\left(  u_{0}\right)  <\frac{1}{N}\left(  \frac{N-\beta
}{N}\frac{\alpha_{N}}{\alpha_{0}}\right)  ^{N-1}%
\]
and thus
\[
\alpha_{0}<\frac{N-\beta}{N}\frac{\alpha_{N}}{\left[  N\left(  c_{M}%
-J_{\varepsilon}\left(  u_{0}\right)  \right)  \right]  ^{1/(N-1)}}%
\]
Now if we choose $q>1$ sufficiently close to 1 and set
\[
L(w)=c_{M}-\frac{1}{N}\int_{\mathbb{R}^{N}}V(x)\left\vert w\right\vert
^{N}dx+\int_{\mathbb{R}^{N}}\frac{F\left(  x,w\right)  }{\left\vert
x\right\vert ^{\beta}}dx+\varepsilon\int_{\mathbb{R}^{N}}hwdx
\]
then for some $\delta>0$,
\begin{align*}
q\alpha_{0}\left\Vert \nabla v_{k}\right\Vert _{N}^{N/(N-1)}  &  \leq
\frac{N-\beta}{N}\frac{\alpha_{N}\left\Vert \nabla v_{k}\right\Vert
_{N}^{N/(N-1)}}{\left[  N\left(  c_{M}-J_{\varepsilon}\left(  u_{0}\right)
\right)  \right]  ^{1/(N-1)}}-\delta\\
&  =\frac{N-\beta}{N}\frac{\alpha_{N}\left(  NL(v_{k})\right)  ^{1/(N-1)}%
+o_{k}(1)}{\left[  N\left(  c_{M}-J_{\varepsilon}\left(  u_{0}\right)
\right)  \right]  ^{1/(N-1)}}-\delta
\end{align*}
Note that
\[
\underset{k\rightarrow\infty}{\lim}L(v_{k})=c_{M}-\underset{k\rightarrow
\infty}{\lim}\frac{1}{N}\int_{\mathbb{R}^{N}}V(x)\left\vert v_{k}\right\vert
^{N}dx+\int_{\mathbb{R}^{N}}\frac{F\left(  x,u_{0}\right)  }{\left\vert
x\right\vert ^{\beta}}dx+\varepsilon\int_{\mathbb{R}^{N}}hu_{0}dx+o_{k}(1)
\]
and
\begin{align*}
&  \left(  c_{M}-\underset{k\rightarrow\infty}{\lim}\frac{1}{N}\int
_{\mathbb{R}^{N}}V(x)\left\vert v_{k}\right\vert ^{N}dx+\int_{\mathbb{R}^{N}%
}\frac{F\left(  x,u_{0}\right)  }{\left\vert x\right\vert ^{\beta}%
}dx+\varepsilon\int_{\mathbb{R}^{N}}hu_{0}dx\right)  \left(  1-\left\Vert
\nabla_{\mathbb{R}^{N}}w_{0}\right\Vert _{N}^{N}\right) \\
&  \leq c_{M}-J_{\varepsilon}\left(  u_{0}\right)
\end{align*}
so for $k,R$ sufficiently large,
\[
q\alpha_{0}\left\Vert \nabla v_{k}\right\Vert _{N}^{N/(N-1)}\leq\frac{N-\beta
}{N}\frac{\alpha_{N}}{\left[  1-\left\Vert \nabla_{\mathbb{R}^{N}}%
w_{0}\right\Vert _{L^{N}\left(  B_{R}\right)  }^{N}\right]  ^{1/(N-1)}}-\delta
\]
By Lemma 3.3, note that $\nabla w_{k}\rightarrow\nabla w_{0}$ almost
everywhere since $\nabla v_{k}(x)\rightarrow\nabla u_{M}(x)=\nabla u_{0}(x)$
almost everywhere in $\mathbb{R}^{N}$:
\begin{equation}
\int_{B_{R}}\frac{\exp\left(  q\alpha_{0}\left\Vert \nabla v_{k}\right\Vert
_{N}^{N/(N-1)}\left\vert w_{k}\right\vert ^{N/(N-1)}\right)  }{\left\vert
x\right\vert ^{\beta}}dx\leq C \label{5.17}%
\end{equation}

By $(f1)$ and Holder's inequality,
\begin{align*}
&  \left\vert \int_{\mathbb{R}^{N}}\frac{f\left(  x,v_{k}\right)  \left(
v_{k}-u_{0}\right)  }{\left\vert x\right\vert ^{\beta}}dx\right\vert \\
&  \leq b_{1}\int_{\mathbb{R}^{N}}\frac{\left\vert v_{k}\right\vert
^{N-1}\left\vert v_{k}-u_{0}\right\vert }{\left\vert x\right\vert ^{\beta}%
}dx+b_{2}\int_{B_{R}}\frac{\left\vert v_{k}-u_{0}\right\vert \exp\left(
\alpha_{0}\left\vert v_{k}\right\vert ^{N/(N-1)}\right)  }{\left\vert
x\right\vert ^{\beta}}dx\\
&  \leq b_{1}\left(  \int_{\mathbb{R}^{N}}\frac{\left\vert v_{k}\right\vert
^{N}}{\left\vert x\right\vert ^{\beta}}dx\right)  ^{(N-1)/N}\left(
\int_{\mathbb{R}^{N}}\frac{\left\vert v_{k}-u_{0}\right\vert ^{N}}{\left\vert
x\right\vert ^{\beta}}dx\right)  ^{1/N}\\
&  +b_{2}\left(  \int_{\mathbb{R}^{N}}\frac{\left\vert v_{k}-u_{0}\right\vert
^{q^{\prime}}}{\left\vert x\right\vert ^{\beta}}dx\right)  ^{1/q^{\prime}%
}\left(  \int_{B_{R}}\frac{\exp\left(  q\alpha_{0}\left\Vert \nabla
v_{k}\right\Vert _{N}^{N/(N-1)}\left\vert w_{k}\right\vert ^{N/(N-1)}\right)
}{\left\vert x\right\vert ^{\beta}}dx\right)  ^{1/q}%
\end{align*}
where $q^{\prime}=q/(q-1)$. By (\ref{5.17}), we have
\[
\left\vert \int_{\mathbb{R}^{N}}\frac{f\left(  x,v_{k}\right)  \left(
v_{k}-u_{0}\right)  }{\left\vert x\right\vert ^{\beta}}dx\right\vert \leq
C_{1}\left\Vert \frac{v_{k}-u_{0}}{\left\vert x\right\vert ^{\beta/N}%
}\right\Vert _{N}+C_{2}\left\Vert \frac{v_{k}-u_{0}}{\left\vert x\right\vert
^{\beta/q^{\prime}}}\right\Vert _{q^{\prime}}.
\]
Using the Holder inequality and the compact embedding $E\hookrightarrow
L^{q},~q\geq N$, we get%

\begin{align*}
\int_{\mathbb{R}^{N}}\frac{\left\vert v_{k}-u_{0}\right\vert ^{N}}{\left\vert
x\right\vert ^{\beta}}dx  & =\int_{\left\vert x\right\vert <1}\frac{\left\vert
v_{k}-u_{0}\right\vert ^{N}}{\left\vert x\right\vert ^{\beta}}dx+\int
_{\left\vert x\right\vert \geq1}\frac{\left\vert v_{k}-u_{0}\right\vert ^{N}%
}{\left\vert x\right\vert ^{\beta}}dx\\
& \leq\left(  \int_{\left\vert x\right\vert <1}\frac{1}{\left\vert
x\right\vert ^{\beta s}}dx\right)  ^{1/s}\left(  \int_{\left\vert x\right\vert
<1}\left\vert v_{k}-u_{0}\right\vert ^{s^{\prime}N}dx\right)  ^{1/s^{\prime}%
}+\left\Vert v_{k}-u_{0}\right\Vert _{N}^{N}\\
& \rightarrow0\text{ as }k\rightarrow\infty
\end{align*}
for some $s>1$ sufficiently close to 1.
Similarly,
\[
\int_{\mathbb{R}^{N}}\frac{\left\vert v_{k}-u_{0}\right\vert ^{q^{\prime}}%
}{\left\vert x\right\vert ^{\beta}}dx\overset{k\rightarrow\infty}{\rightarrow
}0.
\]
Thus we can conclude that
\[
\int_{\mathbb{R}^{N}}\left\vert \nabla v_{k}\right\vert ^{N-2}\nabla
v_{k}\left(  \nabla v_{k}-\nabla u_{0}\right)  dx+\int_{\mathbb{R}^{N}%
}V(x)\left\vert v_{k}\right\vert ^{N-2}v_{k}\left(  v_{k}-u_{0}\right)
dx\rightarrow0
\]
since $DJ_{\varepsilon}\left(  v_{k}\right)  \left(  v_{k}-u_{0}\right)
\rightarrow0$.

On the other hand, since $v_{k}\rightharpoonup u_{0}$
\[
\int_{\mathbb{R}^{N}}\left\vert \nabla u_{0}\right\vert ^{N-2}\nabla
u_{0}\left(  \nabla v_{k}-\nabla u_{0}\right)  dx\rightarrow0
\]
and
\[
\int_{\mathbb{R}^{N}}V(x)\left\vert u_{0}\right\vert ^{N-2}u_{0}\left(
v_{k}-u_{0}\right)  dx\rightarrow0
\]
we have
\begin{align*}
&  \int_{\mathbb{R}^{N}}\left\vert \nabla v_{k}-\nabla u_{0}\right\vert
^{N}dx+\int_{\mathbb{R}^{N}}V(x)\left\vert v_{k}-u_{0}\right\vert ^{N}\\
&  \leq C_{1}\int_{\mathbb{R}^{N}}\left(  \left\vert \nabla v_{k}\right\vert
^{N-2}\nabla v_{k}-\left\vert \nabla u_{0}\right\vert ^{N-2}\nabla
u_{0}\right)  \left(  \nabla v_{k}-\nabla u_{0}\right)  dx\\
&  +C_{2}\int_{\mathbb{R}^{N}}V(x)\left(  \left\vert v_{k}\right\vert
^{N-2}v_{k}-\left\vert u_{0}\right\vert ^{N-2}u_{0}\right)  \left(
v_{k}-u_{0}\right)  dx
\end{align*}
where we did use the inequality $\left(  \left\vert x\right\vert
^{N-2}x-\left\vert y\right\vert ^{N-2}y\right)  \left(  x-y\right)
\geq2^{2-N}\left\vert x-y\right\vert ^{N}.$ So we can conclude that
$v_{k}\rightarrow u_{0}$ in $E$. Thus $J_{\varepsilon}\left(  v_{k}\right)
\rightarrow J_{\varepsilon}\left(  u_{0}\right)  =c_{0}<0$. Again, this is a
contradiction. The proof is thus complete.
\end{proof}

\section{The existence result to the problem (\ref{1.3})}

In this section, we deal with the problem (\ref{1.3}). The main result of ours
shows that we don't need a nonzero small perturbation in this case to
guarantee the existence of a solution.

\subsection{Proof of Theorem 2.3}

It's similar to the proof of Theorems 2.1 and 2.2. We can find a sequence
$\left(  v_{k}\right)  $ in $E$ such that
\[
J\left(  v_{k}\right)  \rightarrow c_{M}>0\text{ and }\left(  1+\left\Vert
v_{k}\right\Vert _{E}\right)  \left\Vert DJ\left(  v_{k}\right)  \right\Vert
\rightarrow0
\]
where $c_{M}$ is the mountain-pass level of $J$. Now, by Lemma 4.1, the
sequence $\left(  v_{k}\right)  $ converges weakly to a weak solution $v$ of
(\ref{1.3}) in $E$. Now, suppose that $v=0$. Similarly as in the proof of
Proposition 5.2, we have that:
\begin{equation}
\int_{\mathbb{R}^{N}}\frac{F\left(  x,v_{k}\right)  }{\left\vert x\right\vert
^{\beta}}\rightarrow0\label{m1}%
\end{equation}
So
\[
\underset{k\rightarrow\infty}{\lim}\left\Vert v_{k}\right\Vert _{E}%
^{N}=\underset{k\rightarrow\infty}{\lim}\left(  NJ\left(  v_{k}\right)
+N\int\limits_{\mathbb{R}^{N}}\frac{F(x,v_{k})}{\left\vert x\right\vert
^{\beta}}dx\right)  =NC_{M}%
\]
Note that by Lemma 3.6, we have $0<C_{M}<\frac{1}{N}\left(  \frac{N-\beta}%
{N}\frac{\alpha_{N}}{\alpha_{0}}\right)  ^{N-1}$, so
\[
\underset{k\rightarrow\infty}{\lim\sup}\left\Vert v_{k}\right\Vert
_{E}<\left(  \frac{N-\beta}{N}\frac{\alpha_{N}}{\alpha_{0}}\right)  ^{\left(
N-1\right)  /N}.
\]
Thus by $(f1)$, we have
\[
\frac{f(x,v_{k})v_{k}}{\left\vert x\right\vert ^{\beta}}\leq b_{1}\frac
{v_{k}^{N}}{\left\vert x\right\vert ^{\beta}}+b_{2}\frac{R\left(  \alpha
_{0},v_{k}\right)  v_{k}}{\left\vert x\right\vert ^{\beta}}%
\]
Note that%
\[
b_{1}\int\limits_{\mathbb{R}^{N}}\frac{v_{k}^{N}}{\left\vert x\right\vert
^{\beta}}+b_{2}\int\limits_{\mathbb{R}^{N}}\frac{R\left(  \alpha_{0}%
,v_{k}\right)  v_{k}}{\left\vert x\right\vert ^{\beta}}\rightarrow0
\]
since by Lemma 3.2 and by the compact embedding $E\hookrightarrow L^{s}\left(
\mathbb{R}^{N}\right)  ,~s\geq N$, $\int\limits_{\mathbb{R}^{N}}\frac{R\left(
\alpha_{0},v_{k}\right)  v_{k}}{\left\vert x\right\vert ^{\beta}}\leq C\left(
M,N\right)  \left\Vert v_{k}\right\Vert _{s}\rightarrow0.$ Moreover,
$\int\limits_{\left\vert x\right\vert \geq1}\frac{v_{k}^{N}}{\left\vert
x\right\vert ^{\beta}}\leq\left\Vert v_{k}\right\Vert _{N}^{N}\rightarrow0$
again by the compact embedding $E\hookrightarrow L^{N}\left(  \mathbb{R}%
^{N}\right)  $ and $\int\limits_{\left\vert x\right\vert \leq1}\frac{v_{k}%
^{N}}{\left\vert x\right\vert ^{\beta}}\leq C\left\Vert v_{k}\right\Vert
_{Nr}^{N}\rightarrow0$ by Holder's inequality and by the compact embedding
$E\hookrightarrow L^{s}\left(  \mathbb{R}^{N}\right)  ,~s\geq N$. So we can
conclude that
\[
\int\limits_{\mathbb{R}^{N}}\frac{f(x,v_{k})v_{k}}{\left\vert x\right\vert
^{\beta}}dx\rightarrow0
\]
which thus $\underset{k\rightarrow\infty}{\lim}\left\Vert v_{k}\right\Vert
_{E}^{N}=\underset{k\rightarrow\infty}{\lim}\int\limits_{\mathbb{R}^{N}}%
\frac{f(x,v_{k})v_{k}}{\left\vert x\right\vert ^{\beta}}dx=0$ and it's
impossible. So we get the nontriviality of the solution.

\section{Existence and Multiplicity Without the Ambrosetti-Rabinowitz
condition}

The main purpose of this section is to prove that all of the results of
existence and multiplicity in Sections 5 and 6 hold even when the nonlinear
term $f$ does not satisfy the Ambrosetti-Rabinowitz condition. It is not
difficult to see that there are many interesting examples of such $f$ which do
not satisfy the Ambrosetti-Rabinowitz condition, but satisfy our weaker
conditions listed below.

In this section, instead of conditions $(f2)$ and $(f3)$, we assume that

$(f2^{\prime})$ $H(x,t)\leq H(x,s)$ for all $0<t<s$, $\forall x\in
\mathbb{R}^{N}$ where $H(x,u)=uf(x,u)-NF(x,u)$.

$(f3^{\prime})$ There exists $c>0$ such that for all $\left(  x,s\right)
\in\mathbb{R}^{N}\times\mathbb{R}^{+}:F(x,s)\leq c\left\vert s\right\vert
^{N}+cf(x,s)$.

$(f4^{\prime})$ $\underset{u\rightarrow\infty}{\lim}\frac{F(x,u)}{\left\vert
u\right\vert ^{N}}=\infty$ uniformly on \ $x\in$ $\mathbb{R}^{N}$.

We should stress that $\left(  f1\right)  +(f3)$ will imply $(f3^{\prime})$.

The key to establish the results in earlier sections is to prove that the
Cerami sequence \cite{Ce1, Ce2} associated to the Lagrange-Euler functional is
bounded. Once we will have proved this, the remaining should be the same as in
previous sections. Therefore, we only include the proof of this essential
ingredient in this section.

\begin{lemma}
Let $\left\{  u_{k}\right\}  $ be an arbitrary Cerami sequence associated to
the functional
\[
I(u)=\frac{1}{N}\left\Vert u\right\Vert ^{N}-\int\limits_{\mathbb{R}^{N}}%
\frac{F(x,u)}{\left\vert x\right\vert ^{\beta}}dx
\]
such that
\begin{align*}
\frac{1}{N}\left\Vert u_{k}\right\Vert ^{N}-\int\limits_{\mathbb{R}^{N}}%
\frac{F(x,u_{k})}{\left\vert x\right\vert ^{\beta}}dx  &  \rightarrow C_{M}\\
\left(  1+\left\Vert u_{k}\right\Vert \right)  \left\vert \int
\limits_{\mathbb{R}^{N}}\left\vert \nabla u_{k}\right\vert ^{N-1}\nabla
u_{k}\nabla vdx+\int\limits_{\mathbb{R}^{N}}V(x)\left\vert u_{k}\right\vert
^{N-1}u_{k}vdx-\int\limits_{\mathbb{R}^{N}}\frac{f(x,u_{k})v}{\left\vert
x\right\vert ^{\beta}}dx\right\vert  &  \leq\varepsilon_{k}\left\Vert
v\right\Vert \\
\varepsilon_{k}  &  \rightarrow0\text{.}%
\end{align*}
where $C_{M}\in\left(  0,\frac{1}{N}\left(  \left(  1-\frac{\beta}{N}\right)
\frac{\alpha_{N}}{\alpha_{0}}\right)  ^{N-1}\right)  $. Then $\left\{
u_{k}\right\}  $ is bounded up to a subsequence.
\end{lemma}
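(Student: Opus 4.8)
The plan is a contradiction argument in the spirit of \cite{DoMS, LaLu}: we may assume $\|u_k\|\to\infty$ (otherwise a subsequence of $\{u_k\}$ is already bounded and there is nothing to prove). Put $v_k=u_k/\|u_k\|$, so $\|v_k\|=1$ for all $k$. By reflexivity and the compact embedding $E\hookrightarrow L^q(\mathbb{R}^N)$ for every $q\ge N$ (valid under $(V2)$ or $(V3)$), after passing to a further subsequence we may assume $v_k\rightharpoonup v$ in $E$, $v_k\to v$ in $L^q(\mathbb{R}^N)$ for every $q\ge N$, and $v_k\to v$ a.e. Two identities follow at once from the Cerami conditions: testing the derivative relation with $v=u_k$ gives
\[
\int_{\mathbb{R}^N}\frac{f(x,u_k)u_k}{|x|^\beta}\,dx=\|u_k\|^N+o(1),
\]
while the level relation gives $N\int_{\mathbb{R}^N}|x|^{-\beta}F(x,u_k)\,dx=\|u_k\|^N-NC_M+o(1)$; subtracting,
\[
\int_{\mathbb{R}^N}\frac{H(x,u_k)}{|x|^\beta}\,dx=NC_M+o(1),\qquad H(x,s)=sf(x,s)-NF(x,s).
\]

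I first rule out $v\not\equiv0$. On $\{v\ne0\}$, a set of positive measure, one has $|u_k(x)|=\|u_k\|\,|v_k(x)|\to\infty$ a.e., so using $F\ge0$, the identity $\frac{F(x,u_k)}{|x|^\beta}=\|u_k\|^N\frac{F(x,u_k)}{|u_k|^N}\frac{|v_k|^N}{|x|^\beta}$, hypothesis $(f4')$, and Fatou's lemma,
\[
\liminf_{k\to\infty}\frac{1}{\|u_k\|^N}\int_{\mathbb{R}^N}\frac{F(x,u_k)}{|x|^\beta}\,dx\ \ge\ \liminf_{k\to\infty}\int_{\{v\ne0\}}\frac{F(x,u_k)}{|u_k|^N}\,\frac{|v_k|^N}{|x|^\beta}\,dx=+\infty .
\]
This contradicts $\frac{1}{\|u_k\|^N}\int_{\mathbb{R}^N}|x|^{-\beta}F(x,u_k)\,dx=\tfrac1N-\tfrac{C_M}{\|u_k\|^N}+o(1)\to\tfrac1N$. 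Hence $v\equiv0$, so in particular $v_k\to0$ in $L^q(\mathbb{R}^N)$ for all $q\ge N$.

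Next I exploit the gap in the Cerami level. Since $C_M<\tfrac1N\big((1-\tfrac\beta N)\tfrac{\alpha_N}{\alpha_0}\big)^{N-1}$, the interval $\big((NC_M)^{1/N},\,\big((1-\tfrac\beta N)\tfrac{\alpha_N}{\alpha_0}\big)^{(N-1)/N}\big)$ is nonempty; fix $R$ in it. For $k$ large, $R/\|u_k\|\in(0,1)$; let $t_k\in[0,1]$ maximize $t\mapsto I(tu_k)$ on $[0,1]$. Because $\|Rv_k\|=R$ lies strictly below the singular Moser--Trudinger threshold, the now-standard estimates (the singular Moser--Trudinger inequality from the Introduction together with Lemmas~\ref{lem3.1} and~\ref{lem3.2}, after replacing $\alpha_0$ by some $\alpha_1>\alpha_0$ arbitrarily close to it in the $(f1)$ bound for $F$), combined with $v_k\to0$ in $L^q(\mathbb{R}^N)$ for $q\ge N$, yield $\int_{\mathbb{R}^N}|x|^{-\beta}F(x,Rv_k)\,dx\to0$. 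Therefore
\[
I(t_ku_k)\ \ge\ I\!\left(\tfrac{R}{\|u_k\|}u_k\right)=\frac{R^N}{N}-\int_{\mathbb{R}^N}\frac{F(x,Rv_k)}{|x|^\beta}\,dx\ \longrightarrow\ \frac{R^N}{N}\ >\ C_M .
\]
In particular $I(t_ku_k)>0$ for $k$ large, so $t_k\ne0$; and $t_k=1$ would force $I(t_ku_k)=I(u_k)\to C_M<R^N/N$, impossible for $k$ large. Hence $t_k\in(0,1)$ for all large $k$, so $\left.\tfrac{d}{dt}I(tu_k)\right|_{t=t_k}=0$, which gives $t_k^N\|u_k\|^N=\int_{\mathbb{R}^N}|x|^{-\beta}f(x,t_ku_k)(t_ku_k)\,dx$ and hence $NI(t_ku_k)=\int_{\mathbb{R}^N}|x|^{-\beta}H(x,t_ku_k)\,dx$. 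Where $u_k(x)>0$ we have $0<t_ku_k(x)<u_k(x)$, and where $u_k(x)\le0$ both $H(x,t_ku_k)$ and $H(x,u_k)$ vanish; so by the monotonicity hypothesis $(f2')$, $H(x,t_ku_k)\le H(x,u_k)$ pointwise, whence
\[
NI(t_ku_k)\le\int_{\mathbb{R}^N}\frac{H(x,u_k)}{|x|^\beta}\,dx=NC_M+o(1),
\]
i.e. $\limsup_{k\to\infty}I(t_ku_k)\le C_M<R^N/N$, contradicting $I(t_ku_k)\to R^N/N$. Hence $\|u_k\|\to\infty$ is impossible, and $\{u_k\}$ is bounded up to a subsequence.

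The step I expect to be the main obstacle is the bookkeeping in the last paragraph: one must pick $R$ large enough to beat the level, $(NC_M)^{1/N}<R$, yet small enough that $\|Rv_k\|=R$ stays below the singular Moser--Trudinger threshold $\big((1-\tfrac\beta N)\alpha_N/\alpha_0\big)^{(N-1)/N}$ so that $\int_{\mathbb{R}^N}|x|^{-\beta}F(x,Rv_k)\,dx\to0$; it is precisely the strict bound $C_M<\tfrac1N\big((1-\tfrac\beta N)\alpha_N/\alpha_0\big)^{N-1}$ that makes the admissible range of $R$ nonempty, and one also has to absorb the slightly larger exponent $\alpha_1$ in the $(f1)$ estimate for $F$ by taking it close to $\alpha_0$. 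The secondary point is showing the maximizer $t_k$ is interior to $(0,1)$, which is where the lower bound $R^N/N$ is first pitted against $C_M$; once $t_k\in(0,1)$, the criticality identity makes the weak monotonicity $(f2')$ usable in place of the Ambrosetti--Rabinowitz condition, and everything else goes through as in Sections 5 and 6.
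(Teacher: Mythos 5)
Your proof follows essentially the same strategy as the paper's: assume $\|u_k\|\to\infty$, normalize $v_k=u_k/\|u_k\|$, use $(f4')$ and Fatou to kill the weak limit of the positive part, then maximize $t\mapsto I(tu_k)$ on $[0,1]$ and derive a contradiction from the criticality identity at $t_k$ together with the monotonicity hypothesis $(f2')$. Two points where your write-up deviates from the paper deserve comment.

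First, a small slip in the Fatou step: the argument only yields $v\le 0$ a.e.\ (equivalently $v^+=0$), not $v\equiv 0$. By the extension (2.1), $F(x,u)=0$ for $u\le 0$, so on $\{v<0\}$ the integrand $\frac{F(x,u_k)}{|u_k|^N}\frac{|v_k|^N}{|x|^\beta}$ tends to $0$, not $+\infty$; only $\{v>0\}$ contributes a divergence. The paper is careful to work with $v_k^+$ and concludes $v^+=0$. This does not affect your subsequent argument, since what you actually use is $v_k^+\to 0$ in $L^q$ and $F(x,Rv_k)=F(x,Rv_k^+)$, but the statement ``$v\equiv 0$'' as written is not justified.

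Second, you fix a single $R\in\bigl((NC_M)^{1/N},\ \bigl((1-\tfrac\beta N)\tfrac{\alpha_N}{\alpha_0}\bigr)^{(N-1)/N}\bigr)$ and assert $\int_{\mathbb{R}^N}|x|^{-\beta}F(x,Rv_k)\,dx\to 0$, whereas the paper instead derives the pointwise bound (8.10) with a small coefficient $\varepsilon(R)=\tfrac{(1-\beta/N)\alpha_N}{R^{N/(N-1)}}-\alpha_0$ in front of the Moser--Trudinger term, takes $k\to\infty$ keeping that term merely \emph{bounded}, and then sends $R$ to the endpoint to make the coefficient vanish. Your claim is true, but it is not a direct corollary of Lemmas~\ref{lem3.1}--\ref{lem3.2}: one needs an extra truncation (for $\gamma=\alpha_1R^{N/(N-1)}<\gamma'\le(1-\tfrac\beta N)\alpha_N$, split $R(\gamma,v_k^+)\le C_A|v_k^+|^N+\delta R(\gamma',v_k^+)$ with $A=A(\delta)$, use the compact embedding on the first piece and Lemma 1.1 on the second, then let $\delta\to 0$). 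Once that detail is supplied, your fixed-$R$ route is a clean alternative to the paper's two-stage limit $k\to\infty$, then $R\to$ endpoint; both exploit the same strict gap $C_M<\tfrac1N\bigl((1-\tfrac\beta N)\tfrac{\alpha_N}{\alpha_0}\bigr)^{N-1}$ and both conclude identically from $(f2')$.
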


\begin{proof}
Suppose that
\begin{equation}
\left\Vert u_{k}\right\Vert \rightarrow\infty\label{8.6}%
\end{equation}
Setting
\[
v_{k}=\frac{u_{k}}{\left\Vert u_{k}\right\Vert }%
\]
then $\left\Vert v_{k}\right\Vert =1$. We can then suppose that $v_{k}%
\rightharpoonup v$ in $E$ (up to a subsequence) . We may similarly show that
$v_{k}^{+}\rightharpoonup v^{+}$ in $E$, where $w^{+}=\max\left\{
w,0\right\}  .$ Thanks to the assumptions on the potential $V$, the embedding
$E\hookrightarrow L^{q}\left(  \mathbb{R}^{N}\right)  $ is compact for all
$q\geq N$. So, we can assume that $\left\{
\begin{array}
[c]{l}%
v_{k}^{+}(x)\rightarrow v^{+}(x)\text{ a.e. in }\mathbb{R}^{N}\\
v_{k}^{+}\rightarrow v^{+}\text{ in }L^{q}\left(  \mathbb{R}^{N}\right)
,~\forall q\geq N
\end{array}
\right.  .$ We wish to show that $v^{+}=0$ a.e. $\mathbb{R}^{N}.$ Indeed, if
$S^{+}=\left\{  x\in%
\mathbb{R}
^{N}:v^{+}\left(  x\right)  >0\right\}  $ has a positive measure$,$ then in
$S^{+}$, we have
\[
\underset{k\rightarrow\infty}{\lim}u_{k}^{+}(x)=\underset{k\rightarrow\infty
}{\lim}v_{k}^{+}(x)\left\Vert u_{k}\right\Vert =+\infty
\]
and thus by $(f4^{\prime}):$
\[
\underset{k\rightarrow\infty}{\lim}\frac{F\left(  x,u_{k}^{+}(x)\right)
}{\left\vert x\right\vert ^{\beta}\left\vert u_{k}^{+}(x)\right\vert ^{N}%
}=+\infty\text{ a.e. in }S^{+}%
\]
This means that
\begin{equation}
\underset{n\rightarrow\infty}{\lim}\frac{F\left(  x,u_{k}^{+}(x)\right)
}{\left\vert x\right\vert ^{\beta}\left\vert u_{k}^{+}(x)\right\vert ^{N}%
}\left\vert v_{k}^{+}(x)\right\vert ^{N}=+\infty\text{ a.e. in }S^{+}
\label{8.7}%
\end{equation}
and so
\begin{equation}
\int_{\mathbb{R}^{N}}\underset{k\rightarrow\infty}{\lim\inf}\frac{F\left(
x,u_{k}^{+}(x)\right)  }{\left\vert x\right\vert ^{\beta}\left\vert u_{k}%
^{+}(x)\right\vert ^{N}}\left\vert v_{k}^{+}(x)\right\vert ^{N}dx=+\infty
\label{8.8}%
\end{equation}
However, since $\left\{  u_{k}\right\}  $ is the arbitrary Cerami sequence at
level $C_{M}$, we see that
\[
\left\Vert u_{k}\right\Vert ^{N}=NC_{M}+N\int_{%
\mathbb{R}
^{N}}\frac{F(x,u_{k}^{+}\left(  x\right)  )}{\left\vert x\right\vert ^{\beta}%
}dx+o(1)
\]
which implies that
\[
\int_{%
\mathbb{R}
^{N}}\frac{F(x,u_{k}^{+}\left(  x\right)  )}{\left\vert x\right\vert ^{\beta}%
}dx\rightarrow+\infty
\]
and then
\begin{align}
&  \underset{k\rightarrow\infty}{\lim\inf}\int_{%
\mathbb{R}
^{N}}\frac{F\left(  x,u_{k}^{+}(x)\right)  }{\left\vert x\right\vert ^{\beta
}\left\vert u_{k}^{+}(x)\right\vert ^{N}}\left\vert v_{k}^{+}(x)\right\vert
^{N}dx\label{8.9}\\
&  \underset{k\rightarrow\infty}{=\lim\inf}\int_{%
\mathbb{R}
^{N}}\frac{F\left(  x,u_{k}^{+}(x)\right)  }{\left\vert x\right\vert ^{\beta
}\left\Vert u_{k}\right\Vert ^{N}}dx\nonumber\\
&  =\underset{k\rightarrow\infty}{\lim\inf}\frac{\int_{%
\mathbb{R}
^{N}}\frac{F\left(  x,u_{k}^{+}(x)\right)  }{\left\vert x\right\vert ^{\beta}%
}dx}{NC_{M}+N\int_{%
\mathbb{R}
^{N}}\frac{F(x,u_{k}^{+}\left(  x\right)  )}{\left\vert x\right\vert ^{\beta}%
}dx+o(1)}\nonumber\\
&  =\frac{1}{N}\nonumber
\end{align}
Now, note that $F(x,s)\geq0$, by Fatou's lemma and (\ref{8.8}) and
(\ref{8.9}), we get a contradiction. So $v\leq0$ a.e. which means that
$v_{k}^{+}\rightharpoonup0$ in $E.$

Letting $t_{k}\in\left[  0,1\right]  $ such that
\[
I\left(  t_{k}u_{k}\right)  =\underset{t\in\left[  0,1\right]  }{\max}I\left(
tu_{k}\right)
\]
For any given $R\in\left(  0,\left(  \frac{\left(  1-\frac{\beta}{N}\right)
\alpha_{N}}{\alpha_{0}}\right)  ^{\frac{N-1}{N}}\right)  $, let $\varepsilon
=\frac{\left(  1-\frac{\beta}{N}\right)  \alpha_{N}}{R^{N/(N-1)}}-$
$\alpha_{0}>0$, since $f$ has critical growth $(f1)$ on $%
\mathbb{R}
^{N}$, there exists $C=C(R)>0$ such that
\begin{equation}
F(x,s)\leq C\left\vert s\right\vert ^{N}+\left\vert \frac{\left(
1-\frac{\beta}{N}\right)  \alpha_{N}}{R^{N/(N-1)}}-\alpha_{0}\right\vert
R\left(  \alpha_{0}+\varepsilon,s\right)  ,~\forall\left(  x,s\right)  \in%
\mathbb{R}
^{N}\times%
\mathbb{R}
. \label{8.10}%
\end{equation}
Since $\left\Vert u_{k}\right\Vert \rightarrow\infty$, we have
\begin{equation}
I\left(  t_{k}u_{k}\right)  \geq I\left(  \frac{R}{\left\Vert u_{k}\right\Vert
}u_{k}\right)  =I\left(  Rv_{k}\right)  \label{8.11}%
\end{equation}
and by (\ref{8.10}), $\left\Vert v_{k}\right\Vert =1$ and the fact that
$\int_{%
\mathbb{R}
^{N}}\frac{F\left(  x,v_{k}\right)  }{\left\vert x\right\vert ^{\beta}}%
dx=\int_{%
\mathbb{R}
^{N}}\frac{F\left(  x,v_{k}^{+}\right)  }{\left\vert x\right\vert ^{\beta}}%
dx$, we get
\begin{align}
&  NI\left(  Rv_{k}\right) \label{8.12}\\
&  \geq R^{N}-NCR^{N}\int_{%
\mathbb{R}
^{N}}\frac{\left\vert v_{k}^{+}\right\vert ^{N}}{\left\vert x\right\vert
^{\beta}}dx-N\left\vert \frac{\left(  1-\frac{\beta}{N}\right)  \alpha_{N}%
}{R^{\frac{N}{N-1}}}-\alpha_{0}\right\vert \int_{%
\mathbb{R}
^{N}}\frac{R\left(  \alpha_{0}+\varepsilon,R\left\vert v_{k}^{+}\right\vert
\right)  }{\left\vert x\right\vert ^{\beta}}dx\nonumber\\
&  \geq R^{N}-NCR^{N}\int_{%
\mathbb{R}
^{N}}\frac{\left\vert v_{k}^{+}\right\vert ^{N}}{\left\vert x\right\vert
^{\beta}}dx-N\left\vert \frac{\left(  1-\frac{\beta}{N}\right)  \alpha_{N}%
}{R^{\frac{N}{N-1}}}-\alpha_{0}\right\vert \int_{%
\mathbb{R}
^{N}}\frac{R\left(  \left(  \alpha_{0}+\varepsilon\right)  R^{\frac{N}{N-1}%
},\left\vert v_{k}^{+}\right\vert \right)  }{\left\vert x\right\vert ^{\beta}%
}dx\nonumber\\
&  \geq R^{N}-NCR^{N}\int_{%
\mathbb{R}
^{N}}\frac{\left\vert v_{k}^{+}\right\vert ^{N}}{\left\vert x\right\vert
^{\beta}}dx-N\left\vert \frac{\left(  1-\frac{\beta}{N}\right)  \alpha_{N}%
}{R^{\frac{N}{N-1}}}-\alpha_{0}\right\vert \int_{%
\mathbb{R}
^{N}}\frac{R\left(  \left(  1-\frac{\beta}{N}\right)  \alpha_{N},\left\vert
v_{k}\right\vert \right)  }{\left\vert x\right\vert ^{\beta}}dx\nonumber
\end{align}
Since $v_{k}^{+}\rightharpoonup0$ in $E$ and the embedding $E\hookrightarrow
L^{p}\left(
\mathbb{R}
^{N}\right)  $ is compact for all $p\geq N$, using the Holder inequality, we
can show easily that $\int_{\mathbb{R}^{N}}\frac{\left\vert v_{k}%
^{+}(x)\right\vert ^{N}}{\left\vert x\right\vert ^{\beta}}dx\overset
{k\rightarrow\infty}{\rightarrow}0$. Also, by Lemma 1.1, $\int_{%
\mathbb{R}
^{N}}\frac{R\left(  \left(  1-\frac{\beta}{N}\right)  \alpha_{N},\left\vert
v_{k}(x)\right\vert \right)  }{\left\vert x\right\vert ^{\beta}}dx$ is bounded
by a universal $C$.

Thus using (\ref{8.11}) and letting $k\rightarrow\infty$ in (\ref{8.12}), and
then letting $R\rightarrow\left[  \left(  \frac{\left(  1-\frac{\beta}%
{N}\right)  \alpha_{N}}{\alpha_{0}}\right)  ^{\frac{N-1}{N}}\right]  ^{-}$, we
get
\begin{equation}
\underset{k\rightarrow\infty}{\lim\inf}I\left(  t_{k}u_{k}\right)  \geq
\frac{1}{N}\left(  \left(  1-\frac{\beta}{N}\right)  \frac{\alpha_{N}}%
{\alpha_{0}}\right)  ^{N-1}>C_{M} \label{8.13}%
\end{equation}
Note that $I(0)=0$ and $I(u_{k})\rightarrow C_{M}$, we can suppose that
$t_{k}\in\left(  0,1\right)  $. Thus since $DI(t_{k}u_{k})t_{k}u_{k}=0,$%
\[
t_{k}^{N}\left\Vert u_{k}\right\Vert ^{N}=\int_{%
\mathbb{R}
^{N}}\frac{f\left(  x,t_{k}u_{k}\right)  t_{k}u_{k}}{\left\vert x\right\vert
^{\beta}}dx
\]
By $(f2^{\prime}):$
\begin{align*}
NI\left(  t_{k}u_{k}\right)   &  =t_{k}^{N}\left\Vert u_{k}\right\Vert
^{N}-N\int_{%
\mathbb{R}
^{N}}\frac{F\left(  x,t_{k}u_{k}\right)  }{\left\vert x\right\vert ^{\beta}%
}dx\\
&  =\int_{%
\mathbb{R}
^{N}}\frac{\left[  f\left(  x,t_{k}u_{k}\right)  t_{k}u_{k}-NF\left(
x,t_{k}u_{k}\right)  \right]  }{\left\vert x\right\vert ^{\beta}}dx\\
&  \leq\int_{%
\mathbb{R}
^{N}}\frac{\left[  f\left(  x,u_{k}\right)  u_{k}-NF\left(  x,u_{k}\right)
\right]  }{\left\vert x\right\vert ^{\beta}}dx.
\end{align*}
Moreover, we have%
\begin{align*}
\int_{%
\mathbb{R}
^{N}}\frac{\left[  f\left(  x,u_{k}\right)  u_{k}-NF\left(  x,u_{k}\right)
\right]  }{\left\vert x\right\vert ^{\beta}}dx  &  =\left\Vert u_{k}%
\right\Vert ^{N}+NC_{M}-\left\Vert u_{k}\right\Vert ^{N}+o(1)\\
&  =NC_{M}+o(1)
\end{align*}
which is a contraction to (\ref{8.13}). This proves that $\left\{
u_{k}\right\}  $ is bounded in $E$.
\end{proof}

\end{document}